\newtheorem{theorem}{Theorem}
\newtheorem{proposition}[theorem]{Proposition}
\newtheorem{lemma}[theorem]{Lemma}
\newtheorem{corollary}[theorem]{Corollary}
\theoremstyle{definition}
\newtheorem{example}[theorem]{Example}
\newtheorem*{rep@theorem}{\rep@title}
\newcommand{\newreptheorem}[2]{%
\newenvironment{rep#1}[1]{%
 \def\rep@title{#2 \ref{##1}}%
 \begin{rep@theorem}}%
 {\end{rep@theorem}}}
\definecolor{darkblue}{rgb}{0,0,0.7} 
\newcommand{\darkblue}{\color{darkblue}} 
\newcommand{\Dfn}[1]{\emph{\darkblue #1}} 
\DeclareMathOperator{\Ref}{ref}
\DeclareMathOperator{\ran}{ran}
\DeclareMathOperator{\Span}{span}
\DeclareMathOperator{\ideal}{ideal}
\title{The Zonotopal Algebra of the Broken Wheel Graph and its Generalization}
\author[S.B.~Brodsky]{Sarah~B.~Brodsky$^{\otimes}$}
\address[S.B.~Brodsky]{Department of Mathematics, Technische Universität Berlin, 10623 Berlin, Deutschland}
\email{brodsky@math.tu-berlin.de}
\thanks{$^{\otimes}$With the support of the European Research Council grant SHPEF awarded to
Olga Holtz and the Berlin Mathematical School}
\begin{document}

\maketitle

\begin{abstract}
The machinery of zonotopal algebra is linked with two particular polytopes: the Stanley-Pitman polytope and the regular simplex $\mathfrak{Sim}_n(t_1,...,t_n)$ with parameters $t_1,...,t_n\in \mathbb{R}_+^n$, defined by the inequalities $\sum_{i=1}^n r_i\leq \sum_{i=1}^n t_i, \mbox{ } r_i\in \mathbb{R}_+^n,$ where the $(r_i)_{i\in [n]}$ are variables. 

Specifically, we will discuss the central Dahmen-Micchelli space of the broken wheel graph $BW_n$ and its dual, the $\mathcal{P}$-central space. We will observe that the $\mathcal{P}$-central space of $BW_n$ is monomial, with a basis given by the $BW_n$-parking functions. We will show that the volume polynomial of the the Stanley-Pitman polytope lies in the central Dahmen-Micchelli space of $BW_n$ and is precisely the polynomial in a particular basis of the central Dahmen-Micchelli space which corresponds to the monomial $t_1t_2\cdots t_n$ in the dual monomial basis of the $\mathcal{P}$-central space. 

We will then define the generalized broken wheel graph $GBW_n(T)$ for a given rooted tree $T$ on $n$ vertices. For every such tree, we can construct $2^{n-1}$ directed graphs, which we will refer to as \textit{generalized broken wheel graphs}. Each generalized broken wheel graph constructed from $T$ will give us a polytope, its volume polynomial, and a \textit{reference monomial}. The $2^{n-1}$ polytopes together give a polyhedral subdivision of $\mathfrak{Sim}_n(t_1,...,t_n)$, their volume polynomials together give a basis for the subspace of homogeneous polynomials of degree $n$ of the corresponding central Dahmen-Micchelli space, and their reference monomials together give a basis for its dual. 
\end{abstract}

\tableofcontents

\section{Introduction}

The theory of zonotopal algebras introduced by Holtz and Ron \cite{HR} gives a means of associating some of the most fundamental objects in combinatorics to solution sets of differential equations. Starting with a \textit{box-spline}, the central Dahmen-Micchelli space can be constructed: a space of polynomials which satisfies the same differential equations as the polynomials locally describing the starting box-spline. The central Dahmen-Micchelli space is the Macaulay inverse system of an ideal generated by powers of linear forms; these linear forms are indexed by the cocircuits of the matroid whose ground set consists of the vectors defining the underlying zonotope of the starting box-spline. Holtz and Ron \cite{HR} also define a dual space to the central Dahmen-Micchelli space, the $\mathcal{P}$-central space, which has the same Hilbert polynomial as the central Dahmen-Micchelli space and can be associated to a hyperplane arrangement derived from a power ideal in which the $\mathcal{P}$-central space is the Macaulay inverse system of. There is also the internal and external Dahmen-Micchelli spaces and their duals as well, leaving us with many algebraic objects to play with.\

Having this strong bridge between approximation theory (via the box-spline) and combinatorics is powerful. But the question still remains, where can this powerful bridge be applied? Here we link the machinery of zonotopal algebra with two particular polytopes, showing that the zonotopal spaces derived from two particular graphs captures the volumes of these polytopes, as well as the volumes of polytopes appearing in particular polyhedral subdivisions of these polytopes.

The first of the two is the Stanley-Pitman polytope. The Stanley-Pitman polytope, introduced by Stanley and Pitman \cite{PS}, has a polyhedral subdivision whose chambers are indexed naturally by rooted binary trees, giving us a representation of the associahedra. For $t\in \mathbb{R}_+^n$, the Stanley-Pitman polytope is specifically the $n$-dimensional polytope $Q_n(t)$ defined by the equations $$Q_n(t):=\{r\in \mathbb{R}_+^n : \sum_{i=j}^n r_i\leq \sum_{i=j}^n t_i, 1\leq j\leq n\},$$ where we define $\mathbb{R}_+ := [0,\infty)$. Stanley and Pitman study the volume of $Q_n(t)$, $$q_n(t):=\operatorname{vol}(Q_n(t)),$$ and show in \cite{PS} that $q_n(t)$ is a polynomial which is the sum of exactly $C_n :=\frac{{2n \choose n}}{n+1}$ normalized monomials.

\begin{proposition}[Pitman and Stanley, \cite{PS}]\label{r1}
For each $n\in \mathbb{N}\backslash \{0\}$, we have that $$q_n(t)=\sum_{k\in K_n} \prod_{i=1}^n \frac{t_i^{k_i}}{k_i!}=\frac{1}{n!}\sum_{k\in K_n} {n \choose k_1,...,k_n}t_1^{k_1}\cdots t_n^{k_n},$$ where $$K_n :=\{k\in \mathbb{N}^n : \sum_{i=1}^j k_i\geq j\mbox{ for all } 1\leq i\leq n-1\mbox{ and } \sum_{i=1}^n k_i=n\}$$ with $\mathbb{N}:=\{0,1,2,...\}$.
\end{proposition}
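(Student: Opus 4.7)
The plan is to prove the formula by induction on $n$, peeling off the variable $r_1$ via iterated integration. The base case $n=1$ is immediate: $Q_1(t_1) = [0,t_1]$ has volume $t_1$, and the unique element $(1) \in K_1$ contributes $t_1^1/1! = t_1$.

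For the inductive step, I would fix $(r_2,\ldots,r_n)$ and integrate $r_1$ first. The constraints indexed by $j \geq 2$ assert exactly that $(r_2,\ldots,r_n) \in Q_{n-1}(t_2,\ldots,t_n)$, and the remaining $j=1$ constraint becomes $0 \leq r_1 \leq t_1 + \sum_{i=2}^n(t_i - r_i)$, whose upper bound is nonnegative by the $j=2$ constraint already imposed on $Q_{n-1}$. This yields the recursion
\begin{equation*}
q_n(t_1,\ldots,t_n) \;=\; \int_{Q_{n-1}(t_2,\ldots,t_n)} \Bigl(t_1 + \sum_{i=2}^n(t_i - r_i)\Bigr)\, dr_2 \cdots dr_n.
\end{equation*}
The auxiliary integrals $\int_{Q_{n-1}} r_i \, dr$ appearing on the right can be expressed in terms of partial derivatives of $q_{n-1}$ with respect to its parameters, since $t_2,\ldots,t_n$ control the size of $Q_{n-1}$.

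The cleanest way to close the induction is to define $f_n(t) := \sum_{k \in K_n}\prod_{i=1}^n t_i^{k_i}/k_i!$ and verify that $f_n$ satisfies the same integral recursion as $q_n$. This reduces to a combinatorial identity on $K_n$: elements $k \in K_n$ decompose uniquely according to the value of $k_1$, with the tail $(k_2,\ldots,k_n)$ lying in a suitably shifted analogue of $K_{n-1}$; the monomial weights $\prod t_i^{k_i}/k_i!$ then reassemble into the integrand on the right-hand side after integration. The main obstacle is precisely this combinatorial bookkeeping---tracking how the partial-sum inequalities $\sum_{i \leq j} k_i \geq j$ interact with the recursion---which becomes delicate near boundary cases such as $k_1 = 0$ (forcing $k_2 \geq 2$). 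An alternative, more geometric route, in the spirit of the original work of Pitman and Stanley, is to construct explicitly a polyhedral subdivision of $Q_n(t)$ into $C_n$ pieces indexed by plane binary trees (equivalently by $K_n$), each piece a product of simplices contributing the monomial $\prod_{i=1}^n t_i^{k_i}/k_i!$ to the volume; this sidesteps the combinatorial identity at the cost of constructing the subdivision in detail.
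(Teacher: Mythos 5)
First, note that the paper itself does not prove this proposition; it simply cites \cite{PS}, so there is no in-paper proof to compare against. Evaluated on its own terms, your proposal has a real gap in the inductive step. You choose to integrate out $r_1$, whose upper bound $t_1+\sum_{i\geq 2}(t_i-r_i)$ depends on the remaining variables, so the recursion you obtain, $q_n=\int_{Q_{n-1}}\bigl(t_1+\sum_{i\geq 2}(t_i-r_i)\bigr)\,dr$, involves first moments of $Q_{n-1}$ and not just its volume $q_{n-1}$. The assertion that $\int_{Q_{n-1}}r_i\,dr$ can be written in terms of partial derivatives of $q_{n-1}$ cannot be right on degree grounds alone: $\int_{Q_{n-1}}r_i\,dr$ is homogeneous of degree $n$ in $t_2,\ldots,t_n$, while $q_{n-1}$ has degree $n-1$ and its partial derivatives degree $\leq n-2$. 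So the recursion does not close in $q$, and ``verify $f_n$ satisfies the same recursion'' cannot be carried out as stated.

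The fix is to peel off $r_n$ rather than $r_1$: its constraint $0\leq r_n\leq t_n$ has constant bounds, and substituting $r_n=s$ into the remaining tail-sum constraints shows they describe $Q_{n-1}(t_1,\ldots,t_{n-2},t_{n-1}+t_n-s)$, giving
$$q_n(t_1,\ldots,t_n)=\int_0^{t_n} q_{n-1}(t_1,\ldots,t_{n-2},t_{n-1}+s)\,ds,$$
a recursion that closes in $q_{n-1}$ alone. Checking the claimed formula against it is then a short binomial computation using $\int_0^{t_n}\frac{(t_{n-1}+s)^m}{m!}\,ds=\sum_{a+b=m+1,\,b\geq 1}\frac{t_{n-1}^a t_n^b}{a!\,b!}$ together with the observation that, once $\sum_i k_i=n$ is imposed, the final partial-sum inequality in the index set is equivalent to $k_n\geq 1$. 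Beware also a convention mismatch: with the paper's tail-sum definition of $Q_n(t)$, the index set should satisfy $\sum_{i\leq j}k_i\leq j$ (as in the paper's equation (\ref{qninequalities}) and Example \ref{e1}); the $\geq$ in the proposition as printed is copied from \cite{PS}, whose polytope uses head sums, and the two differ by reversing the variable order. Your alternative geometric route, via a tree-indexed subdivision into $C_n$ pieces, is sound and is indeed one of the arguments Pitman and Stanley give, but as written it is only a sketch.
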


The volume $q_n(t)$ of the Stanley-Pitman polytope $Q_n(t)$ can be captured via the zonotopal algebra of the \textit{broken wheel graph} $BW_n$: a finite undirected graph with $n + 1$ vertices and $2n$ edges, which defines the graphical matroid needed for our constructions. In section \ref{BWG}, we will rigorously define the broken wheel graph $BW_n$, define what it means to be a \textit{parking function} of $BW_n$, and discuss some properties of such parking function. We will then use these properties in section \ref{zonotopeBWG}, where we will discuss the Tutte polynomial and Hilbert series of $BW_n$, as well as develop the zonotopal algebra of $BW_n$, after giving a review of the general theory of zonotopal algebra. Section \ref{SPpolytope} of this paper will specifically address the Stanley-Pitman polytope and use the machinery developed to prove that the Stanley-Pitman volume polynomial $q_n(t)$ is the monic polynomial in the central Dahmen-Micchelli space of $BW_n$ which corresponds to the parking function $(1,...,1)\in \mathbb{R}^n$, and that it is the unique internally monic polynomial of maximal degree in the internal Dahmen-Micchelli space of $BW_n$ which corresponds to the unique internal parking function $(1,1,...,1,0)\in \mathbb{R}^{n+1}$. Using the following notation, we will also further characterize the volume polynomial $q_n(t)$ with the following two theorems: denote partial differentiation with respect to $t_i$ by $D_i$; i.e with $p_i : \mathbb{R}^n\rightarrow \mathbb{R}^n, t\mapsto t_i,$  we have $D_i:=p_i(D)$, and $D_0 := 0$. We then have that:

\begin{theorem}\label{t1}
The polynomial $q_n(t)$ is the only polynomial (up to normalization) of degree $n$ that is annihilated by each of the following differential operators $$D_i(D_i-D_{i-1}), \mbox{ } i=1,...,n.$$ Moreover, let $\mathcal{P}_{n,j}$ be the subspace of homogeneous polynomials (in $n$-indeterminates) of degree $j$ that are annihilated by each of the above differential operators. Then:
\begin{enumerate}
\item $\mathcal{P}_{n,j}$ lies in the span of the translates of $q_n(t)$.
\item dim $\mathcal{P}_{n,j}$ = ${n \choose j}$.
\end{enumerate}
\end{theorem}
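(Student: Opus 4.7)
The plan is to recognize the operators $D_i(D_i-D_{i-1})$ as defining a graded quadratic ideal $I\subset R:=\mathbb{C}[y_1,\dots,y_n]$ (with $y_0:=0$), to observe that $\mathcal{P}_{n,j}$ is then the degree-$j$ part of the Macaulay inverse system $I^\perp$, and to exploit the complete-intersection/Gorenstein structure of $R/I$ to pin down $q_n(t)$.

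First I would check that the $n$ generators $y_i(y_i-y_{i-1})$ form a regular sequence. A short Gr\"obner computation in the lex order $y_n>\cdots>y_1$ (or equivalently the inductive reduction $y_1^2\equiv 0$, $y_i^2\equiv y_{i-1}y_i$) shows that the initial ideal is the monomial ideal $(y_1^2,\dots,y_n^2)$, so
\[
\mathrm{Hilb}(R/I;q)\;=\;\prod_{i=1}^n\frac{1-q^2}{1-q}\;=\;(1+q)^n.
\]
Thus $R/I$ is an Artinian complete intersection of $n$ quadrics in $n$ variables, hence Gorenstein, with socle concentrated in degree $n$ and of dimension one. Dually $\dim\mathcal{P}_{n,j}=\binom{n}{j}$ and $\dim\mathcal{P}_{n,n}=1$; this settles (2) and gives the uniqueness in degree $n$ as soon as we know $q_n\in\mathcal{P}_{n,n}$.

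Next I would verify that $q_n(t)$ is annihilated by each $D_i(D_i-D_{i-1})$. One route is a direct calculation from the expansion in Proposition \ref{r1}: $D_i^2$ shifts $k_i\mapsto k_i-2$ while $D_iD_{i-1}$ shifts $(k_{i-1},k_i)\mapsto(k_{i-1}-1,k_i-1)$, and a re-indexing of the two sums, controlled by the defining inequalities $\sum_{\ell\le j}k_\ell\ge j$ of $K_n$, forces cancellation term by term. A cleaner route is to use the zonotopal framework the paper develops in its later sections: the operators are precisely the generators of the ideal defining the $\mathcal{P}$-central space of $BW_n$, and $q_n$ is identified there with the element indexed by the all-ones $BW_n$-parking function, so membership is built in. With membership in hand, assertion (1) follows from Gorenstein duality: $I^\perp$ is cyclic as an $R$-module under differentiation, generated by any nonzero top-degree element, so every $f\in\mathcal{P}_{n,j}$ has the form $g(D)q_n$ with $g$ homogeneous of degree $n-j$; since $I$ has constant coefficients, $I^\perp$ is translation-invariant, and Taylor's formula $q_n(t-a)=\sum_{\alpha}\tfrac{(-a)^\alpha}{\alpha!}D^\alpha q_n(t)$ shows that the translates of $q_n$ span the same subspace as the derivatives $D^\alpha q_n$, establishing (1).

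The main obstacle, I expect, is the direct verification that $q_n$ is annihilated by every $D_i(D_i-D_{i-1})$: the bookkeeping on $K_n$ is delicate and requires genuine combinatorial input rather than a formal ring-theoretic argument. Everything else — the Hilbert series of $R/I$, Gorenstein duality, and the Taylor-expansion trick — should be routine once the membership $q_n\in I^\perp$ is in hand.
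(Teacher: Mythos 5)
Your proposal is correct in outline and takes a genuinely different route from the paper. The paper establishes Theorem 1 by leaning on its zonotopal machinery: it identifies the operators $D_i(D_i-D_{i-1})$ (up to reversing the variable order) with the defining ideal of the internal Dahmen--Micchelli space $\mathcal{D}_-(X_{n+1})$ of the broken wheel graph, then uses Theorem 5 (membership of $q_n$), the second Corollary labelled \ref{c4} (translates of $q_n$ span $\mathcal{D}_-(X_{n+1})$), Theorem \ref{t3} (binomial internal Hilbert series), and Proposition \ref{r4}. Your argument replaces all of that superstructure with a short piece of commutative algebra: the Gr\"obner-basis observation (leading terms $y_1^2,\dots,y_n^2$ are pairwise coprime, so by Buchberger's coprimality criterion the generators are already a Gr\"obner basis) gives $\mathrm{Hilb}(R/I;q)=(1+q)^n$ directly; the complete-intersection/Gorenstein structure then gives the one-dimensional socle in degree $n$, the binomial homogeneous dimensions of $I^\perp$, and the cyclicity of $I^\perp$ as an apolarity module; and the Taylor-expansion trick converts ``span of derivatives'' into ``span of translates.'' This is cleaner and more self-contained than the paper's route, and it sidesteps the paper's somewhat delicate index/reversal bookkeeping relating $\mathcal{D}(X_n)$, $\mathcal{D}_-(X_{n+1})$, and $\bar q_n$ versus $q_n$. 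Both approaches bottleneck on the same combinatorial input, which you correctly flag as the real work: verifying that $q_n$ is annihilated by every $D_i(D_i-D_{i-1})$. The paper does this in the proof of Theorem \ref{t5} via the term-by-term cancellation on $K_n$ that you sketch. One small inaccuracy in your framework route: those operators are generators of the ideal cutting out the \emph{internal} Dahmen--Micchelli space $\mathcal{D}_-(X_{n+1})$ (after variable reversal), not the $\mathcal{P}$-central space of $BW_n$; but since your main argument is the commutative-algebra one and you also offer the direct computation, this does not affect the validity of the proof.
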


\begin{theorem}\label{t2}
The polynomial $q_n(t)$ is the only polynomial $q(t)$ (in $n$ variables) that satisfies the following two properties:
\begin{enumerate}
\item With $m$ the square-free monomial $$m: t\mapsto \prod_{i=1}^{n} t_i,$$ the monomial support of $(q-m)(t)$ is disjoint of the monomial support of the polynomial $$t\mapsto \prod_{i=1}^n (t_i+t_{i-1}), \mbox{ } t_0 := 0.$$
\item $q(t)$ is annihilated by each of the following differential operators: $$(D_{j+1}-D_j)(\prod_{k=i}^j D_k)(D_i-D_{i-1}), \mbox{ } 1\leq i \leq j < n,$$ and $$ (\prod_{k=1}^n D_k)(D_i-D_{i-1}),\mbox{ } 1\leq i\leq n.$$
\end{enumerate}
\end{theorem}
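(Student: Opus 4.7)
The plan is to interpret the two hypotheses of Theorem~\ref{t2} as exactly the statement that $q$ is the internally monic basis element of the internal Dahmen--Micchelli space of $BW_n$ indexed by the internal parking function $(1,\ldots,1,0)\in\mathbb{R}^{n+1}$, a role that the paper's earlier discussion already ascribes to $q_n(t)$. Uniqueness then falls out of the general zonotopal-algebra machinery, and verification that $q_n$ itself meets both conditions reduces to bookkeeping against Proposition~\ref{r1}.

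First I would recognize the operators in (2) as the defining power-ideal generators of the internal Dahmen--Micchelli space of $BW_n$. Labelling the hub vertex of $BW_n$ by $v_0$ and the path vertices by $v_1,\ldots,v_n$, each spoke $v_0v_i$ carries the linear form $D_i$ and each path edge $v_{i-1}v_i$ carries $D_i-D_{i-1}$ (with $D_0:=0$). The minimal edge-cut separating the contiguous arc $\{v_i,\ldots,v_j\}$ from the rest of $BW_n$ consists of the two path edges $v_{i-1}v_i$, $v_jv_{j+1}$ together with the spokes $v_0v_i,\ldots,v_0v_j$, so its associated product of forms is exactly $(D_{j+1}-D_j)(\prod_{k=i}^j D_k)(D_i-D_{i-1})$; the boundary cocircuit at $v_n$ produces the second family $(\prod_{k=1}^n D_k)(D_i-D_{i-1})$. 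Under the edge ordering implicit in the paper's construction of the $BW_n$-parking functions, these cocircuits are precisely the internally active ones, so (2) asserts that $q$ lies in the top-degree component of the internal Dahmen--Micchelli space.

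Next I would interpret (1) as internal monicness at the parking function $(1,\ldots,1,0)$. By the machinery reviewed in the paper, the internal Dahmen--Micchelli space has a canonical basis in bijection with internal parking functions of $BW_n$, each basis polynomial distinguished by a unique top monomial coming from the dual monomial basis of the $\mathcal{P}$-internal space. The set of possible top monomials in top degree is precisely the monomial support of $\prod_{i=1}^n(t_i+t_{i-1})$, each factor recording the two ways of resolving the rim edge $v_{i-1}v_i$ against its spokes. Since $m=t_1\cdots t_n$ is the top monomial attached to $(1,\ldots,1,0)$, condition (1) forces the coefficient of $m$ in $q$ to be $1$ and all other top monomials to vanish, which pins $q$ down as that unique basis element. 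Verifying that $q_n$ satisfies (1) and (2) then goes as follows: property (2) follows from the identification of $q_n$ with the internal basis polynomial indexed by $(1,\ldots,1,0)$ carried out earlier in the paper, or directly by substituting the formula of Proposition~\ref{r1} into each operator and observing that the $K_n$-constraints force the resulting sum to telescope; property (1) is immediate from the explicit formula, since the coefficient of $\prod t_i$ equals $\tfrac{1}{n!}\binom{n}{1,\ldots,1}=1$, whereas any other monomial $\prod t_{\varepsilon_i}$ with $\varepsilon_i\in\{i,i-1\}$ has some $\varepsilon_i=i-1$ and hence a multi-index lying outside $K_n$.

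The main obstacle is the combinatorial matching in the first step: one must fix a concrete edge ordering of $BW_n$ that makes the listed operators correspond to the \emph{internal} cocircuits, rather than the central or external ones, so that their Macaulay inverse system is the internal Dahmen--Micchelli space of the desired maximal-degree piece. In particular, one has to check that the two families in (2), taken together over their prescribed index ranges, exhaust the internally active cocircuits and generate the relevant power ideal up to degree $n$. Once this dictionary is secured, the remaining steps reduce to routine verification using Proposition~\ref{r1}.
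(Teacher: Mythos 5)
There is a genuine gap: you have misidentified which zonotopal space the operators in~(2) cut out. As you yourself observe in the first step, the factor
$(D_{j+1}-D_j)\bigl(\prod_{k=i}^j D_k\bigr)(D_i-D_{i-1})$
is the product of linear forms over the minimal edge-cut separating the contiguous arc $\{v_i,\dots,v_j\}$ from the rest of $BW_n$ --- i.e.\ a \emph{cocircuit} of $BW_n$. Products over cocircuits generate the central ideal $\mathcal{J}(X_n)$, whose Macaulay inverse system is the \emph{central} Dahmen--Micchelli space $\mathcal{D}(X_n)$, exactly the set $p_{X_{i,j,n}}(D)$ from the paper's definition. They do not generate the internal ideal $\mathcal{J}_-(X_n)$, which is built from the strictly smaller $\mathbb{B}_-(X)$-cocircuits; in the paper those are the three-term operators $D_i(D_{i+1}-D_i)$ together with $D_n$, a genuinely different list. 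So the sentence ``these cocircuits are precisely the internally active ones, so (2) asserts that $q$ lies in the top-degree component of the internal Dahmen--Micchelli space'' is where the argument goes wrong: the cocircuits of (2) put $q$ into $\mathcal{D}(X_n)$, not into $\mathcal{D}_-(X_{n+1})$. The ensuing reference to the internal parking function $(1,\dots,1,0)\in\mathbb{R}^{n+1}$ is also inconsistent with $q$ being a polynomial in $n$ variables.

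This matters for step two as well. The monomial support of $\prod_{i=1}^n(t_i+t_{i-1})$ is, by Proposition~\ref{p1}, the set $S_{\max}(BW_n)$ of \emph{maximal parking functions} of $BW_n$; these include exponent vectors with entries equal to $2$, so they are not the square-free exponents that index the top-degree part of the internal $\mathcal{P}$-space. Under the correct (central) reading, condition~(1) forces $q$ to agree with the $(1,\dots,1)$-monic element of $\mathcal{D}(X_n)$: among the dual monomials $\{m_s : s \in S(BW_n)\}$, only $m = t_1\cdots t_n$ appears in $q$, with coefficient~$1$. Combined with membership in $\mathcal{D}(X_n)$ from~(2) and the duality of Proposition~\ref{r4}, this pins $q$ down as $q_n(t)$. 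Your overall template --- read~(2) as membership in a Dahmen--Micchelli space, read~(1) as a monicness normalization in the dual basis, then appeal to uniqueness of monic basis elements --- matches the paper's proof, so replacing ``internal, $BW_{n+1}$, $(1,\dots,1,0)$'' throughout by ``central, $BW_n$, $(1,\dots,1)$'' would repair the argument.
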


We will then review the polyhedral subdivision of $Q_n(t)$ given by Pitman and Stanley \cite{PS}, whose set of interior faces, ordered by inclusion, is isomorphic to the face lattice of the dual associahedron, and note how the volume of each polytope in  this subdivision is captured by the zonotopal algebra of the broken wheel graph. This observation is motivation for studying the volumes of polyhedral subdivisions in terms of zonotopal algebras and lead us to our study of the second polytope.

In section \ref{GBWG} we will introduce the second polytope: the regular simplex $\mathfrak{Sim}_n(t_1,...,t_n)$ with parameters $t_1,...,t_n\in \mathbb{R}_+^n$, defined by the inequalities $$\sum_{i=1}^n r_i\leq \sum_{i=1}^n t_i, \mbox{ } r_i\in \mathbb{R}_+^n,$$ where the $(r_i)_{i\in [n]}$ are variables. For every rooted tree $T$ with $n$ vertices, we can construct $2^{n-1}$ directed graphs, which we will refer to as \textit{generalized broken wheel graphs}. Each generalized broken wheel graph constructed from $T$ will give us a polytope, its volume polynomial, and a \textit{reference monomial}. The $2^{n-1}$ polytopes together give a polyhedral subdivision of $\mathfrak{Sim}_n(t_1,...,t_n)$, their volume polynomials together give a basis for the subspace of homogeneous polynomials of degree $n$ of the corresponding central Dahmen-Micchelli space, and their reference monomials together give a basis for its dual. And so, for each rooted tree with $n$ vertices we have a polyhedral subdivision of $\mathfrak{Sim}_n(t_1,...,t_n)$ completely characterized by the zonotopal algebra of the generalized broken wheel graphs constructed from $T$.

Our study provides intriguing and quite rich examples of zonotopal algebra, on the one hand, and sheds new light on how volumes of polytopes, and their polyhedral subdivisions, can be studied on the other. This paper is meant for both the eyes of those familiar and unfamiliar with the study of zonotopal algebras. For those familiar, we hope to provide you with an enriching application which will spark your further interest. For those unfamiliar, we hope to illustrate to you the potential of zonotopal algebras as a combinatorial way to connect to analytic tools.

\section{The Broken Wheel Graph}\label{BWG}

Before we jump into the details of the broken wheel graph, let's define it precisely. The \Dfn{broken wheel graph} $BW_n$ is a finite undirected graph with $n+1$ vertices $[0:n]$ and $2n$ edges. The root vertex $0$ is connected twice to the vertex $1$, and once to each other vertex. In addition, a single edge connects each consecutive pair $i$ and $i+1$, with $i=1,...,n-1$.

A \Dfn{wheel graph} $W_n$ consists of the edges of a regular $n$-gon, together with all the radii that connect the vertices of the $n$-gon to its center. In algebraic graph theory, the $n$ verities of the $n$-gon are associated with the standard basis $(e_i)_{i=1}^n$ of $\mathbb{R}^n$, while the center is identified with $e_0:=0$. The edge that connects the vertices $i$ and $j$ is realized by the vector $e_i-e_j$ (or $e_j-e_i$, as the sign will not matter for us). For certain purposes (such as the definition of the internal activity and the external activity of the forests of the graph) it is necessary to order the edges, viz. their vector realization. The order that serves our needs is as follows: $$x_{2i-1}=e_i-e_{i-1}, x_{2i}=e_i, i=1,2,...,n.$$ The vectors $X_n:=(x_1,...,x_{2n})$ correspond to the edges of the wheel $W_n$: odd numbered vectors corresponding  to the edges of the $n$-gon and the even vectors corresponding to the radii. Note that we have written $x_1=e_1-e_0=e_1$. This is because the broken wheel $BW_n$ is obtained from the wheel $W_n$ when replacing the $n$-gon edge $e_1-e_n$ by the radius $e_1$. Thus, the edge $e_1$ is doubled in $BW_n$.\

Let $(e_i)_{i=1}^n$ denote the standard basis for $\mathbb{R}^n$ and let $e_0:=0$. Let's identify each vertex $0\leq i\leq n$ of $BW_n$ with the vector $e_i$ and each edge that connects vertex $i$ to vertex $j>i$ with the vector $e_j-e_i$. Letting $$x_{2i-1}=e_i-e_{i-1},\mbox{ } x_{2i}=e_i,\mbox{ } i=1,...,n,$$ we then use the following order on the edge set of $BW_n$: $$BW_n:=(x_1\prec x_2\prec \cdots \prec x_{2n}).$$

\begin{figure}[htbp]
\begin{center}
 \begin{tikzpicture}
\tikzset{vertex/.style = {shape=circle,draw,minimum size=1.5em}}
\tikzset{edge/.style = {-,> = latex'}}
\node[vertex] (a) at  (0,0) {0};
\node[vertex] (b) at  (0,2) {1};
\node[vertex] (c) at  (-2,0) {2};
\node[vertex] (d) at  (0,-2) {3};
\node[vertex] (e) at  (2,0) {4};
\draw[edge]  (a) edge node [above] {$x_4$} (c);
\draw[edge] (a) edge node [left] {$x_6$} (d);
\draw[edge] (a) edge node [above] {$x_8$} (e);

\draw[edge] (a)  edge[bend left] node [left] {$x_2$} (b);
\draw[edge] (a)  edge[bend right] node [right] {$x_1$} (b);
\draw[edge] (b)  edge[bend right] node [left] {$x_3$} (c);
\draw[edge] (c)  edge[bend right] node [left] {$x_5$} (d);
\draw[edge] (d)  edge[bend right] node [right] {$x_7$} (e);
\end{tikzpicture}
\caption{The broken wheel graph $BW_4$.}
\label{BW4}
\end{center}
\end{figure}
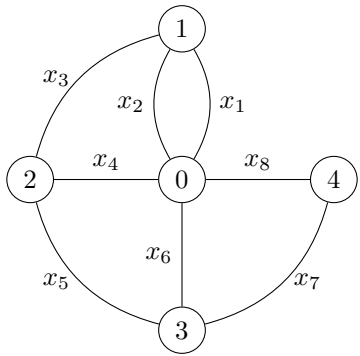

With this order, the edges of $BW_n$ form the columns of an $n\times 2n$ matrix denoted $X_n$. For example, the matrix $X_4$ is $$X_4=\left[\begin{array}{cccccccc}1 & 1 & -1 & 0 & 0 & 0 & 0 & 0 \\0 & 0 & 1 & 1 & -1 & 0 & 0 & 0 \\0 & 0 & 0 & 0 & 1 & 1 & -1 & 0 \\0 & 0 & 0 & 0 & 0 & 0 & 1 & 1\end{array}\right].$$\vspace{.01in}

With this identification, ordering of the edges of $BW_n$, and the matrix $X_n$ we have enough to construct three pairs of polynomial spaces, which are examples of the fundamental pairs of polynomial spaces studied generally in the field zonotopal algebra. Before we do this (in section \ref{zonotopeBWG}), we need to talk about the \textit{parking functions} of $BW_n$, as they are key to discussing these pairs of spaces.\

\subsection{The Parking Functions of the Broken Wheel Graph}

Given a subset of vertices $[i:j]$ of $BW_n$ and a vertex $k\in [i:j]$, we denote by $$d(i,k,j)$$ the \Dfn{out-degree} of $k$, viz. the number of edges that connect $k$ to vertices in the complement of $[i:j]$. Note that $d(i,k,j)\in \{1,2,3\}$, $0<i\leq k\leq j\leq n$, for $BW_n$. Parking functions of graphs are studied in generality by Postnikov and Shapiro in \cite{2003math......1110P}.  Following their definition, a \Dfn{parking function} of the graph is a function $s\in \mathbb{N}^n$, with $s(i)$ denoting the $i^{th}$ entry of $s$, which satisfies the following condition: given any $1\leq i\leq j\leq n$, there exists a $k\in [i:j]$ such that $s(k)<d(i,k,j)$. This definition follows suit from the definition of parking functions given in \cite{PS}. A parking function $s$ is called an \Dfn{internal parking function} of a graph if for every $1\leq i\leq j\leq n$, we either have a $k\in [i:j-1]$ such that $s(k)<d(i,k,j)$ or $s(j)<d(i,j,j)-1$. Let the set of parking functions of $BW_n$ be denoted by $$S(BW_n)$$ and the set of internal parking functions of $BW_n$ by $$S_-(BW_n).$$ 

\begin{lemma}\label{l6}
If $s$ is a parking function of $BW_n$, then $\prod_{k=i}^j s(k)\leq 2$, while $\prod_{k=i}^n s(k)\leq 1$, for every $1\leq i\leq j\leq n$.
\end{lemma}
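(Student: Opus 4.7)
My plan is to first read off the out-degrees $d(i,k,j)$ of $BW_n$ explicitly, and then argue each inequality by contradiction, applying the parking function condition to a judiciously chosen sub-interval. A direct inspection of $BW_n$ should give the following values for $1\leq i\leq j\leq n$ and $k\in [i:j]$: $d(i,k,j)=1$ when $i<k<j$ (only the edge from $k$ to the root $0$ leaves $[i:j]$); $d(i,k,j)=2$ when $k\in\{i,j\}$ with $k<n$; $d(i,n,n)=1$ for the right-end vertex (when $j=n$); and for singletons $\{k\}$, $d(k,k,k)=3$ when $k<n$, while $d(n,n,n)=2$. The doubled edge between $0$ and $1$ only enters at the singleton $\{1\}$, where it still contributes $d(1,1,1)=3$.

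For the first inequality, I will suppose $\prod_{k=i}^{j}s(k)\geq 3$ and seek a contradiction. Then every $s(k)$ in the range is at least $1$, and either (A) some $s(m)\geq 3$ for $m\in[i:j]$, or (B) there exist $a<b$ in $[i:j]$ with $s(a),s(b)\geq 2$. Case (A) contradicts the singleton condition on $\{m\}$, which forces $s(m)<3$. In case (B), I apply the condition to $[a:b]$: no interior index can serve as witness (that would force $s(k)=0$), and neither endpoint can either, since $s(a)\geq 2 = d(a,a,b)$ and $s(b)\geq 2\geq d(a,b,b)$. No witness exists, a contradiction.

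For the second inequality, I will similarly suppose $\prod_{k=i}^{n}s(k)\geq 2$. Every $s(k)$ is at least $1$; the singleton condition at $\{n\}$ forces $s(n)<2$, so $s(n)=1$. If some $s(m)\geq 2$ with $i\leq m\leq n-1$, then the interval $[m:n]$ admits no witness: interior indices are blocked, $s(m)\geq 2=d(m,m,n)$, and $s(n)=1=d(m,n,n)$. Hence every $s(k)$ must equal $1$ on $[i:n]$, and the product equals $1$.

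The main obstacle, and really the only thing requiring care, is the enumeration of the out-degrees: the right-end vertex $n$ has no neighbor beyond it in the path, so its boundary behavior differs by $1$ from the other vertices and must be tracked separately. Once that bookkeeping is done, both inequalities reduce to very short contradictions obtained by verifying that no index in the selected interval can witness the parking function condition.
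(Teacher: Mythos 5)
Your proof is correct, and it follows essentially the same strategy as the paper's: read off the out-degrees $d(i,k,j)$ (which you do accurately, including the special behavior at the right endpoint $n$), then show that appropriate intervals cannot admit a witness to the parking-function condition whenever the claimed product bounds are violated. The only cosmetic difference is framing: the paper argues directly that a value $2$ forces a later zero and that two $2$'s force a zero between them, then assembles the conclusion, whereas you run the same observations as a contradiction from $\prod\geq 3$ (resp.\ $\prod\geq 2$), which makes the final step of the deduction a bit more explicit.
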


\begin{proof}
Now let's consider $s\in S(BW_n)$. If $i=j=n$, then the only $k$ we can choose is $k=n$ and we must then have that $s(n)\leq 1$, as $d(n,n,n)=2$ for $BW_n$. If we choose $i=j<n$, then the only $k$ we can choose is $k=i=j<n$ and we must then have that $s(n)\leq 2$.\

If we have that $s(i)=2$, and we choose $j$ to be $n$, then we can see that $d(i,i,n)=2$ and, for $k>i$, $d(i,k,n)=1$. We can then conclude from these two observations that $s(k)=0$ for some $k>i$, as this is the only way we can find a $k\in [i:n]$ such that $s(k)<d(i,k,n)$.\

Let's now assume a bit further that $s(i)=s(j)=2$ for some $1\leq i < j <n$. As $d(i,i,j)=d(i,j,j)=2$, while $d(i,k,j)=1$ for $i<k<j$, we can see that $s(k)=0$ for some $i<k<j$.\

From all of these observations, we can conclude that, in order for $s$ to be a parking function of $BW_n$, we must have that $\prod_{k=i}^j s(k)\leq 2$, while $\prod_{k=i}^n s(k)\leq 1$, for every $1\leq i\leq j\leq n$.
\end{proof}

Let us now define a particular subset of $S(BW_n)$ which will be necessary for our studies. The set of \Dfn{maximal parking functions} $S_{max}(BW_n)$ of $BW_n$ is defined as $$S_{max}(BW_n):=\{s\in S(BW_n) : |s|:=\sum_{i=1}^n s(i)=n\}.$$ We can explicitly define the sets $S(BW_n), S_{max}(BW_n)$, and $S_-(BW_n)$ as the \Dfn{support} of certain polynomials. For $s\in \mathbb{Z}_+^n$, let us define the monomial $$m_s: t\mapsto t^s := \prod_{i=1}^n t_i^{s(i)}.$$ Then, given a polynomial $p\in \mathbb{K}[t_1,...,t_n]$, where $\mathbb{K}$ is a field of characteristic $0$, the \Dfn{monomial support} $\mathrm{supp}\mbox{ } p(t)$ of $p(t)$ is the set of vectors $s\in \mathbb{Z}_+^n$ for which $$m_s(D)p(t) |_{t=0}\neq 0.$$

\begin{example}\label{e1}
For $q_2(t)=t_2^2/2+t_1t_2$, we have that $\mathrm{supp}\mbox{ } q_2(t) = \{(1,1),(0,2)\}$.
\end{example}

We now have the following two theorems which characterize the sets $S(BW_n), S_{max}(BW_n)$, and $S_{-}(BW_n)$ as the support of certain polynomials:

\begin{proposition}\label{p1}
For $a\in \{0,1\}$, let $$p_{n,a}(t):=\prod_{i=1}^n (a+t_{i-1}+t_i),\mbox{ } t_0:=0.$$ Then $$S_{max}(BW_n)=\mathrm{supp}\mbox{ } p_{n,0}(t) \mbox { and } S(BW_n)=\mathrm{supp}\mbox{ } p_{n,1}(t),$$ and we have that $$|S_{max}(BW_n)|=2^{n-1} \mbox { and } |S(BW_n)|\leq 2\cdot 3^{n-1}.$$
\end{proposition}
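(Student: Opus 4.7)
The proposition combines two support identifications with two cardinality estimates, and the cardinality bounds come for free from formal expansion. Since $t_0 = 0$, the factorization $p_{n, a}(t) = (a + t_1) \prod_{i=2}^n (a + t_{i-1} + t_i)$ has first factor with $1$ or $2$ terms (depending on $a$) and each remaining factor with $2$ or $3$ terms; multiplying gives $|\mathrm{supp}\, p_{n, 0}(t)| \le 2^{n-1}$ and $|\mathrm{supp}\, p_{n, 1}(t)| \le 2 \cdot 3^{n-1}$. Combined with the support identifications this gives $|S(BW_n)| \le 2 \cdot 3^{n-1}$; the sharp equality $|S_{max}(BW_n)| = 2^{n-1}$ will follow once we verify, additionally, that the $2^{n-1}$ monomials in the expansion of $p_{n, 0}$ are pairwise distinct.

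I would parametrize the expansion of $p_{n, 0}$ by $\epsilon = (\epsilon_2, \ldots, \epsilon_n) \in \{0, 1\}^{n-1}$, where $\epsilon_i = 0$ (resp.\ $\epsilon_i = 1$) selects $t_{i-1}$ (resp.\ $t_i$) from the $i$-th factor. The resulting exponent $s(\epsilon)$ satisfies $s(1) = 2 - \epsilon_2$, $s(k) = 1 + \epsilon_k - \epsilon_{k+1}$ for $2 \le k \le n - 1$, and $s(n) = \epsilon_n$. One recovers $\epsilon$ from $s$ inductively ($\epsilon_n = s(n)$, then $\epsilon_k = s(k) + \epsilon_{k+1} - 1$), so all $2^{n-1}$ monomials are distinct. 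For $s(\epsilon) \in S_{max}(BW_n)$: the total degree is $n$ by construction, and for the parking condition I would telescope $\sum_{k=i}^j s(\epsilon)(k)$ into closed form and compute $\sum_{k=i}^j d(i, k, j)$ directly from the edge structure of $BW_n$. A short case check shows the former is strictly smaller than the latter on every range $[i, j] \subseteq [1, n]$, which forces some $k \in [i, j]$ with $s(\epsilon)(k) < d(i, k, j)$, as required.

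The reverse inclusion $S_{max}(BW_n) \subseteq \mathrm{supp}\, p_{n, 0}$ is the heart of the proof. Given $s \in S_{max}(BW_n)$, Lemma \ref{l6} restricts $s(k)$ to $\{0, 1, 2\}$ with $s(n) \le 1$, and $|s| = n$ gives the cardinality equality $\#\{k : s(k) = 2\} = \#\{k : s(k) = 0\} =: m$. The tail-product bound $\prod_{k=i}^n s(k) \le 1$ forces every $2$ to be succeeded by a $0$ strictly later; the interval-product bound $\prod_{k=i}^j s(k) \le 2$ (for $j < n$) forces a $0$ strictly between every pair of consecutive $2$'s (whose positions necessarily lie in $[1, n-1]$); and matching these constraints with the total count $m$ pins the positions down exactly. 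Thus the $2$'s and $0$'s sit at strictly interlaced positions $p_1 < q_1 < p_2 < q_2 < \cdots < p_m < q_m \le n$, with all other entries equal to $1$ and no $0$ preceding $p_1$. From this alternating pattern, $\sum_{k=1}^j s(k) = j + \#\{r : p_r \le j\} - \#\{r : q_r \le j\} \in \{j, j + 1\}$ for every $1 \le j \le n - 1$, so setting $\epsilon_{j+1} := j + 1 - \sum_{k=1}^j s(k)$ produces an $\epsilon \in \{0, 1\}^{n-1}$ with $s = s(\epsilon)$.

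The identity $\mathrm{supp}\, p_{n, 1}(t) = S(BW_n)$ follows by an entirely parallel template, now parameterizing the expansion by choice functions $c$ with $c_1 \in \{1, t_1\}$ and $c_i \in \{1, t_{i-1}, t_i\}$ for $i \ge 2$; the formulas for $s(c)$ mirror those for $s(\epsilon)$ with an extra ``skip'' option that contributes nothing to the exponent. The inclusion $\mathrm{supp}\, p_{n, 1} \subseteq S(BW_n)$ goes through by the same sum-of-degrees comparison, and for the reverse inclusion I would build $c$ from $s$ by a right-to-left recursion, with the parking conditions at each stage supplying precisely the slack needed for a compatible $c_k$ to exist. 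The main obstacle throughout is the structural classification in the third paragraph: extracting the rigid interlaced $2$-$0$ pattern from Lemma \ref{l6} together with $|s| = n$ requires a careful interplay of the tail-product bound, the interval-product bound, and the cardinality matching $\#\{2\text{'s}\} = \#\{0\text{'s}\}$, and for the $p_{n, 1}$ case one must additionally track how this structure relaxes when $|s|$ drops below $n$.
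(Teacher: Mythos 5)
Your proposal is correct in outline and takes a genuinely different route from the paper's. The paper proves both support identities by induction on $n$, using the multiplicative recursion $p_{n+1,a} = p_{n,a}\cdot(\text{new factor})$, and for the $a=0$ case it leans on Corollary~\ref{c2}, which characterizes $S_{max}(BW_n)$ as sums $e_1+\sum a_j$, $a_j\in\{e_j,e_{j+1}\}$. That corollary in turn rests on Proposition~\ref{p4} (the explicit Tutte-polynomial computation giving $T_{X_n}(0,1)=2^{n-1}$) and the cited result of Holtz--Ron grading parking functions by the Hilbert series. Your proof avoids this entire dependency chain: the $\epsilon$-parametrization of $\mathrm{supp}\,p_{n,0}$ is in effect a first-principles rederivation of Corollary~\ref{c2} straight from Lemma~\ref{l6} plus a pigeonhole count of $2$'s and $0$'s, and the sum-of-degrees comparison ($\sum_{k=i}^{j} s(k) < \sum_{k=i}^{j} d(i,k,j)$ forces some $s(k)<d(i,k,j)$) is a clean, uniform way to certify membership in $S(BW_n)$ that the paper doesn't use at all. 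So what you buy is a more elementary, self-contained argument; what the paper buys is brevity once the Tutte machinery is in place.

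One step needs strengthening. For the reverse inclusion $S(BW_n)\subseteq \mathrm{supp}\,p_{n,1}$, your ``right-to-left recursion'' as stated can deadlock: having chosen $c_{k+1}$, if $s(k)-[c_{k+1}=0]$ forces $c_k=1$, then $[c_k=0]=0$ and a later demand $s(k-1)=2$ becomes unsatisfiable; conversely preferring $c_k=0$ can fail when $s(k-1)=0$. The parking condition does supply the needed slack, but not locally stage-by-stage — you need either a one-step lookahead rule keyed to $s(k-1)$, or (cleaner) a Hall-type argument. Concretely: first show by induction on $j-i$, using only the definition of a parking function (case on the witness $k_0$ and splitting $[i:j]$ at $k_0$), that $\sum_{k=i}^{j} s(k)\le j-i+2$ for $j<n$ and $\sum_{k=i}^{n} s(k)\le n-i+1$. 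These are exactly the interval Hall conditions for the bipartite incidence in which position $k$ can ``serve'' $s(k)$ (via $c_k=1$) or $s(k-1)$ (via $c_k=0$), so a compatible $c$ with $c_1\ne 0$ exists. Your third-paragraph classification of $S_{max}$ is sound as sketched (the count $\#\{2\text{'s}\}=\#\{0\text{'s}\}$ and the disjoint-slots pigeonhole do pin down the strict interlacing, including ``no $0$ before $p_1$''), and the forward inclusions and cardinality counts are fine. So: right approach, but the $p_{n,1}$ reverse inclusion needs the Hall/lookahead argument spelled out rather than the bare greedy recursion.
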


\begin{proof}
Consider the polynomial expansion of $p_{n,0}(t)$: \begin{equation}\label{pn0} p_{n,0}(t)=\prod_{i=1}^n (t_{i-1}+t_i)=(t_1^2+t_1t_2)\prod_{i=3}^n(t_{i-1}+t_i).\end{equation} We can see that $p_{n,0}(t)$ is a polynomial with $2^{n-1}$ monomials, as it is a polynomial which can be factored into $n$ binomials. Thus we have that $|\mathrm{supp}\mbox{ } p_{n,0}(t)|=2^{n-1}$.

Let us prove the equality in question by induction on $n$. First, let us assume that $n=1$. We then have that $p_{1,0}(t)=t_1,$ giving us that $\mathrm{supp}\mbox{ } p_{1,0}(t)=\{(1)\}$. Corollary \ref{c2} of this note tells us that the set of maximal parking functions is exactly the subset of $\mathbb{N}^n$ of all sequences $s$ that can be written as a sum $$s=e_1+\sum_{j=1}^{n-1} a_j,$$ with $(e_i)_{i=1}^n$ the standard basis for $\mathbb{N}^n$, and $a_j\in \{e_j,e_{j+1}\}$ for every $j$. Thus $S_{max}(BW_1)=\{(1)\}$ and we have equality for our base case.\

Now, assuming that $S_{max}(BW_k)=\mathrm{supp}\mbox{ } p_{k,0}(t)$ for $k\leq n$, we have $$p_{n+1,0}(t)=p_{n,0}(t)(t_n+t_{n+1})=p_{n,0}(t)t_n+p_{n,0}(t)t_{n+1}.$$ First, let us consider any $s\in \mathrm{supp}\mbox{ } p_{n,0}(t)t_n$. We have that the first $n-1$ entries of $s$ are going to satisfy the conditions of corollary \ref{c2}, the $n^{th}$ entry of $s$ is going to be either $1$ or $2$ (as the degree of $t_n$ for any term of $p_{n,0}(t)$ is $0$ or $1$), and that the $(n+1)^{th}$ entry of $s$ is $0$. Thus, $s$ is such a vector described in corollary \ref{c2}, meaning that $s\in S_{max}(BW_{n+1})$ and $\mathrm{supp}\mbox{ } p_{n,0}(t)t_n\subseteq S_{max}(BW_{n+1})$. Similarly, let's consider any $s\in \mathrm{supp}\mbox{ } p_{n,0}(t)t_{n+1}$. We then have that the first $n-1$ entries of $s$ satisfy the conditions of corollary \ref{c2}, the $n^{th}$ entry of $s$ is going to be either $0$ or $1$, and that the $(n+1)^{th}$ entry of $s$ is $1$. Thus, $s$ is such a vector described in corollary \ref{c2}, meaning that $s\in S_{max}(BW_{n+1})$ and $\mathrm{supp}\mbox{ } p_{n,0}(t)t_{n+1}\subseteq S_{max}(BW_{n+1})$. Thus, $\mathrm{supp}\mbox{ } p_{n+1,0}(t)=\mathrm{supp}\mbox{ } p_{n,0}(t)t_n\cup \mathrm{supp}\mbox{ } p_{n,0}(t)t_{n+1}\subseteq S_{max}(BW_{n+1})$. To show that our inclusion is actually an equality, let us assume our inclusion is strict and find a contradiction. If our inclusion is strict, then there exists an $s\in S_{max}(BW_{n+1})$ such that $s\notin \mathrm{supp}\mbox{ } p_{n+1,0}(t)$. We then have that $$m_s(D)p_{n+1,0}(t)=m_s(D)[p_{n,0}(t)(t_n+t_{n+1})]|_{t=0}=0.$$ Since we have that $m_s(D)p_{n,0}(t)|_{t=0}\neq 0$ by our induction hypothesis, we must have $s(n)=s(n+1)=0$. This means, however, that when expressing $s$ as stipulated in corollary \ref{c2}, $$s=e_1+\sum_{j=1}^{n}a_j,$$ we cannot have $a_n\in \{e_n,e_{n+1}\}$ as required. Thus, we have our contradiction and the equality desired. And so, in particular, we have that $|S_{max}(BW_n)|=2^{n-1}$.\\

Now, let us consider $p_{n,1}(t)$. We can see that $p_{n,1}(t)$ is a polynomial with $2\cdot 3^{n-1}$ terms by noting that $$p_{n,1}(t)=(1+t_1)\prod_{i=2}^{n} (1+t_{i-1}+t_i).$$

For $n=1$, we have that $p_{1,1}(t)=1+t_1$ and thus that $\mathrm{supp}\mbox{ } p_{1,1}(t)=\{(0), (1)\}$. Checking the definition of a parking function against each element of the support of $p_{1,1}(t)$, we can see that our only choice for $i$ and $j$ is $i=j=1$.  We can then see that $0<d(1,1,1)=2$ and $1<d(1,1,1)=2$; this shows us that $\mathrm{supp}\mbox{ } p_{1,1}(t)\subset S(BW_1)$. And as $|S(BW_1)|=2$, as the number of spanning trees of $SW_1$ is $2$, we have equality.\

Now, assuming $S(BW_k)=\mathrm{supp}\mbox{ } p_{k,1}(t)$ for $k\leq n$, let's consider $$p_{n+1,1}(t)=p_{n,1}(t)(1+t_n+t_{n+1})=p_{n,1}(t)+t_np_{n,1}(t)+t_{n+1}p_{n,1}(t),$$ a polynomial with at most $2\cdot 3^{n-1}$ terms; thus $|\mathrm{supp}\mbox{ } p_{n+1,1}(t)|\leq 2\cdot 3^{n-1}$. We have established in lemma \ref{l6} that if a vector $s$ is a parking function of $BW_{n+1}$ then $\prod_{k=i}^j s(k)\leq 2$ and $\prod_{k=1}^{n+1} s(k)\leq 1$ for every $1\leq i\leq j\leq n+1$. For $\mathrm{supp}\mbox{ } p_{n,1}(t)$, these conditions are met by our induction hypothesis. 

For $\mathrm{supp}\mbox{ } t_np_{n,1}(t)$, we also have that our conditions are met: $\prod_{k=1}^{n+1} s(k)=1\cdot \prod_{k=1}^{n} s(k) \leq 1$, and as $\prod_{k=i}^n s(k)\leq 1$ for every $1\leq i\leq j\leq n$, we have that $\prod_{k=i}^n s(k)\leq 2$ for every $1\leq i\leq j\leq n+1$ via our extra factor of $t_n$ in every monomial of $t_np_{n,1}(t)$. For $\mathrm{supp}\mbox{ } t_{n+1}p_{n,1}(t)$, we also have that our conditions are met, as adding a 1 to either of the products in question will not change their numerical value. As $\mathrm{supp}\mbox{ } p_{n+1,1}(t)=\mathrm{supp}\mbox{ } p_{n,1}(t)\cup \mathrm{supp}\mbox{ } t_np_{n,1}(t)\cup \mathrm{supp}\mbox{ } t_{n+1}p_{n,1}(t)$, we thus have that $\mathrm{supp}\mbox{ } p_{n+1,1}(t)\subseteq S(BW_{n+1})$.\

To prove that this inclusion is actually an equality, let us assume that the inclusion is strict and find a contradiction. If our inclusion is strict, then there exists an $s\in S(BW_{n+1})$ such that $s\notin \mathrm{supp}\mbox{ } p_{n+1,1}$. We then have that $$m_s(D)p_{n+1,1}|_{t=0}=m_s(D)[p_{n,1}(1+t_n+t_{n+1})]=0.$$ This would then imply that $m_s(D)p_{n,1}=0$, a contradiction to our induction hypothesis. Thus, we must have the equality desired. And so, in particular, we have that $|S(BW_{n+1})|\leq 2\cdot 3^{n-1}$.
\end{proof}

\begin{proposition}\label{p2}
Let $$p_{n,-}(t):=\prod_{i=1}^{n-1} (1+t_i).$$ Then $S_-(BW_n)=\mathrm{supp}\mbox{ } p_{n,-}(t)$ and $|S_-(BW_n)|=2^{n-1}$.
\end{proposition}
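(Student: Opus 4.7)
The plan is to establish the two inclusions between $S_-(BW_n)$ and $\mathrm{supp}\, p_{n,-}(t)$ and then read off the cardinality. Since $p_{n,-}(t) = \prod_{i=1}^{n-1}(1+t_i)$ is a product of $n-1$ binomials in disjoint variables, its expansion has exactly $2^{n-1}$ distinct square-free monomials, so $\mathrm{supp}\, p_{n,-}(t) = \{ s \in \mathbb{N}^n : s(k) \in \{0,1\} \text{ for } 1 \leq k \leq n-1, \text{ and } s(n) = 0\}$.

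The next step is a short case analysis of the out-degrees $d(i,k,j)$ in $BW_n$, paying attention to the doubled edge at vertex $1$. One finds $d(k,k,k)=3$ for $1\leq k<n$ and $d(n,n,n)=2$, while for every $i<j$ the edges from vertex $i$ to the root $0$ and to vertex $i-1$ both leave the block $[i:j]$, giving $d(i,i,j)=2$ (when $i=1$ the doubled edge to $0$ plays the role of both). These are all the out-degrees the argument will need.

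For the inclusion $S_-(BW_n) \subseteq \mathrm{supp}\, p_{n,-}(t)$, apply the definition of internal parking function with $i=j=k$: since $[k:k-1]=\emptyset$, the only surviving alternative is $s(k)<d(k,k,k)-1$, which forces $s(k)\leq 1$ for $k<n$ and $s(n)=0$. For the reverse inclusion, take any $s$ with $s(k)\in\{0,1\}$ for $k<n$ and $s(n)=0$. The diagonal pairs $i=j$ are immediate from the out-degree computations. For $i<j$, choose $k=i\in[i:j-1]$; then $s(i)\leq 1 < 2 = d(i,i,j)$, verifying the first alternative of the internal condition (and, a fortiori, showing that $s$ is already a parking function). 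The two inclusions together give $S_-(BW_n)=\mathrm{supp}\, p_{n,-}(t)$ and hence $|S_-(BW_n)|=2^{n-1}$.

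The only (minor) obstacle is keeping the out-degree bookkeeping correct around the boundary vertex $1$, where the doubled edge to the root must be absorbed into the count, and around the terminal vertex $n$, where $d(n,n,n)=2$ rather than $3$ accounts for the $s(n)=0$ constraint. Once these cases are handled, the rest of the argument is essentially forced by the definition of internal parking function and the diagonal test $i=j$.
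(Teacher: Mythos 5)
Your proof is correct, and it takes a genuinely different route from the paper's. The paper proves Proposition \ref{p2} by induction on $n$, splitting $p_{n+1,-}=p_{n,-}+p_{n,-}t_n$ and verifying the internal parking condition on each piece separately, then ruling out strict inclusion by a differentiation argument. Your approach instead characterizes $\mathrm{supp}\, p_{n,-}(t)$ explicitly as $\{s\in\{0,1\}^{n}: s(n)=0\}$ and checks the internal parking condition directly via a short out-degree case analysis: the diagonal tests $i=j$ force the constraints $s(k)\leq 1$ for $k<n$ and $s(n)=0$, and conversely the choice $k=i$ with $d(i,i,j)=2$ disposes of every off-diagonal case. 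This is essentially an unconditional version of the paper's later Corollary \ref{c3}, which states the same characterization but proves only one inclusion and invokes the $2^{n-1}$ cardinality count (coming from the Tutte polynomial via Proposition \ref{p4}); you prove both inclusions directly and hence do not need that cardinality input. The out-degree bookkeeping around vertices $1$ and $n$ is handled correctly, and the observation that the internal condition subsumes the parking condition (so that $s$ is automatically a parking function) is also correct. Your argument is more self-contained and arguably cleaner than the paper's two-step treatment across Proposition \ref{p2} and Corollary \ref{c3}.
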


\begin{proof}
Let us consider the polynomial $p_{n,-}(t):=\prod_{i=1}^{n-1} (1+t_i)$ and prove our proposition by induction on $n$. As always, let's first consider our base case, $n=1$. We then have that $p_{1,-}(t)=1$. The support of this polynomial is $\mathrm{supp}\mbox{ } p_{1,-}(t)=\{(0)\}$. Following the definition of an internal parking function, as our only choice for $i$ and $j$ is $i=j=1$, we have that $0<d(1,1,1)-1=2-1=1$. Thus, we have that $(0)$ is in $S_-(BW_1)$ and that $\mathrm{supp}\mbox{ } p_{1,-}(t)=S_-(BW_1)$.\

Now, let us assume that $S_-(BW_k)=\mathrm{supp}\mbox{ } p_{k,-}(t)$ for $k\leq n$ and show that this equality is also true for $k=n+1$. We have that $p_{n+1,-}(t):=\prod_{i=1}^{n} (1+t_i)=p_{n,-}(t)(1+t_n)=p_{n,-}(t)+p_{n,-}(t)t_n$. Thus, $\mathrm{supp}\mbox{ } p_{n+1,-}(t)=\mathrm{supp}\mbox{ } p_{n,-}(t)\cup \mathrm{supp}\mbox{ } p_{n,-}(t)t_n$, where $p_{n,1}(t)$ is considered as a polynomial in $n$ variables. If $s\in \mathrm{supp}\mbox{ } p_{n,-}(t)$, then we know that for every $k\in [i:j-1]$, $1\leq i\leq j\leq n$, we have that either $s(k)<d(i,k,j)$ or $s(j)<d(i,j,j)-1$. We also know that $s(n)=0$, meaning that these inequalities certainly still hold after we extend $1\leq i\leq j\leq n$ to $1\leq i\leq j\leq n+1$. Thus, we have that $\mathrm{supp}\mbox{ } p_{n,-}(t)\subseteq S(BW_{n+1})$. If $s\in \mathrm{supp}\mbox{ } p_{n,-}(t)t_n$, then we know that $s(n)=1$. As we know that for every $k\in [i:j-1]$, $1\leq i\leq j\leq n$, we have that either $s(k)<d(i,k,j)$ or $s(j)<d(i,j,j)-1$, we need to only check the cases when $i=j=n+1$ and when $j=n+1$ and $i<n+1$. For when $i=j=n+1$, we have that $s(n)=1<d(n,n,n)=3$. For $i<n+1$, we have that $s(n)=1<2\leq d(i,n,n)$. Thus, our conditions are satisfied and that $\mathrm{supp}\mbox{ } p_{n,-}(t)t_n\subseteq S(BW_{n+1})$. We know have that $\mathrm{supp}\mbox{ } p_{n+1,-}(t)\subseteq S(BW_{n+1})$. To prove that this inclusion is actually an equality, let us assume that the inclusion is strict and find a contradiction. If our inclusion is strict, then there exists an $s\in S(BW_{n+1})$ which is not in $\mathrm{supp}\mbox{ } p_{n+1,-}(t)$. We then must have that $$m_s(D) p_{n+1,-}(t)|_{t=0}=m_s(D)[p_{n,-}(t)(1+t_n)]|_{t=0}=0.$$ This would then mean that $m_s(D)p_{n,-}(t)|_{t=0}=0$, a contradiction to our induction hypothesis. Thus we must have equality. And in particular, we have $|S_-(BW_n)|=2^{n-1}$.
\end{proof}

Note that, while $q_n(t)$ and $p_{n,0}(t)$ are both homogeneous polynomials of degree $n$ in $n$ variables, their support is almost disjoint: $$\mathrm{supp}\mbox{ } p_{n,0}(t) \cap \mathrm{supp}\mbox{ } q_n(t) = \{(1,...,1)\}.$$ This observation is key to the proof of theorem \ref{t2}. As for the internal parking functions of $BW_n$, we have $$\left\vert{\{s\in S_-(BW_{n+1}):|s|=j\}}\right\vert={n-1 \choose j}, \mbox{ } 0\leq j\leq n-1.$$ This observation is key to the proof of theorem \ref{t1}.\

We will see that the zonotopal algebra of $BW_n$ hinges on the parking functions of $BW_n$. The Hilbert series presented in the next section, the monomial bases for the $\mathcal{P}$-central and $\mathcal{P}$-internal spaces, and the results connecting to the Stanley-Pitman polytope are all framed in terms of the parking functions of $BW_n$.

\section{The Zonotopal Algebra of the Broken Wheel Graph}\label{zonotopeBWG}

The zonotopal algebra of a graph consists of three pairs of polynomial spaces: a central pair, an internal pair, and an external pair. We will discuss the central and internal pairs of spaces for the broken wheel graph, and not the external pair as it does not play a role in our study. We will discuss the central Dahmen-Micchelli space $\mathcal{D}(X_n)$ of $BW_n$ and its dual, the $\mathcal{P}$-central space $\mathcal{P}(X_n)$. We will observe that $\mathcal{P}(X_n)$ is \textit{monomial}; i.e. has a monomial basis. Postnikov and Shapiro \cite{2003math......1110P} show that the monomial basis for $\mathcal{P}(X_n)$ must be given by the parking functions: $$\{m_s : s\in S(X_n)\}.$$ We will show that the volume polynomial $q_n(t)$ of the the Stanley-Pitman polytope lies in $\mathcal{D}(X_n)$, and that $q_n(t)$ is precisely the polynomial in a particular basis of $\mathcal{D}(X_n)$ which corresponds to the monomial $t_1t_2\cdots t_n$ in the monomial basis of $\mathcal{P}(X_n)$. Theorem \ref{t2} follows from this observation. We also show that once we reverse the order of the variables in $q_n(t)$, $\bar{q}_n(t_1,...,t_n):=q_n(t_n,...,t_1),$ the polynomial $\bar{q}_n(t)$ lies in the internal zonotopal space $\mathcal{D}_-(X_{n+1})$. At the same time, the internal zonotopal space $\mathcal{P}_-(X_{n+1})$ is monomial, with its monomial basis necessarily determined by the internal parking functions $$\{m_s : s\in S_-(X_{n+1})\}.$$ Theorem \ref{t1} follows from this observation. But in order to define and discuss these spaces in detail, we must first discuss the Tutte polynomial and Hilbert series of the broken wheel graph.

\subsection{The Tutte Polynomial and Hilbert Series of the Broken Wheel Graph}

Let $X$ be the corresponding matrix of a graph. Recall that the collection of its spanning trees $\mathbb{B}(X)$ correspond to the $n\times n$ invertible submatrices of $X$. We now define two valuations on the set $\mathbb{B}(X)$ that are the reversal of the external activity and internal activity as defined by Tutte.\

Both valuations require an ordering on $X$, we use the above-defined order $\prec: x_i \prec x_j$ if and only if $i<j$. Given $B\in \mathbb{B}(X)$, its \Dfn{valuation} is defined by $$val(B):=\left\vert{\{x\in (X\backslash B) : \{x\}\cup \{b\in B : b\prec x\} \mbox{ is independent } (\mbox{in } \mathbb{R}^n)\}}\right\vert.$$ Its \Dfn{dual valuation} is then defined as $$val^*(B):=\left\vert{\{b\in B : \{B\backslash b\}\cup\{x\in X\backslash B : b\prec x\} \mbox{ spans } \mathbb{R}^n\}}\right\vert.$$ The \Dfn{Tutte polynomial} is defined as the following bivariate polynomial, in the variables $s$ and $t$: $$T_{X}(s,t):= \sum_{B\in \mathbb{B}(X)} s^{n-val(B)}t^{n-val^*(B)}.$$

\begin{proposition}\label{p3}
The Tutte polynomial $T_{X_n}(s,t)$ of the broken wheel graph $BW_n$ is symmetric: $$T_{X_n}(s,t)=T_{X_n}(t,s).$$
\end{proposition}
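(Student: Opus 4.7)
The plan is to deduce the symmetry of $T_{X_n}$ from self-duality of the cycle matroid $M(BW_n)$. The key tool is the classical identity
\[
T_{M^*}(s,t) = T_M(t,s),
\]
which follows from the bijection $B \mapsto X \setminus B$ between bases of $M$ and of $M^*$: this bijection exchanges fundamental circuits with fundamental cocircuits and hence interchanges external and internal activities. Since the $val$ and $val^*$ introduced in this section are reversals of the standard external and internal activities, $T_{X_n}$ agrees with the standard Tutte polynomial of $M(BW_n)$ up to swapping $s$ and $t$; therefore, proving $T_{X_n}(s,t) = T_{X_n}(t,s)$ reduces to showing that $M(BW_n)$ is isomorphic to its matroid dual.

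To establish this self-duality I would work planarly. The graph $BW_n$ is planar with the embedding of Figure \ref{BW4}, and Euler's formula gives $n+1$ faces: the bigon bounded by the parallel edges $\{x_1, x_2\}$, the $n-1$ triangles with vertex sets $\{0, i, i+1\}$ and edges $\{x_{2i}, x_{2i+1}, x_{2i+2}\}$ for $1 \leq i \leq n-1$, and the outer face bounded by $x_1$, the path $x_3, x_5, \dots, x_{2n-1}$, and $x_{2n}$. Recording for each edge the two faces it borders produces the planar dual $BW_n^*$. Writing $F_\infty$ for the outer dual vertex, $F_0$ for the bigon, and $F_i$ for the triangle on $\{0, i, i+1\}$, one finds that $F_\infty$ is joined to $F_0$ by a single edge (from $x_1$) and to each $F_i$ by a single edge (from $x_{2i+1}$), that the rim vertices form a path $F_0 - F_1 - \cdots - F_{n-1}$ along $x_2, x_4, \dots, x_{2n-2}$, and that $x_{2n}$ provides a second edge between $F_\infty$ and $F_{n-1}$, producing a doubled edge at the far end of the rim.

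This dual graph is manifestly isomorphic to $BW_n$: identify $F_\infty$ with the center $0$ and reverse the rim via $F_{n-1} \mapsto 1,\ F_{n-2} \mapsto 2,\ \dots,\ F_0 \mapsto n$. Under this map the doubled edge at $F_{n-1}$ becomes the doubled edge at vertex $1$, each radius and each rim edge is accounted for, and the total count of $2n$ edges matches. By Whitney's theorem on planar duals this gives $M(BW_n) \cong M(BW_n)^*$, which combined with the duality identity above yields the claimed symmetry $T_{X_n}(s,t) = T_{X_n}(t,s)$. The main obstacle is purely combinatorial bookkeeping: one must verify that the doubled edge in the planar dual lands at the end of the rim farthest from the original doubled edge, so that the rim-reversal isomorphism actually works; the explicit face-edge incidences recorded above settle this.
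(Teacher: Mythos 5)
Your proof is correct, but it establishes the key lemma — self-duality of the cycle matroid $M(BW_n)$ — by a genuinely different route from the paper. Both you and the author reduce the claim to $M(BW_n) \cong M(BW_n)^*$ plus the standard identity $T_{M^*}(s,t) = T_M(t,s)$, but the mechanisms for self-duality differ. The paper works algebraically: it realizes $X_n$ as the odd-indexed rows of a banded Toeplitz matrix $A$ (each row a shift of $(\dots,0,1,1,-1,0,\dots)$), lets $Y_n$ be the complementary even-indexed rows, observes that rows at odd distance are orthogonal so $Y_n$ represents $M(BW_n)^*$, and simultaneously that $Y_n$ differs from $X_n$ only by column sign changes and column reversal, so $Y_n$ also represents $M(BW_n)$. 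You instead work with the planar embedding of Figure~\ref{BW4}, explicitly enumerate the $n+1$ faces (bigon, $n-1$ triangles, outer face), read off the face--edge incidences to construct the planar dual, exhibit the rim-reversal isomorphism $F_\infty \mapsto 0$, $F_{n-1-i}\mapsto i+1$ onto $BW_n$, and then invoke Whitney's theorem that the planar dual realizes the dual matroid. Your face bookkeeping and the resulting dual graph check out (the doubled edge does land at $F_{n-1}$, and degree counts match $BW_n$), and you are also careful to note that because the paper's $val$, $val^*$ are reversals of Tutte's activities, the paper's $T_{X_n}$ is the standard Tutte polynomial with arguments swapped, so its symmetry is unaffected. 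Tradeoffs: your argument is more explicit and visual and needs Whitney's planar-dual theorem, while the paper's is terser but asks the reader to take on faith a slightly mislabeled Toeplitz construction; yours is arguably the easier one to verify line by line.
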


\begin{proof}
Let $A$ be the $2n\times \mathbb{Z}$ matrix whose first row has entries $$a(1,u):=\left\{\begin{array}{cc}1 & j=1,2 \\-1 & j=3 \\0 & \mbox{Otherwise}\end{array}\right.$$ and whose entries are $a(i,j):=a(1,i+j-1)$ everywhere else. Note that each even row of this matrix is orthogonal to all the odd rows. We can see that $X_n$ is the submatrix of $A$ that corresponds to the rows indexed by $1,3,...,2n-1$ and columns $1,...,2n$. Let $Y_n$ be the matrix which has the same columns as $X_n$, but the complementary set of rows. The rows of $Y_n$ are still orthogonal to entries of $X_n$. Moreover, the matrix $Y_n$ is obtained from $X_n$ by performing the following operations: 
\begin{description}
\item[(i)] Multiply by -1 each odd column $x_{2i-1}$,
\item[(ii)] Reverse the order of the columns.
\end{description}
Thus $Y_n$ represents the same graph as $X_n$, with respect to a reverse order of the edges. Since the Tutte polynomial is invariant to the ordering of the edges, $T_{X_n}(s,t)=T_{Y_n}(s,t)$. On the other hand, since the row span of $Y_n$ is orthogonal to the row span of $X_n$, $Y_n$ is isomorphic to the dual matroid of $X_n$. It is further known that for every matroid $X$ with dual $\hat{X}$, $T_{X}(s,t)=T_{\hat{X}}(t,s)$. And so we have $$Y_{X_n}(s,t)=T_{Y_n}(s,t)=T_{\hat{X_n}}(s,t)=T_{X_n}(t,s).$$
\end{proof}

A spanning tree $B\in \mathbb{B}(X)$ is called \Dfn{internal} if $val^*(B)=n$ and \Dfn{maximal} if $val(B)=n$. Note that the number of internal trees of a graph $X$ equals $T_X(1,0)$, while the number of maximal trees equals $T_X(0,1)$.

\begin{proposition}\label{p4}
The Tutte polynomial of the broken wheel $BW_n$ satisfies $$T_{X_n}(1,0)=T_{X_n}(0,1)=2^{n-1}.$$
\end{proposition}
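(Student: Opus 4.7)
The plan is to combine Proposition \ref{p3} with the explicit count from Proposition \ref{p1}. By Proposition \ref{p3} the Tutte polynomial $T_{X_n}(s,t)$ is symmetric, so $T_{X_n}(1,0)=T_{X_n}(0,1)$ automatically, and it suffices to evaluate just one of the two. Unwinding the definition,
\[
T_{X_n}(0,1)=\#\{B\in\mathbb{B}(X_n):\operatorname{val}(B)=n\}
\]
counts the maximal spanning trees of $BW_n$: those $B$ such that every non-tree edge $x\in X_n\setminus B$ has some $b\in B$ with $b\succ x$ in its fundamental cycle $C_x$, equivalently, those $B$ that contain the largest edge of every fundamental cycle.

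I would exhibit a bijection between maximal spanning trees of $BW_n$ and the set $S_{\max}(BW_n)$ of maximal parking functions, invoking $|S_{\max}(BW_n)|=2^{n-1}$ from Proposition \ref{p1}. Group the edges by upper endpoint, setting $E_i:=\{x_{2i-1},x_{2i}\}$ for $i\in[1:n]$: so $E_1$ is the pair of parallel edges at vertex $1$, while for $i\geq 2$, $E_i$ consists of the polygon edge $(i-1,i)$ and the radius $(0,i)$, with the larger of the two always being $x_{2i}$. Define
\[
\Phi(B)(i):=|E_i\setminus B|,\qquad i=1,\ldots,n.
\]
Since $|X_n\setminus B|=n$, the vector $\Phi(B)\in\mathbb{N}^n$ automatically satisfies $|\Phi(B)|=n$. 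The task of showing $\Phi(B)\in S_{\max}(BW_n)$ reduces to extracting the parameterization $\Phi(B)=e_1+\sum_{j=1}^{n-1}a_j$ with $a_j\in\{e_j,e_{j+1}\}$ of Corollary \ref{c2} from the maximality of $B$: because the digon $\{x_1,x_2\}$ and each triangle $\{x_{2i-2},x_{2i-1},x_{2i}\}$ ($i\geq 2$) have distinguished largest element $x_2$ and $x_{2i}$ respectively, the maximality condition rules out configurations of $\Phi(B)$ inconsistent with that parameterization.

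The main obstacle is proving injectivity of $\Phi$: when $\Phi(B)(i)=1$, one must show that the choice of which of $x_{2i-1},x_{2i}$ lies in $B$ is forced by the spanning-tree property together with maximality. The starting point is the observation that $x_{2n}$ --- being the largest edge overall --- must always lie in $B$, since otherwise its own fundamental cycle would have $x_{2n}$ as its largest element. An induction on $i$, propagating this constraint down the graph and using that any alternative choice either produces a non-spanning forest (vertex $i$ becomes unreachable) or forces $x_{2i}$ to be the largest element of its own fundamental cycle, reconstructs $B$ uniquely from $\Phi(B)$. Once the bijection is established, Proposition \ref{p1} delivers $T_{X_n}(0,1)=|S_{\max}(BW_n)|=2^{n-1}$, and combined with the symmetry from Proposition \ref{p3} this completes the proof.
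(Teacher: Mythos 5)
There is a genuine circularity problem in your plan. You propose to deduce $T_{X_n}(0,1)=2^{n-1}$ by exhibiting a bijection from $\mathbb{B}_{\max}(X_n)$ onto $S_{\max}(BW_n)$ and then \emph{invoking} $|S_{\max}(BW_n)|=2^{n-1}$ from Proposition~\ref{p1}. But trace the paper's logical dependencies: the proof of Proposition~\ref{p1} relies on Corollary~\ref{c2} (both the base case and the inductive step cite it explicitly), and the proof of Corollary~\ref{c2} opens with ``From proposition~\ref{p4}, we know that the number of parking functions $s$ with $|s|=n$ is $h_n(n)=2^{n-1}$.'' So Corollary~\ref{c2} needs Proposition~\ref{p4}, Proposition~\ref{p1} needs Corollary~\ref{c2}, and your proof of Proposition~\ref{p4} needs Proposition~\ref{p1} --- a closed loop. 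You cannot use Proposition~\ref{p1} (nor the characterization in Corollary~\ref{c2}, which you also lean on to describe $S_{\max}$) as an input here.

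The paper's own proof sidesteps parking functions entirely: it bounds $|\mathbb{B}_{\max}(X_n)|\leq 2^{n-1}$ from above by showing every maximal tree forces $x_1\notin B$, $x_{2n}\in B$, and that $\{x_{2i},x_{2i+1}\}\not\subset B$ (because $x_{2i}+x_{2i+1}=x_{2i+2}$), then bounds $|\mathbb{B}_-(X_n)|\geq 2^{n-1}$ from below by exhibiting the product set $\times_{i=1}^{n-1}\{x_{2i-1},x_{2i}\}\cup\{x_{2n-1}\}$ inside $\mathbb{B}_-(X_n)$, and finally squeezes both to equality using the symmetry of Proposition~\ref{p3}. Your map $\Phi(B)(i)=|E_i\setminus B|$ is actually a nice idea and can be made to work, but to avoid the circle you would need to pair the injectivity of $\Phi$ with an independent upper bound on the image --- for example, showing directly that $\Phi(B)$ always lies in the explicit $2^{n-1}$-element set $N=\{e_1+\sum_{j<n}a_j : a_j\in\{e_j,e_{j+1}\}\}$ using only the structure of $B$ (never the unproved identification $N=S_{\max}$), and then combining with a lower bound as the paper does. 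As it stands, the injectivity argument is also only sketched (``an induction on $i$, propagating this constraint down the graph''), so even setting aside the circularity, the heart of the bijection is not yet established.
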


\begin{proof}
Let's consider first the set $\mathbb{B}_{max}(X_n)$ of maximal trees. For every $B\in \mathbb{B}_{max}(X_n)$, it follows directly from the definition that $x_1\notin B$, while $x_{2n}\in B$. After removing $x_{2n}$ from each maximal basis, we obtain a modified set $\mathbb{B}'_{max}(X_n)$. It is impossible that $\{x_{2i},x_{2i+1}\}\subset B$ for some $1\leq i< n$, since $x_{2i}+x_{2i+1}=x_{2i+2}$. Thus, we have $$\mathbb{B}'_{max}(X_n)\subset \times_{i=1}^{n-1}\{x_{2i},x_{2i+1}\},$$ and in particular $$\left\vert{\mathbb{B}_{max}(X_n)}\right\vert\leq 2^{n-1}.$$ Consider now the set $\mathbb{B}_-(X_n)$ of internal trees. The definition of an internal tree implies directly that $x_{2n}\notin B$, hence that $x_{2n-1}\in B$, for every internal basis. Removing $x_{2n-1}$ from each internal basis, we obtain the set $\mathbb{B}'_-(X_n)$. Now, consider the cross product $$A:=\times_{i=1}^{n-1}\{x_{2i-1},x_{2i}\}.$$ If we append $x_{2n-1}$ to any set in $A$, we obtain a basis $B\in \mathbb{B}(X_n)$; by induction on $j$, this follows from the assertion that every forest in $\times_{i=1}^j \{x_{2i-1},x_{2i}\}$, $1\leq j\leq n-1$ is a spanning tree on the subgraph that corresponds to the vertices $0,...,j$. 

Now, let $B=(b_1\prec b_2\prec\cdots \prec b_{n-1})$ be a tree in $A$. If $b_i=x_{2i-1}$, then $B\backslash b_i$ is completed to a spanning tree by $x_{2i}$, since $(b_1,...,b_{i-1})$ connects the vertices $0,...,i-1$ and each of $x_{2i-1}$ and $x_{2i}$ connects this vertex set to the vertex $i$. 

If $b_i=x_{2i}$, since $J:=(b_1,...,b_{i-1})$ is a spanning tree of $0,...,i-1$, the union $I\cup J\cup \{x_{2n-1}\}$ is full-rank, and hence $b_i$ is not internally active in $B$. Thus, $A\subset \mathbb{B}'_-(X),$ and $\left\vert{\mathbb{B}_-(X_n)}\right\vert\geq 2^{n-1}$. This completes the proof, since the symmetry of the Tutte polynomial implies that $\left\vert{\mathbb{B}_{max}(X_n)}\right\vert=\left\vert{\mathbb{B}_-(X_n)}\right\vert$.
\end{proof}

\begin{corollary}\label{c1}
We have that $$\mathbb{B}_{max}(X_n)=\times_{i=1}^{n-1}\{x_{2i},x_{2i+1}\}\times\{x_{2n}\},$$ and $$\mathbb{B}_-(X_n)=\times_{i=1}^{n-1}\{x_{2i-1},x_{2i}\}\times\{x_{2n-1}\}.$$
\end{corollary}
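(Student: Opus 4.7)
The plan is to observe that all of the work is already done inside the proof of Proposition \ref{p4}, and that Corollary \ref{c1} simply records the set-theoretic equalities that are implicit there. I would organize the argument as a two-step chain of inclusion plus cardinality.

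For the maximal trees, the proof of Proposition \ref{p4} establishes that any $B \in \mathbb{B}_{max}(X_n)$ must contain $x_{2n}$ and exclude $x_1$, and that after deleting $x_{2n}$ the remaining set $B' := B \setminus \{x_{2n}\}$ cannot contain both $x_{2i}$ and $x_{2i+1}$ (owing to the dependence $x_{2i}+x_{2i+1}=x_{2i+2}$). This gives the one-sided inclusion
\[
\mathbb{B}_{max}(X_n) \subseteq \Big(\times_{i=1}^{n-1}\{x_{2i},x_{2i+1}\}\Big)\times\{x_{2n}\}.
\]
Symmetrically, for the internal trees, the same proof argues that $x_{2n}\notin B$ and $x_{2n-1}\in B$, and then shows, by the inductive forest-to-spanning-tree check on the subgraph carried by the vertices $0,\dots,j$, that every choice from the Cartesian product, together with $x_{2n-1}$ appended, yields an internal basis. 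This produces the reverse inclusion
\[
\Big(\times_{i=1}^{n-1}\{x_{2i-1},x_{2i}\}\Big)\times\{x_{2n-1}\} \subseteq \mathbb{B}_-(X_n).
\]

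To upgrade each inclusion to an equality, I would invoke the cardinality information obtained in Proposition \ref{p4}, namely $|\mathbb{B}_{max}(X_n)|=|\mathbb{B}_-(X_n)|=2^{n-1}$ (itself a consequence of the Tutte symmetry established in Proposition \ref{p3}). Since each of the two Cartesian products has cardinality $2^{n-1}$, the one-sided inclusions force equality of the two sides on the nose. So no new combinatorial reasoning is required; the corollary is a bookkeeping consequence.

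The only mild subtlety, and the step I would be careful to state cleanly, is that the inductive assertion used for the internal case, that every forest in $\times_{i=1}^{j}\{x_{2i-1},x_{2i}\}$ spans the subgraph on vertices $0,\dots,j$, has to be invoked to know that appending $x_{2n-1}$ to an arbitrary element of $\times_{i=1}^{n-1}\{x_{2i-1},x_{2i}\}$ produces a spanning tree in the first place (so the candidate element indeed lies in $\mathbb{B}(X_n)$). This is already proved inside Proposition \ref{p4}, so I would just cite it rather than redo it. Beyond that, the corollary is essentially a restatement and there is no real obstacle.
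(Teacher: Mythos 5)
Your proposal is correct and matches the paper's intent: the corollary has no separate proof in the paper precisely because the proof of Proposition \ref{p4} already establishes the inclusion $\mathbb{B}_{max}(X_n)\subseteq \times_{i=1}^{n-1}\{x_{2i},x_{2i+1}\}\times\{x_{2n}\}$ in one direction and $\times_{i=1}^{n-1}\{x_{2i-1},x_{2i}\}\times\{x_{2n-1}\}\subseteq\mathbb{B}_-(X_n)$ in the other, after which the cardinality count $|\mathbb{B}_{max}(X_n)|=|\mathbb{B}_-(X_n)|=2^{n-1}$ (via the Tutte symmetry of Proposition \ref{p3}) forces both to be equalities. You also correctly flag the one non-trivial ingredient, namely the inductive verification that appending $x_{2n-1}$ to a member of the cross product yields a genuine spanning tree, which is already handled inside the proof of Proposition \ref{p4}.
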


It is known \cite{2003math......1110P} that the number of parking functions of any graph $G$ equals the number of spanning threes of that graph: $$\left\vert{S(G)}\right\vert = \left\vert{\mathbb{B}(X_n)}\right\vert.$$ The \Dfn{central Hilbert series} $h_n := h_{X_n}$ is defined as $$h_n(j):=\left\vert{\{B\in \mathbb{X} : \mbox{val}(B)=j\}}\right\vert.$$ The Tutte polynomial determines $h_n$; i.e $h_n$ records the coefficients of $T_n(t,1)$ (in reverse enumerations). Note that proposition \ref{p4} asserts thus that $h_n(n)=2^{n-1}.$ Parking functions could also be used to determine $h_n$:

\begin{proposition}[Holtz and Ron, \cite{parkingfunctions}]\label{r2}
For each $0\leq j\leq n$, $$h_n(j)=\left\vert{\{s\in S(BW_n) : |s|:=\sum_{i=1}^n s(i)=j\}}\right\vert.$$ Thus there must be exactly $2^{n-1}$ parking functions with $|s|=n$.
\end{proposition}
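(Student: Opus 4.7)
The plan is to interpret both sides of the claimed identity as the dimension of a single graded vector space, namely the degree-$j$ piece of the central $\mathcal{P}$-space $\mathcal{P}(X_n)$ of the broken wheel.

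First, I would invoke Proposition \ref{p1} to identify the set $\{s \in S(BW_n) : |s| = j\}$ with the set of distinct monomials of total degree $j$ in the product $p_{n,1}(t) = \prod_{i=1}^n (1 + t_{i-1} + t_i)$. Since $m_s$ is homogeneous of degree $|s|$, the right-hand side of the claim is exactly the number of degree-$j$ monomials in $\mathrm{supp}\, p_{n,1}(t)$.

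Second, by the monomial-basis theorem of Postnikov and Shapiro \cite{2003math......1110P}, the parking-function monomials $\{m_s : s \in S(BW_n)\}$ form a homogeneous basis of $\mathcal{P}(X_n)$, so $\dim \mathcal{P}(X_n)_j$ equals the right-hand side of the claim. On the other hand, the Holtz--Ron zonotopal framework \cite{HR} records the Hilbert series of $\mathcal{P}(X_n)$ via the Tutte polynomial: $\dim \mathcal{P}(X_n)_j = [s^{n-j}] T_{X_n}(s,1)$, which by the discussion immediately preceding the proposition is precisely $h_n(j)$. Equating the two expressions for $\dim \mathcal{P}(X_n)_j$ gives the identity, and the assertion $|\{s : |s|=n\}| = 2^{n-1}$ then follows from Proposition \ref{p4}.

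The main obstacle, if one wishes a self-contained argument rather than quoting the general frameworks, is to establish the Hilbert-series identity $\dim \mathcal{P}(X_n)_j = h_n(j)$ directly for the broken wheel. I would attempt this by simultaneous induction on $n$: on the parking-function side, use the recursion $p_{n+1,1}(t) = (1 + t_n + t_{n+1}) p_{n,1}(t)$ from the proof of Proposition \ref{p1}, which partitions $\mathrm{supp}\, p_{n+1,1}(t)$ into three natural pieces; on the basis side, use deletion--contraction of $T_{X_{n+1}}(s,1)$ at the two edges $x_{2n+1}, x_{2n+2}$ incident to the new vertex $n+1$, which splits $\mathbb{B}(X_{n+1})$ according to which of these edges are used. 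The key technical point is to verify that adding the spoke $x_{2n+2}$ to a basis of $BW_n$ raises $\mathrm{val}$ by $1$ while adding the rim $x_{2n+1}$ leaves $\mathrm{val}$ unchanged, so that the three-way decomposition of bases matches the three summands $p_{n,1}(t)$, $t_n p_{n,1}(t)$, $t_{n+1} p_{n,1}(t)$ degree by degree.
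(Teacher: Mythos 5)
The paper does not prove Proposition \ref{r2}: it is attributed to Holtz and Ron \cite{parkingfunctions} and used as a black box, so there is no internal proof to compare against. As a stand-alone argument, your second paragraph is circular within this paper's logic: you rely on the parking-function monomials forming a basis of $\mathcal{P}(X_n)$, which is Theorem~\ref{t4}, but the proof of \ref{t4} checks that these monomials lie in $\mathcal{P}(X_n)$ and then invokes Proposition~\ref{r3} to conclude --- and passing from ``at most $h_n(j)$ independent parking monomials of degree $j$'' to ``exactly $h_n(j)$'' is precisely Proposition~\ref{r2}. (Your first paragraph's appeal to Proposition~\ref{p1} is similarly circular: \ref{p1} invokes Corollary~\ref{c2}, whose proof opens by citing $h_n(n)=2^{n-1}$, i.e., \ref{r2} together with \ref{p4}.) A non-circular version of your idea does exist: verify directly that each $m_s$, $s\in S(BW_n)$, lies in $\mathcal{P}(X_n)$ using only Lemma~\ref{l6} and the defining operators, which gives $|\{s : |s|=j\}|\le h_n(j)$ by \ref{r3}; then the ungraded count $|S(BW_n)| = |\mathbb{B}(X_n)| = \sum_j h_n(j)$ (Postnikov--Shapiro's tree/parking-function equinumerosity) forces equality in every degree. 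You should spell that out rather than asserting a ``monomial-basis theorem,'' because Postnikov and Shapiro's basis result is for the Artinian quotient by the monomial ideal $\mathcal{I}_G$; what they give you about $\mathcal{P}(X)$ is only the uniqueness statement that \emph{if} it is monomial the basis must consist of the parking-function monomials.

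Your third paragraph is a reasonable start on a genuinely self-contained inductive proof, but it only supplies the valuation bookkeeping for two of the three cases in the ``three-way decomposition'' you yourself name: adding the spoke $x_{2n+2}$ alone, or the rim $x_{2n+1}$ alone, to a basis of $BW_n$. A spanning tree of $BW_{n+1}$ can contain both $x_{2n+1}$ and $x_{2n+2}$ --- with $n=1$, the tree $\{x_3,x_4\}$ does --- and then removing both leaves an $(n-1)$-element set that is not a basis of $BW_n$, so this branch must be handled by contraction rather than deletion, and the corresponding $\mathrm{val}$ increment needs a separate argument you have not given.
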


\begin{corollary}\label{c2}
The maximal parking functions $S_{max}(BW_n)$ are exactly the subset $N$ of $\mathbb{N}^n$ of all sequences $s$ that can be written as a sum $$s=e_1+\sum_{j=1}^{n-1} a_j,$$ with $(e_i)_{i=1}^n$ the standard basis for $\mathbb{N}^n$, and $a_j\in \{e_j,e_{j+1}\}$ for every $j$.
\end{corollary}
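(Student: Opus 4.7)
The plan is to establish $S_{max}(BW_n) = N$ in two stages: first prove the inclusion $N \subseteq S_{max}(BW_n)$, then conclude equality by a cardinality match, using $|S_{max}(BW_n)| = 2^{n-1}$ from Proposition \ref{r2} together with $|N| = 2^{n-1}$.

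The equality $|N| = 2^{n-1}$ holds because distinct choice sequences $(a_1, \dots, a_{n-1}) \in \prod_{j=1}^{n-1}\{e_j, e_{j+1}\}$ produce distinct sums $s = e_1 + \sum_j a_j$. I would argue by induction on $n$: the coordinate $s(n)$ equals $1$ or $0$ according to whether $a_{n-1} = e_n$ or $a_{n-1} = e_{n-1}$, so $s(n)$ determines $a_{n-1}$, and subtracting its contribution from $s$ gives a vector whose first $n-1$ coordinates come, by the induction hypothesis, from a unique sequence $(a_1, \dots, a_{n-2})$. For the inclusion $N \subseteq S_{max}(BW_n)$, observe that every $s \in N$ has $|s| = 1 + (n-1) = n$, so it suffices to verify the parking-function condition $s \in S(BW_n)$. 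Using the explicit formula $s(k) = [k=1] + [a_{k-1}=e_k] + [a_k=e_k]$ (with the convention $a_0 = a_n = 0$), one obtains the uniform bounds $s(1) \leq 2$, $s(n) \leq 1$, and $s(k) \leq 2$ for $1 < k < n$, which immediately discharge every singleton case $i = j = k$, since the relevant out-degrees satisfy $d(1,1,1) \geq 2$, $d(k,k,k) = 3$ for $1 < k < n$, and $d(n,n,n) = 2$.

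The main work is verifying the parking-function condition on intervals $[i:j]$ with $i < j$, which I would handle by contradiction. If no $k \in [i:j]$ witnesses $s(k) < d(i,k,j)$, then the relevant out-degrees ($d(i,i,j) = 2$, $d(i,k,j) = 1$ for $i < k < j$, $d(i,j,j) = 2$ when $j < n$, $d(i,n,n) = 1$) force $s(i) = 2$, $s(k) \geq 1$ for $i < k < j$, and either $s(j) = 2$ (when $j < n$) or $s(n) = 1$ (when $j = n$). From $s(i) = 2$ one reads off $a_i = e_i$; applying $s(i+1) \geq 1$ then forces $a_{i+1} = e_{i+1}$, and iterating gives $a_\ell = e_\ell$ for $\ell = i, i+1, \dots, j-1$. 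This contradicts the right-endpoint condition, which requires $a_{j-1} = e_j$ (whether coming from $s(j) = 2$ or from $s(n) = 1$). The main obstacle is bookkeeping rather than conceptual: the boundary cases $i = 1$, $j = n$, and $j = i+1$ each require a small variant of the same telescoping chain, and I would treat them individually to avoid off-by-one errors.
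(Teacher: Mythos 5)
Your proof is correct and follows essentially the same route as the paper's: fix the cardinality $|N| = 2^{n-1}$, invoke $|S_{max}(BW_n)| = h_n(n) = 2^{n-1}$ from Propositions \ref{p4} and \ref{r2} to reduce to the inclusion $N \subseteq S_{max}(BW_n)$, and then verify the parking-function condition directly. The paper's verification is phrased as structural facts about where 2-entries force subsequent 0-entries (implicitly invoking the converse of Lemma \ref{l6}), whereas you argue interval-by-interval via contradiction, deriving the forced chain $a_\ell = e_\ell$ and breaking it at the right endpoint; these are two bookkeepings of the same observation, and your version has the minor virtue of also spelling out why $|N| = 2^{n-1}$, which the paper asserts without proof.
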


\begin{proof}
From proposition \ref{p4}, we know that the number of parking functions $s$ with $|s|=n$ is $h_n(n)=2^{n-1}$. Since $\left\vert{N}\right\vert=2^{n-1}$, we merely need to verify that $N\subset S_{max}(BW_n)$. The fact that $N\subset \{0,1,2\}^n$ is clear, and so is the fact that $s(n)\leq 1$ for $s\in N$. Now, suppose that $s\in N$ and $s(j)=2$. Then $a_j=e_j$, and hence $\sum_{i=j+1}^{n-1} s(i)=\left\vert{[j+1 : n-1]}\right\vert<n-j$, which means that $s(k)=0$ for some $k>i$. Finally, if $s(j)=s(i)=2$ for some $j<i<n$, then $a_j=e_j$, while $a_{i-1}=e_i$. Hence $\sum_{k=j+1}^{i-1} s(k) =\left\vert{[j+1 : i-2]}\right\vert<j-i-1$, meaning that $s$ must vanish in between $j$ and $i$. Thus, $s$ is a parking function, and our claim follows.
\end{proof}

\begin{example}\label{e2}
The maximal parking functions of $BW_3$ are $$e_1+e_1+e_2=(2,1,0), e_1+e_1+e_3=(2,0,1), e_1+e_2+e_2=(1,2,0), \mbox{ and } e_1+e_2+e_3=(1,1,1).$$ 
\end{example}

Recall the set $\mathbb{B}_-(X_n)$ of internal bases. When restricting the valuation function to the internal bases, we obtain the \Dfn{internal Hilbert series} $$h_{n,-}(j):=\left\vert{\{B\in \mathbb{B}_-(X_n) : \mbox{val}(B)=j\}}\right\vert.$$ This function is also recorded by the Tutte polynomial and it is completely computable via the \textit{internal parking functions} of $BW_n$. It is known that the cardinality of the set of internal parking functions agrees with the number of internal bases, hence $$\left\vert{S_{-}(BW_n)}\right\vert=2^{n-1}.$$ More concretely,

\begin{corollary}\label{c3}
We have that $S_{-}(BW_n)=\{s\in \{0,1\}^n : s(n)=0\}.$
\end{corollary}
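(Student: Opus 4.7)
The plan is to read the claim directly off Proposition \ref{p2}, which identifies $S_-(BW_n)$ with the monomial support of the polynomial $p_{n,-}(t) = \prod_{i=1}^{n-1}(1+t_i)$. Once this identification is invoked, the statement reduces to computing the support of a product of $n-1$ binomials, which is elementary.

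First I would expand $p_{n,-}(t)$ as a polynomial in $t_1,\dots,t_n$. Each factor $(1+t_i)$ contributes either the constant $1$ or the monomial $t_i$; distributing gives
\[
p_{n,-}(t)=\sum_{A\subseteq [1:n-1]}\,\prod_{i\in A} t_i,
\]
a sum of $2^{n-1}$ distinct squarefree monomials, one for each subset $A\subseteq [1:n-1]$. Crucially the variable $t_n$ never appears, because the product only ranges over $i=1,\dots,n-1$.

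Reading off the exponent vectors of these monomials, $\mathrm{supp}\, p_{n,-}(t)$ consists exactly of those $s\in \mathbb{N}^n$ such that $s(i)\in\{0,1\}$ for $i\in[1:n-1]$ and $s(n)=0$; that is,
\[
\mathrm{supp}\, p_{n,-}(t)=\{s\in\{0,1\}^n : s(n)=0\}.
\]
Combining this with the equality $S_-(BW_n)=\mathrm{supp}\, p_{n,-}(t)$ from Proposition \ref{p2} yields the desired description of $S_-(BW_n)$. There is no real obstacle: the entire content of the corollary has already been absorbed into Proposition \ref{p2}, and this proof is simply a matter of unpacking the explicit form of $p_{n,-}(t)$. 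As a sanity check, the cardinality $2^{n-1}$ of the set described matches the count $|S_-(BW_n)|=2^{n-1}$ already established in Proposition \ref{p2}, confirming the equality rather than merely the inclusion.
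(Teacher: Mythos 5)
Your proof is correct, and it takes a genuinely shorter route than the paper's. The paper does not invoke Proposition~\ref{p2} here; instead it argues directly from the definition of an internal parking function, observing that $d(n,n,n)=2$ forces $s(n)=0$ and $d(i,i,i)=3$ for $i<n$ forces $s(i)\leq 1$, and then closes the argument by a cardinality count $|S_-(BW_n)|=2^{n-1}$ (which it takes from the general theory rather than from Proposition~\ref{p2}). You instead notice that Proposition~\ref{p2} has already identified $S_-(BW_n)$ with $\mathrm{supp}\,p_{n,-}(t)$, and that expanding $p_{n,-}(t)=\prod_{i=1}^{n-1}(1+t_i)$ shows this support to be exactly $\{s\in\{0,1\}^n : s(n)=0\}$, so the corollary is an immediate restatement. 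Your route is more economical and in fact reveals that, logically, Proposition~\ref{p2} and Corollary~\ref{c3} are the same assertion phrased two ways; the paper's route has the modest virtue of being self-contained (reproving the containment directly from the out-degree definition) and of running parallel to the argument it gives just before for maximal parking functions in Corollary~\ref{c2}.
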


\begin{proof}
Since both sets above have the same cardinality $2^{n-1}$, we only need to check that every internal parking function must lie in $S_{-}(BW_n)$. Let $s$ be internal. Since $d(n,n,n)=2$, we conclude that $s(n)=0$. Since $d(i,i,i)=3$, for $i<n$, we conclude that $d(i)\leq 1$.
\end{proof}

It is known, \cite{HR} that the internal Hilbert series is graded by the internal parking functions, $$h_{n,-}(j)=\{s\in S_{-}(BW_n) : |s|=j\}.$$ We therefore conclude:

\begin{theorem}\label{t3}
The internal Hilbert series of $X_n$ is binomial: $$h_{n,-}(j)=\left\{\begin{array}{cc}{n-1 \choose j}, & 0\leq j<n \\0, & \mbox{otherwise}\end{array}\right.$$
\end{theorem}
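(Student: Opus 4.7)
The plan is to combine Corollary \ref{c3} with the grading formula $h_{n,-}(j)=|\{s\in S_-(BW_n): |s|=j\}|$ stated just before the theorem, and then reduce the problem to a routine count of binary vectors of fixed weight.

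First I would invoke Corollary \ref{c3}, which identifies $S_-(BW_n)$ with the explicit set $\{s\in\{0,1\}^n : s(n)=0\}$. This turns the computation of $h_{n,-}(j)$ into a purely combinatorial question: how many vectors $s\in\{0,1\}^n$ satisfy both $s(n)=0$ and $\sum_{i=1}^n s(i)=j$? Since the last coordinate is pinned to $0$, the weight $j$ must be realized entirely among the first $n-1$ coordinates, each independently taking the value $0$ or $1$. Choosing which coordinates carry a $1$ is the same as choosing a size-$j$ subset of $[1:n-1]$, so the count equals $\binom{n-1}{j}$ whenever $0\le j\le n-1$.

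For the second case $j\ge n$, I would simply observe that every $s\in S_-(BW_n)$ satisfies $|s|\le n-1$, again because $s(n)=0$ and each of the remaining $n-1$ entries is at most $1$; hence no internal parking function of weight $\ge n$ exists, and $h_{n,-}(j)=0$. Together these two cases give exactly the piecewise formula in the theorem statement.

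I do not anticipate a genuine obstacle, since the work has already been concentrated in Corollary \ref{c3} and in the known result (cited from \cite{HR}) that the internal Hilbert series is graded by internal parking functions. The only sanity checks worth noting are that the boundary value $j=n-1$ yields $\binom{n-1}{n-1}=1$, consistent with the unique maximal internal parking function $(1,1,\ldots,1,0)$ mentioned in the introduction, and that $j=0$ yields $\binom{n-1}{0}=1$ for the zero vector, which is trivially internal since the out-degrees $d(i,k,j)$ in $BW_n$ are at least $2$ at every singleton $\{k\}$.
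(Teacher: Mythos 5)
Your proposal is correct and follows exactly the route the paper intends: the paper states the grading formula from \cite{HR} immediately before the theorem and then writes ``We therefore conclude,'' leaving the count via Corollary \ref{c3} implicit, which is precisely what you spell out. Your sanity checks at $j=0$ and $j=n-1$ are a nice addition but not a departure in method.
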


We will now discuss zonotopal algebra in general, so that we have the framework for understanding the zonotopal algebra of the broken wheel graph, as well as the zonotopal algebra of the generalized broken wheel graph section \ref{GBWG}.

\subsection{Zonotopal Spaces}\label{introsonotopal}

Let $X$ be a matrix whose columns lie in $\mathbb{R}^n\setminus 0$ and span $\mathbb{R}^n$. We can consider two families of variable convex (bounded) polytopes: $$\Pi_{r}(t):=\{r: Xr=t, r\in \mathbb{R}_{\geq0}\} \text{ and } \Pi_{r}^1(t):=\{r: Xr=t, r\in [0,1]^n\}.$$ The \Dfn{box spline} $B_r(t)$ is the volume of $\Pi_r^1(t)$. As discussed in \cite{de1993box}, $B_r(t)$ is a piecewise polynomial. With $\mathbb{K}$ a field of characteristic zero, the \Dfn{central Dahmen-Micchelli space}, or central $\mathcal{D}$-space, $\mathcal{D}(X)$ of $B_r(t)$ is the vector space in $\mathbb{K}[t_1,...,t_n]$ generated by all polynomials in $B_r(t)$ and their partial derivatives. 

Viewing $X$ as a matroid whose ground set is the columns of $X$, $\mathcal{D}(X)$ can also be defined as the \textit{Macaulay inverse system} \cite{opac-b1117856} of a certain ideal $\mathcal{J}(X)$. To define this ideal, first note that a vector $r\in \mathbb{R}^n$ written in the basis $(t_1,...,t_n)$ naturally defines the polynomial $p_r=\sum_{i=1}^n \lambda_it_i$ in $\mathbb{K}[t_1,...,t_n]$; if $R$ is a set of vectors, then let $p_R:=\prod_{r\in R} p_r\in \mathbb{K}[t_1,...,t_n]$.  The ideal $\mathcal{J}(X)$ is generated by the polynomials in $\mathbb{K}[t_1,...,t_n]$ defined by the cocircuits of $X$: $$\mathcal{J}(X):=\ideal \{p_C : C\subseteq X \text{ cocircuit}\}\subseteq \mathbb{K}[t_1,...,t_n].$$ We then have that $$\mathcal{D}(X)=\ker \mathcal{J}(X) := \{f\in \mathbb{K}[t_1,...,t_n] : p(\frac{\delta}{\delta t_1},...,\frac{\delta}{\delta t_n})f=0\},$$ where $p$ runs over a set of generators of $\mathcal{J}(X)$. It was shown in \cite{Jia1985} that the dimension of $\mathcal{D}(X)$ is $\left\vert\mathbb{B}(X)\right\vert$, where $\mathbb{B}(X)$ is the set of bases of $\mathbb{R}^n$ which can be selected from $X$. Note that we use the same notation here as we did in the section above for the spanning trees of a matrix define by a graph, as when dealing with a graphical matroid (as we are), these sets are the same.

The \Dfn{central $\mathcal{P}$-space} of $X$ is defined as $$\mathcal{P}(X):=\Span \{p_R : R\subseteq X, X\setminus R \text{ has full rank}\}\subseteq \mathbb{K}[t_1,...,t_n].$$ $\mathcal{P}(X)$ can also be expressed as a Macaulay inverse system of a \textit{power ideal} generated by products of linear forms defining particular hyperplanes defined by $X$; see \cite{deboor1991} for more details. As proven in \cite{DR1990}, the central $\mathcal{D}(X)$ and $\mathcal{P}(X)$-spaces are dual under the pairing $\langle \cdot, \cdot \rangle: \mathcal{D}(X)\rightarrow \mathcal{P}(X)^*, f\mapsto \langle \cdot, f \rangle$, giving us that their Hilbert series are equal.

There are two more dual pairs which make up the zonotopal algebra of $X$. In order to define these pairs, we must define the set of \textit{internal bases} $\mathbb{B}_-(X)$ and the set of \textit{external bases} $\mathbb{B}_+(X)$ of $X$. Let $B_0=(b_1,...,b_n)$ be an arbitrary basis for $\mathbb{R}^n$ which is not necessarily contained in $\mathbb{B}(X)$. Let $X'=(X,B_0)$ and let $$ex: \{I\subseteq X : I \text{ linearly independent}\}\rightarrow \mathbb{B}(X')$$ be the function mapping an independent set in $X$ to its greedy extension in $X'$; i.e. for such an $I$, the vectors $b_1,...,b_n$ are added successively to $I$ unless the resulting set would be linearly dependent to get its image under $ex$. The set of \Dfn{external bases} $\mathbb{B}_+(X)$ is then defined as $$\mathbb{B}_+(X):=\{B\in \mathbb{B}(X') : B=ex(I) \text{ for some } I\subseteq X \text{ independent}\},$$ and the set of \Dfn{internal bases} $\mathbb{B}_-(X)$  is defined as $$\mathbb{B}_-(X):=\{B\in \mathbb{B}(X) : B \text{ contains no internally active elements}\}.$$ Note that the sets $\mathbb{B}_-(X)$ and $\mathbb{B}_+(X)$ as defined in the section above are equal to these sets for graphical matroids. We then have the following objects which define the \Dfn{internal $\mathcal{D}_-$-space} and \Dfn{external $\mathcal{D}_+$-space} of $X$: $$\mathcal{J}_-(X):=\ideal \{p_C : C\subseteq X \mathbb{B}_-(X)\text{-cocircuit}\}\subseteq \mathbb{K}[t_1,...,t_n],$$ $$\mathcal{D}_-(X):=\ker \mathcal{J}_-(X)\subseteq \mathbb{K}[t_1,...,t_n],$$ $$\mathcal{J}_+(X):=\ideal \{p_C : C\subseteq X \mathbb{B}_+(X)\text{-cocircuit}\}\subseteq \mathbb{K}[t_1,...,t_n],$$ $$\mathcal{D}_+(X):=\ker \mathcal{J}_+(X)\subseteq \mathbb{K}[t_1,...,t_n],$$ where a $\mathbb{B}_-(X)$-cocircuit (or $\mathbb{B}_+(X)$-cocircuit) is a subset of $X$ that intersects all bases in $\mathbb{B}_-(X)$ (or $\mathbb{B}_+(X)$), which is inclusion-minimal with this property. We then have that the \Dfn{internal $\mathcal{P}_{-}$-space} and \Dfn{external $\mathcal{P}_+$-space} of $X$ are defined as: $$\mathcal{P}_-(X):=\Span \{p_Y : Y\subseteq X\} \text{ and } \mathcal{P}_+(X):=\bigcap_{x\in X} \mathcal{P}(X\setminus x).$$ These three pairs of spaces make up the study of zonotopal algebras, and are discussed in great detail by Holtz and Ron in \cite{HR}. Now that we are familiar with their general definitions, we are ready to specialize our discussion to the case of the broken wheel graph.

\subsection{The Zonotopal Spaces of the Broken Wheel Graph}

We will now construct the zonotopal spaces associated to $X_n$. With $\mathbb{K}$ a field of characteristic zero, let $\mathbb{K}[t_1,...,t_n]_j$ be the subspace of $\mathbb{K}[t_1,...,t_n]$ consisting of homogeneous polynomials of degree $j$. Per \cite{HR}, each graph is associated with three pairs of subspaces of $\mathbb{K}[t_1,...,t_n]$: a central pair, an internal pair, and an external pair. As mentioned before, we will not need and hence will not introduce, the external pair. We will first introduce the central and internal Dahmen-Micchelli zonotopal spaces $\mathcal{D}(X_n)$ and $\mathcal{D}_-(X_n)$, respectively. We would like to stress that the latter space depends on the ordering we impose on the edges of the graph. The definition we give below corresponds to ordering the edges of $X_n$ in a reverse ordering. In fact, some of the proofs in this paper may be simplified once we use the reverse ordering. However, this reverse ordering is not inductive; the index of a given edge in the graph depends not only on the vertices that are connected, but also on the rank of the graph. To this end, we single out, for $1\leq i\leq j<n$, the following subset of $X_n$: $$X_{i,j,n}:=\{x_{2i},...,x_{2j}\}\cup \{x_{2i-1},x_{2j+1}\}.$$ For $j=n$, the definition is as follows: $$X_{i,n,n}:=\{x_{2i},...,x_{2j}\}\cup \{x_{2i-1}\}.$$

The \Dfn{central Dahmen-Micchelli space} $\mathcal{D}(X_n)$ is defined as the space of all polynomials in $\mathbb{K}[t_1,...,t_n]$ that are annihilated by each of the following differential operators: $$p_{X_{i,j,n}}(D), 1\leq i\leq j\leq n.$$ The \Dfn{internal Dahmen-Micchelli space} $\mathcal{D}_-(X_n)$ is defined as the space of all polynomials in $\mathbb{K}[t_1,...,t_n]$ that are annihilated by each of the following differential operators: $$p_{x_{2i}}(D)p_{x_{2i+1}}(D), 1\leq i<n, \mbox{ and } p_{x_{2n}}(D).$$

Note that these definitions are derived by considering all polynomials $p_C$, where $C$ is a cocircuit of $X_n$, and considering all differential operators which annihilate theses polynomials. This is the very construction of the central Dahmen-Micchelli space of $X_n$.

\begin{example}\label{e3}
The differential operators which define $\mathcal{D}(X_2)$ correspond to the subsets $$\{x_3,x_4\}, \{x_1,x_2,x_3\},\{x_1,x_2,x_4\}.$$ Those which correspond to $\mathcal{D}_-(X_3)$ are $$\{x_6\},\{x_4,x_5\},\{x_2,x_3\}.$$ Thus, while both spaces consist of polynomials in the variables $t_1$ and $t_2$ of degree not exceeding 2 the spaces themselves are different. Incidentally, the polynomial $t_2^2/2+t_1t_2$ lies in the first, while $t_1^2/2+t_1t_2$ lies in the second.
\end{example}

Next we will introduce the space dual to the central Dahmen-Micchelli space, called the $\mathcal{P}$-central space, and the space dual to the internal Dahmen-Micchelli space, called the $\mathcal{P}$-internal space. Here, and elsewhere, we denote by $$p_x(D), x\in \mathbb{R}^n,$$ the directional derivative in the $x$ direction. Also, for $Y\subset X$, $$p_Y:=\prod_{x\in Y} p_x.$$ The \Dfn{$\mathcal{P}$-central space} $\mathcal{P}(X_n)$ is the space of all polynomials in $\mathbb{K}[t_1,...,t_n]$ that are annihilated by each of the following differential operators: $$p_{1_{i,j}}(D)^{j-i+3}, 1\leq i\leq j<n,$$ and $$p_{1_{i,n}}(D)^{n-i+2}, 1\leq i<n.$$ Where $1_{i,j}:=e_i+e_{i+1}+\cdots + e_j,$ and $p_{1_{i,j}}(D)^k$ is $k$-fold differentiation in the $i_{i,j}$ direction. The \Dfn{$\mathcal{P}$-internal space} $\mathcal{P}_-(X_n)$ is the space of all polynomials in $\mathbb{K}[t_1,...,t_n]$ that are annihilated by each of the following differential operators: $$p_{1_{i,j}}(D)^{j-i+2}, 1\leq i\leq j<n,$$ and $$p_{1_{i,n}}(D)^{n-i+1}, i\leq i\leq n.$$

Note that the set of differential operators given in the definition of the $\mathcal{P}$-internal space is redundant. However, we defined it in this way to demonstrate the parallels to the central case definition. This also makes it easier to check that the definition is consistent with the general definition of the internal space, as given in \cite{HR}.\\

The Hilbert series of the broken wheel graph is captured by the homogeneous dimensions of the zonotopal spaces:

\begin{proposition}[Holtz and Ron, \cite{HR}]\label{r3}
For each $j\geq 0$, we have $$h_n(j)=\mbox{dim} (\mathcal{P}(X_n)\cap \mathbb{K}[t_1,...,t_n]_j),$$ and $$h_{n,-}(j)=\mbox{dim}(\mathcal{P}_-(X_n)\cap \mathbb{K}[t_1,...,t_n]_j).$$
\end{proposition}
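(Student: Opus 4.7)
The plan is to reduce both equalities to a single structural fact: that $\mathcal{P}(X_n)$ and $\mathcal{P}_-(X_n)$ are \emph{monomial} spaces whose natural monomial bases are indexed by the parking functions $S(BW_n)$ and the internal parking functions $S_-(BW_n)$ respectively. Once this monomial-basis description is established, the proposition follows from a grading count together with Proposition \ref{r2} and its internal analog.

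The first step is to invoke (or prove for $BW_n$ directly) the Postnikov-Shapiro theorem for graphical matroids: $\mathcal{P}(X_n)$ admits a basis consisting of the monomials $m_s=\prod_{i} t_i^{s(i)}$ as $s$ ranges over $S(BW_n)$, and analogously $\mathcal{P}_-(X_n)$ has a monomial basis $\{m_s : s\in S_-(BW_n)\}$. A self-contained verification for the broken wheel uses the defining operators $p_{1_{i,j}}(D)^{j-i+3}$ and $p_{1_{i,n}}(D)^{n-i+2}$: expanding $p_{1_{i,j}}(D)^{j-i+3}$ multinomially, one checks that the parking-function condition ``for every window $[i,j]$ there exists $k\in[i,j]$ with $s(k)<d(i,k,j)$'' is exactly what forces at least one partial-derivative exponent to exceed the corresponding entry of $s$ in every surviving term, so $m_s$ is annihilated by none of the defining operators precisely when $s$ is a parking function. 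The reverse inclusion (that no other monomials survive) is forced by the dimension equality $\dim\mathcal{P}(X_n)=|\mathbb{B}(X_n)|$ together with the classical identity $|\mathbb{B}(X_n)|=|S(BW_n)|$.

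Once the monomial basis is in place, the proposition is immediate: each $m_s$ is homogeneous of degree $|s|$, so
$$\dim\bigl(\mathcal{P}(X_n)\cap \mathbb{K}[t_1,\ldots,t_n]_j\bigr)=\bigl|\{s\in S(BW_n):|s|=j\}\bigr|,$$
which equals $h_n(j)$ by Proposition \ref{r2}. The internal statement proceeds identically using the refinement $h_{n,-}(j)=|\{s\in S_-(BW_n):|s|=j\}|$ recorded just before Theorem \ref{t3}, since that refinement combined with the monomial basis of $\mathcal{P}_-(X_n)$ yields the desired graded dimensions.

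The main obstacle is the \emph{spanning} half of the monomial-basis description, namely showing that no non-parking monomials survive the defining differential operators. The surviving direction drops out of a direct multinomial expansion, but the spanning direction cannot be proved purely combinatorially; it ultimately rests either on a dimension argument (matching the parking-function generating function with a Tutte-polynomial specialization computing $\dim\mathcal{P}(X_n)$) or on invoking the Postnikov-Shapiro machinery. Everything else in the proof—the degree-tracking and the translation to $h_n(j)$ and $h_{n,-}(j)$—is essentially bookkeeping once that structural input is accepted.
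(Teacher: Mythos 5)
The paper never actually proves Proposition~\ref{r3}: it is quoted verbatim from Holtz and Ron~\cite{HR} and used immediately afterwards as an input to the proof of Theorem~\ref{t4}. So there is no in-paper argument for you to match; what you have produced is a standalone derivation of the cited fact for the special case of $BW_n$. That derivation is essentially sound, but it \emph{inverts} the logical dependency of the paper: here, the spanning half of the monomial-basis description (Theorem~\ref{t4}) is \emph{proved from} Proposition~\ref{r3} via the graded dimension count, whereas you prove Proposition~\ref{r3} \emph{from} the monomial basis. To avoid a circle you must ground the monomial-basis claim independently, which you do by invoking Postnikov--Shapiro (or the ungraded equality $\dim\mathcal{P}(X_n)=|\mathbb{B}(X_n)|=|S(BW_n)|$), and this is legitimate. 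Do note that the original Holtz--Ron proposition is stated and proved for arbitrary $X$, without any hypothesis that $\mathcal{P}(X)$ is monomial; your argument only covers the case where a monomial basis is available, which suffices for $BW_n$ but would not recover the general theorem.

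Two local issues in your write-up deserve attention. First, the sentence ``$m_s$ is annihilated by none of the defining operators precisely when $s$ is a parking function'' should read ``annihilated by \emph{all} of the defining operators,'' since $\mathcal{P}(X_n)$ is defined as the common kernel of those operators. Second, the claim that the parking-function condition ``for every window $[i,j]$ there exists $k\in[i:j]$ with $s(k)<d(i,k,j)$'' is \emph{exactly} the condition that every surviving term in the multinomial expansion dies is not accurate as stated: the differential operator $p_{1_{i,j}}(D)^{j-i+3}$ annihilates $m_s$ if and only if the \emph{sum} $\sum_{l=i}^{j}s(l)$ is strictly less than $j-i+3$, which is a total-degree bound, while the parking-function condition is an existential bound on a single coordinate. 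The implication (parking function $\Rightarrow$ sum bound) is true for $BW_n$, but it requires the specific structural bounds on $BW_n$-parking functions recorded in Lemma~\ref{l6} and Corollary~\ref{c2} --- this is exactly the extra step the paper takes in the central-case paragraph of the proof of Theorem~\ref{t4} (``the number of 2-entries on $[i:n]$ cannot exceed the number of 0 entries''). As written, your multinomial-expansion step glosses over this and should be filled in for the argument to be complete.
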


Recall that a polynomial space is \Dfn{monomial} if it is spanned by monomials. The general theory of zonotopal algebra implies that once a $\mathcal{P}$-space of a graph is monomial, the corresponding parking functions yield a monomial basis for the space. This is exactly the case here.

\begin{theorem}\label{t4}
The zonotopal spaces $\mathcal{P}(X_n)$ and $\mathcal{P}_-(X_n)$ are monomial. Consequently, a basis for $\mathcal{P}(X_n)$ is given by the monomials $$m_s: t\mapsto t^s, s\in S_n,$$ while a basis for $\mathcal{P}_-(X_n)$ is given by the square-free monomials in the first $n-1$ variables.
\end{theorem}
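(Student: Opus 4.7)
The plan is to verify directly that the proposed monomials satisfy the annihilation conditions defining each $\mathcal{P}$-space, and then use Proposition \ref{r3} (together with Proposition \ref{r2} and Theorem \ref{t3}) to match cardinalities with the homogeneous dimensions. Once the inclusions $\{m_s : s \in S(BW_n)\} \subseteq \mathcal{P}(X_n)$ and $\{m_s : s\in S_-(BW_n)\} \subseteq \mathcal{P}_-(X_n)$ are established, the dimension count forces these linearly independent monomial families to be bases, which in turn proves that both spaces are monomial.

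The key algebraic observation I would use throughout is that the operator $p_{1_{i,j}}(D)^k = (D_i + D_{i+1} + \cdots + D_j)^k$ annihilates a monomial $t^s$ if and only if $\sum_{l=i}^{j} s(l) < k$. Indeed, after multinomial expansion every multi-index $\alpha$ with $|\alpha| = k$ and support in $[i,j]$ must have $\alpha_l > s(l)$ for some $l$, hence $D^{\alpha} t^s = 0$; conversely if $\sum s(l) \geq k$ one can exhibit an $\alpha \leq s$ yielding a nonzero contribution. This reduces both membership statements to combinatorial inequalities on the exponent vector $s$.

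For the central space, suppose $s \in S(BW_n) = \mathrm{supp}\, p_{n,1}(t)$ by Proposition \ref{p1}, so $t^s$ arises by choosing one term from each factor $(1 + t_{k-1} + t_k)$ of $\prod_{k=1}^n (1+t_{k-1}+t_k)$. Each factor contributes at most $1$ to $\sum_{l=i}^{j} s(l)$, and factor $k$ contributes to this sum only if $k-1$ or $k$ lies in $[i,j]$, i.e.\ $k \in [i, j+1] \cap [1,n]$. For $1 \le i \le j < n$ this index set has size $j-i+2$, giving $\sum_{l=i}^{j} s(l) \le j-i+2 < j-i+3$; for $j=n$ and $i<n$ it has size $n-i+1$, giving $\sum_{l=i}^{n} s(l) \le n-i+1 < n-i+2$. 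Thus $m_s \in \mathcal{P}(X_n)$ for every $s \in S(BW_n)$. Since these distinct monomials are linearly independent, and by Proposition \ref{r3} together with Proposition \ref{r2} the number of them of degree $j$ equals $h_n(j) = \dim(\mathcal{P}(X_n) \cap \mathbb{K}[t]_j)$, they form a basis in each degree.

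For the internal space, Corollary \ref{c3} identifies $S_-(BW_n)$ with $\{s \in \{0,1\}^n : s(n)=0\}$, so the proposed basis is exactly $\{m_s : s \in S_-(BW_n)\}$. For such $s$, $s(l) \le 1$ for $l<n$ and $s(n)=0$, so $\sum_{l=i}^{j} s(l) \le j - i + 1 < j - i + 2$ for $1 \le i \le j < n$, and $\sum_{l=i}^{n} s(l) = \sum_{l=i}^{n-1} s(l) \le n - i < n - i + 1$. Hence every such $m_s$ is annihilated by all defining operators of $\mathcal{P}_-(X_n)$. The count of square-free monomials of degree $j$ in $t_1, \ldots, t_{n-1}$ is $\binom{n-1}{j}$, which by Theorem \ref{t3} and Proposition \ref{r3} equals $\dim(\mathcal{P}_-(X_n) \cap \mathbb{K}[t]_j)$, so again we obtain a basis. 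The only real subtlety here is the bookkeeping translation between the interval-sum conditions imposed by the operators $p_{1_{i,j}}(D)^{\bullet}$ and the factor-by-factor construction of parking functions supplied by Propositions \ref{p1}--\ref{p2}; no deeper structural result is needed because the Hilbert-series identity already does the counting work for us.
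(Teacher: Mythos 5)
Your proof is correct and follows essentially the same strategy as the paper's: verify directly that the proposed monomials are annihilated by the defining differential operators, then invoke Proposition \ref{r3} (with the Hilbert-series counts from Proposition \ref{r2} and Theorem \ref{t3}) to upgrade the inclusion to a basis. Your derivation of the interval-sum bounds via the factor decomposition of $p_{n,1}(t)$ from Proposition \ref{p1} is a cleaner and more explicit route to the same inequalities that the paper establishes by a terser counting argument on $2$-entries versus $0$-entries, and you also supply the correct exponents $j-i+1 < j-i+2$ in the internal case, where the paper's text has an apparent typo.
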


\begin{proof}
We simply verify that each of the aforementioned monomials is annihilated by each of the requisite differential operators. The rest follows from proposition \ref{r3}.\

Let $s\in S_{-}(BW_n)$, and choose $1\leq i\leq n$. Since $m_s$ does not involve the variable $t_n$, $m_s$ is a polynomial of degree $\leq n-i$ in variables $t_i,...,t_n$; hence, it is annihilated by $p_{1_{i,n}}(D)^{n-i+1}$. Now choose $1\leq i\leq j<n$. Then $m_s$ is a polynomial of degree $\leq n-i+1$ in the variables $t_i,...,t_j$; hence, it is annihilated by $p_{1_{i,j}}(D)^{n-i+2}$. This completes the proof for the internal case.\

Assume now that $s\in S_n$. Note that the characterization of $s$ implies that $\sum_{j=i}^n s(j)\leq n-i+1$ (since the number of 2-entries on $[i:n]$ cannot exceed the number of 0 entries), while $\sum_{j=i}^k s(j)\leq n-i+2$. Thus, an analogous argument to the above yields the result.
\end{proof}

We note in passing that the $\mathcal{P}$-external space is \textit{not} monomial. In fact, external zonotopal spaces are never monomial unless the underlying linear matroid in the tensor of rank-1 matroids.\\

The general theory of zonotopal algebra tells us that the central spaces form a dual pair, and that the same is true for the internal pair. To this end, we make the following definition: Let $X$ be a graph, and $s$ a parking function of $X$. A polynomial $p\in \mathbb{K}[t_1,...,t_n]$ is called \Dfn{$\mathbf{s}$-monic} in $X$ if $p\in \mathcal{D}(X)$, the monomial $m_s$ appears in the monomial expansion of $p$ with coefficient 1, and all other monomials $m_{s'}$ that correspond to the other parking functions of $X$ appear with coefficient 0 in this expansion.\

Similarly, for an internal parking function $s$ of $X$, $p\in \mathbb{K}[t_1,...,t_n]$ is \Dfn{internally $\mathbf{s}$-monic} in $X$ if $p\in \mathcal{D}_-(X)$ (for the fixed ordering of $X$ that is considered), $m_s$ appears in the monomial expansion of $p$ with coefficient 1, and all other monomials $m_{s'}$ that correspond to the other internal parking functions appear with coefficient 0.

\begin{proposition}[Holtz and Ron, \cite{HR}]\label{r4}
Let $X$ be a graphic matroid, and assume that $\mathcal{P}(X)$ is monomial. Then, for each parking function $s$ of $X$ there exists a unique $s$-monic polynomial in $X$. Similarly, if $\mathcal{P}_-(X)$ is monomial, and $s$ is an internal parking function, there exist a unique internal $s$-monic polynomial in $X$. The collection of all $s$-monic polynomials in $\mathcal{D}(X)$ form a basis for $\mathcal{D}(X)$ (which is dual to the monomial basis of $\mathcal{P}(X)$); similarly for $\mathcal{D}_-(X)$.
\end{proposition}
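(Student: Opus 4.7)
The plan is to exploit the duality between $\mathcal{D}(X)$ and $\mathcal{P}(X)$ (and similarly for the internal spaces) to turn the existence/uniqueness of an $s$-monic polynomial into a statement about a nondegenerate pairing restricted to a known monomial basis. Concretely, the pairing referenced in the paper (from \cite{DR1990}) is the standard apolarity form $\langle f, g \rangle := (f(D)g)(0)$, which is nondegenerate on $\mathcal{D}(X)\times \mathcal{P}(X)$ and identifies the two spaces as duals, so in particular they have the same finite dimension, equal to the number of parking functions of $X$. The monomiality hypothesis gives a distinguished basis of $\mathcal{P}(X)$ indexed by $S(X)$, namely $\{m_s : s\in S(X)\}$, and the whole statement reduces to producing the dual basis inside $\mathcal{D}(X)$.

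First, I would set up the evaluation map $\phi \colon \mathcal{D}(X) \to \mathbb{K}^{S(X)}$ sending $p \mapsto (c_s(p))_{s\in S(X)}$, where $c_s(p)$ is the coefficient of $m_s$ in the monomial expansion of $p$. Observe that $c_s(p) = \langle p, m_s\rangle / s!$ up to the standard normalization of the apolarity pairing, so $\phi$ is (up to rescaling coordinates) the map $p \mapsto (\langle p, m_s\rangle)_{s\in S(X)}$. Next, I would verify that $\phi$ is injective: if $\phi(p)=0$, then $\langle p, m_s\rangle = 0$ for every $s\in S(X)$, and since the $m_s$ span $\mathcal{P}(X)$ by the monomiality hypothesis, we get $\langle p, g\rangle = 0$ for all $g\in \mathcal{P}(X)$; by nondegeneracy of the pairing, $p=0$. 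Because $\dim \mathcal{D}(X) = |S(X)|$ (again by duality), $\phi$ is also surjective, so for each $s$ there is a unique $p_s\in \mathcal{D}(X)$ with $\phi(p_s) = e_s$, i.e.\ the coefficient of $m_s$ is $1$ and the coefficient of $m_{s'}$ is $0$ for every other parking function $s'$. That $p_s$ is, by definition, the unique $s$-monic polynomial. Since $\{p_s\}_{s\in S(X)}$ is $\phi^{-1}$ of a basis, it is itself a basis of $\mathcal{D}(X)$, and by construction it is dual to $\{m_s\}_{s\in S(X)}$ under $\langle\cdot,\cdot\rangle$.

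For the internal statement I would run the identical argument, now with the duality of $\mathcal{D}_-(X)$ and $\mathcal{P}_-(X)$ supplied by the same apolarity pairing, using the monomial basis of $\mathcal{P}_-(X)$ indexed by $S_-(X)$ and the fact $\dim \mathcal{D}_-(X) = |S_-(X)|$. The only point requiring care is that, because the internal zonotopal spaces depend on the chosen edge ordering, the monomial basis of $\mathcal{P}_-(X)$ must be the one indexed by internal parking functions with respect to the \emph{same} ordering; with that matching, injectivity of the analogous evaluation map and the dimension count again force bijectivity.

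The only real obstacle is the nondegeneracy of the apolarity pairing on $\mathcal{D}(X)\times \mathcal{P}(X)$ and the corresponding internal version, together with the dimension identity $\dim \mathcal{D}(X) = |S(X)|$; both are established in the cited works (\cite{DR1990} and \cite{HR}), so I would invoke them rather than reprove them. Once those are granted, everything above is a short linear-algebra exercise: a linear map between finite-dimensional spaces of the same dimension is bijective iff injective, and injectivity follows immediately from the monomiality of the $\mathcal{P}$-space by the argument above.
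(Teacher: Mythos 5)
The paper itself does not prove Proposition~\ref{r4}; it is cited from Holtz and Ron~\cite{HR}, with the supporting duality statement attributed to~\cite{DR1990}. Your argument is the standard one for this result: reduce existence/uniqueness of $s$-monic polynomials to the bijectivity of the ``take the coefficients on the monomial basis'' map, get injectivity from nondegeneracy of the apolarity pairing together with the fact that $\{m_s : s\in S(X)\}$ spans $\mathcal{P}(X)$, and get surjectivity from the dimension identity $\dim\mathcal{D}(X)=|S(X)|$. The computation $\langle m_s, p\rangle = s!\,c_s(p)$ (and its symmetry for the degrees that matter) is correct, and the internal case runs identically once the edge-ordering caveat you flag is observed. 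This is the right proof, correctly executed, and it matches what the cited sources do; there is no genuine gap.
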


\begin{corollary}\label{c4}
For each broken wheel graph $BW_n$, there is a unique basis for $\mathcal{D}(X_n)$ which is monic in $X_n$. Similarly, there is a unique basis for $\mathcal{D}_-(X_n)$ which is internally monic in $X_n$.
\end{corollary}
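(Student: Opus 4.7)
The statement is essentially a direct specialization of Proposition \ref{r4} to the broken wheel graph, so my plan is to check that its hypotheses hold in our setting and then invoke it.

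First I would record that $BW_n$ is a graphic matroid by construction (its matrix representation $X_n$ is the signed incidence matrix of a graph), so the ``graphic matroid'' hypothesis of Proposition \ref{r4} is satisfied. Next I would cite Theorem \ref{t4} to see that both $\mathcal{P}(X_n)$ and $\mathcal{P}_-(X_n)$ are monomial polynomial spaces, with explicit monomial bases indexed by $S(BW_n)$ and $S_-(BW_n)$ respectively. This is precisely the monomial condition required by Proposition \ref{r4} in its central and internal forms.

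With these two hypotheses in place, I would apply Proposition \ref{r4} in the central case first: for each parking function $s \in S(BW_n)$ there exists a unique $s$-monic polynomial $q_s \in \mathcal{D}(X_n)$ (uniqueness forces the coefficient of every $m_{s'}$, $s' \in S(BW_n)$, to be prescribed, and existence is guaranteed by the duality pairing between $\mathcal{D}(X_n)$ and $\mathcal{P}(X_n)$ under which the monomial basis of $\mathcal{P}(X_n)$ is dualized). The resulting collection $\{q_s : s \in S(BW_n)\}$ is then the unique basis of $\mathcal{D}(X_n)$ that is monic in $X_n$, since any two such bases would have to coincide element-by-element by the uniqueness of each $q_s$. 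Repeating this argument verbatim for the internal case, replacing $S(BW_n)$ by $S_-(BW_n)$ and $\mathcal{D}(X_n), \mathcal{P}(X_n)$ by $\mathcal{D}_-(X_n), \mathcal{P}_-(X_n)$, yields the internal half of the corollary.

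There is no real obstacle here: once Theorem \ref{t4} is in hand, the corollary is a one-line deduction from Proposition \ref{r4}. The only thing one might want to flag, for the benefit of the reader, is the implicit use of the fixed ordering on the edges of $X_n$ in the internal case (the internal Dahmen--Micchelli space depends on this choice), which is precisely the ordering set up at the start of Section~\ref{BWG}.
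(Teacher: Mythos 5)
Your proposal is correct and matches the paper's (implicit) reasoning: the corollary is stated without proof because it is intended as an immediate consequence of Theorem~\ref{t4} (monomiality of $\mathcal{P}(X_n)$ and $\mathcal{P}_-(X_n)$) combined with Proposition~\ref{r4}, and your write-up simply makes that one-line deduction explicit, including the correct note that the internal case depends on the fixed edge ordering.
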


\begin{example}\label{e4}
In example \ref{e3}, the polynomial $t_2^2/2+t_1t_2$ is (1,1)-monic in $X_2$ and $t_1^2+t_1t_2$ is internally (1,1,0)-monic in $X_3$.
\end{example}

\section{The Stanley-Pitman Polytope}\label{SPpolytope}

Pitman and Stanley \cite{PS} studied the $n$-dimensional polytope $$Q_n(t):=\{r\in \mathbb{R}_+^n : \sum_{i=j}^n r_i\leq \sum_{i=j}^n t_i, 1\leq j\leq n\},$$ and outlined several of its properties as well as found an explicit expression for it's volume $$q_n(t):=\operatorname{vol}(Q_n(t)).$$ In this section, we will draw a connection between the Stanley-Pitman polytope $Q_n(t)$ and zonotopal algebra of the broken wheel graph as well as prove theorems  \ref{t1} and \ref{t2} from the introduction.

\subsection{Connecting to the Zonotopal Algbra of the Broken Wheel Graph}

We first need to introduce the additional variables $(u_1,...,u_n)$ such that, for each $j$, we have $$u_j+\sum_{i=1}^j r_i=\sum_{i=1}^j t_i.$$ Equivalently, $$u_j+r_j-u_{j-1}=t_j, \mbox{ } j=2,...,n,$$ and $$u_1+r_1=t_1.$$ We then observe that these equations are equivalent to $$X_na=t,$$ with the $2n$-vector $a$ obtained from the concatenated $u,r$ by a suitable permutation: $u_i$ corresponds to $a_{21-1}$, and $r_i$ corresponds to $a_{2i}$. We also have the ``side condition'' that $a\in \mathbb{R}_+^{2n}.$ With this, we have the conditions necessary to link the zonotopal algebra of the broken wheel graph with the Stanley-Pitman polytope. With this, we have that the volume polynomial $q_n(t)$ is a homogeneous polynomial of maximal degree $n$ in the zonotopal space $\mathcal{D}(X_n)$:

\begin{theorem}\label{t5}
The Stanley-Pitman volume polynomial $q_n(t)$ is the monic polynomial in $\mathcal{D}(X_n)$ that corresponds to the parking function $(1,...,1)\in \mathbb{R}^n$. In addition, it is also the unique internally monic polynomial of maximal degree in $\mathcal{D}_-(X_{n+1})$ which corresponds to the unique internal parking function in $X_{n+1}$ of maximal degree, viz $(1,1,...,1,0)\in \mathbb{R}^{n+1}$.
\end{theorem}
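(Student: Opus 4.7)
The plan is to exploit the duality in Proposition~\ref{r4}: because $\mathcal{P}(X_n)$ is monomial (Theorem~\ref{t4}), for each parking function $s$ there is a unique $s$-monic polynomial in $\mathcal{D}(X_n)$, and analogously in the internal case. So proving the theorem reduces, for each part, to verifying (a) membership of $q_n$ (resp.\ its variable reversal $\bar q_n$, as flagged in the introduction for the internal statement) in the appropriate Dahmen--Micchelli space, and (b) that it carries the correct coefficients on the monomial basis of the associated $\mathcal{P}$-space.

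For the central claim, membership $q_n \in \mathcal{D}(X_n)$ is most naturally established via the polytope interpretation. Using the setup at the start of Section~\ref{SPpolytope} ($X_n a = t$ with $a = (u_1, r_1, u_2, r_2, \ldots) \ge 0$), one identifies $q_n(t)$ in the chamber $t_i > 0$ with a polynomial piece of the volume of $\Pi_{X_n}(t)$; the general box-spline / vector-partition-function theory reviewed in Section~\ref{introsonotopal} then places such polynomial pieces inside $\mathcal{D}(X_n)$. As a fallback, the operators $p_{X_{i,j,n}}(D)$ unfold to $(D_{j+1} - D_j)\prod_{k=i}^j D_k\,(D_i - D_{i-1})$ (with $D_{n+1}$ replaced by $0$ when $j = n$), and each can be shown to annihilate $q_n$ by a direct computation against Proposition~\ref{r1}'s explicit formula.

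The central monic condition is then a short check: Proposition~\ref{r1} gives the coefficient of $t_1 \cdots t_n$ in $q_n$ as $1/(1!)^n = 1$ since $(1,\ldots,1) \in K_n$. Homogeneity of $q_n$ in degree $n$ forces us to consider only maximal parking functions, and the identity $\mathrm{supp}\, q_n \cap \mathrm{supp}\, p_{n,0} = \{(1,\ldots,1)\}$ noted after Proposition~\ref{p2}, combined with Proposition~\ref{p1}'s identification $\mathrm{supp}\, p_{n,0} = S_{max}(BW_n)$, rules out any other parking-function monomial from $q_n$. Proposition~\ref{r4} then identifies $q_n$ as the unique $(1,\ldots,1)$-monic element of $\mathcal{D}(X_n)$.

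For the internal statement, apply the same two-step plan to $\bar q_n$ viewed inside $\mathbb{K}[t_1, \ldots, t_{n+1}]$ as independent of $t_{n+1}$. Annihilation by $D_{n+1}$ is immediate, and annihilation by $D_i(D_{i+1} - D_i)$ for $1 \le i \le n$ transforms, under the variable reversal, into the conditions $D_j(D_j - D_{j-1}) q_n = 0$ for $j = 1, \ldots, n$, which are verified directly from Proposition~\ref{r1}. For the monic check, Corollary~\ref{c3} confines $S_-(BW_{n+1})$ to $\{0,1\}^{n+1}$ with last coordinate zero, so the unique internal parking function of maximal degree is $(1,\ldots,1,0)$, its monomial $t_1 \cdots t_n$ has coefficient $1$ in $\bar q_n$, and all other internal parking functions have degree strictly smaller than $n$ and so cannot appear in a homogeneous degree-$n$ polynomial. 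The main obstacle I foresee is the direct verification of the differential equations: the central operators have three mixed factors and $q_n$ is a sum of $C_n$ monomials, so cancellation under each operator requires careful combinatorial bookkeeping; the cleaner polytope-volume route requires one to pin down the correct chamber and normalization so that the polynomial recovered is precisely $q_n$ rather than a scaled or reversed sibling.
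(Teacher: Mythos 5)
Your plan matches the paper's proof in outline: both use Proposition~\ref{r4} to reduce the claim to membership plus a monic check, both get central membership from the polytope identification $X_n a = t$ set up in the preamble to the theorem, and both derive the central monic condition from the fact that $(1,\ldots,1)$ is the only maximal parking function appearing in the support of $q_n$ (the paper phrases this via Corollary~\ref{c2}, which is equivalent to your appeal to $\mathrm{supp}\,p_{n,0}$ through Proposition~\ref{p1}). For the internal part the paper does not pass through variable reversal; it works with $q_n$ directly and verifies annihilation by $D_{n+1}$ and by $D_i(D_{i+1}-D_i)$, $1 \le i \le n$, via a telescoping pairing: whenever a term of $q_n$ has exponent vector $s$ with $s(i)\ge 2$, pair $s$ with $\hat s := s - e_i + e_{i+1}$, check that $s$ lies in $q_n$'s index set if and only if $\hat s$ does, and observe that $D_i^2(m_s/s!)$ equals $D_{i+1}D_i(m_{\hat s}/\hat s!)$, so the two contributions cancel in $(D_{i+1}D_i - D_i^2)q_n$. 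That pairing is exactly the ``careful combinatorial bookkeeping'' you flag at the end as the principal obstacle, and it must be supplied one way or another---whether you verify $D_i(D_{i+1}-D_i)\bar q_n = 0$ or the equivalent $D_j(D_j - D_{j-1})q_n = 0$ obtained by reversing variables, the verification is the same telescoping computation---so your write-up is incomplete on precisely this point. Otherwise your caution about scaled-versus-reversed siblings is well placed: the paper is not fully consistent about whether the polytope $X_n a = t$ reproduces $q_n$ or $\bar q_n$ (compare the index set $K_n$ in Proposition~\ref{r1} with the $\le$-condition used inside the proof of Theorem~\ref{t5}), and being explicit about the reversal, as you are, is a genuine improvement in rigor.
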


\begin{proof}
We have that $q_n(t)$ is the polynomial consisting of the sum of all the normalized monomials $\frac{m_s}{s!}$ of degree $n$ which satisfy \begin{equation}\label{qninequalities} \sum_{j=1}^i s(j)\leq i, 1\leq i\leq n.\end{equation} This can be seen by applying our notation to proposition \ref{r1}, the main theorem of Pitman and Stanley in \cite{PS}. Let $C_n$ be the set of vectors $s$ such that $\frac{m_s}{s!}$ is a term of $q_n(t)$. We notice that if $s\in C_n$ is a maximal parking function, then it must satisfy \begin{equation}\label{qninequalities2} i\leq \sum_{j=1}^i s(j)\leq i+1.\end{equation} We can see this via corollary \ref{c2}. Thus, we must have that $\sum_{j=1}^i s(j)=i$ for all $j$; in other words, $s=(1,...,1)$, making $q_n(t)$ the unique $(1,...,1)$-monic polynomial in $\mathcal{D}(X_n)$.\

In order to show that $q_n(t)$ is the unique internally monic polynomial of maximal degree in $\mathcal{D}_-(X_{n+1})$ which corresponds to the unique internal parking function in $X_{n+1}$ of maximal degree, we only need to show that $q_n(t)\in \mathcal{D}_-(X_{n+1})$, as our argument directly above gives us our correspondence between $q_n(t)$ and (1,...,1,0), up to normalization.\

We thus need to check that $q_n(t)$ is annihilated by the polynomials $$p_{x_{2i}}(D)p_{x_{2i+1}}(D), \mbox{ } 1\leq i< n+1, \mbox{ and } p_{x_{2(n+1)}}(D).$$ We can quickly see that $p_{x_{2(n+1)}}(D)$ annihilates $q_n(t)$ as $p_{x_{2(n+1)}}(D)$ is differentiation in the $t_{n+1}$ variable, of which there are none in $q_n(t)$. The other operators we need to consider are $$D_{i+1}D_i-D_i^2, i=1,...,n.$$ When $i=n$, we have that $(D_{n+1}D_n-D_n^2)q_n(t)=D_{n+1}D_nq_n(t)-D_n^2q_n(t)=0$, as the degree of $t_{n+1}$ is 0 and the degree of $t_n$ is either 0 or 1 for any term of $q_n(t)$.\

For $i<n$, let's consider a term $m_s/s!$ of $q_n(t)$. If $s(i)\leq 1$, we then have that $m_s/s!$ is annihilated by $D_{i+1}D_i-D_i^2$. If $s(i)\geq 2$, then let us prove that $s\in C_n$ if and only if $\hat{s}:=s-e_i+e_{i+1}\in C_n$, as we will then see that the annihilation of $q_n(t)$ by the differential operators in question will directly follow from this statement. First, let's assume that $s\in C_n$. We can then see that $$\sum_{j=1}^i s(j)=\sum_{j=1}^i \hat{s}(j)\leq i.$$ Thus, $\hat{s}$ satisfies the inequalities (\ref{qninequalities}), meaning that $\hat{s}\in C_n$. Now, let us assume that $\hat{s}\in C_n$, and let us further assume for contradiction that $s\notin C_n$. Then there exists some $i$ such that $\sum_{j=1}^i s(j)>i$. As $\hat{s}\in C_n$, we know that $$\sum_{j=1}^{i+1} s(j)=\sum_{j=1}^{i+1} \hat{s}(j)\leq i+1,$$ meaning that $i+1\leq s(1)+\cdots +s(i)\leq s(1)+\cdots + s(i)+s(i+1)\leq i+1$. This then means that $s(i+1)=0$ and that $\sum_{j=1}^i s(j)=i+1$. But this then means that $$\sum_{j=1}^i \hat{s}(j)=\sum_{j=1}^i s(j)=i+1\leq i,$$ a contradiction to our assumption that $\hat{s}\in C_n$. Thus, we must have that $s\in C_n$. Now, we can see, for $s\in C_n$ with $s(i)\geq 2$, that $$D_i^2(m_s/s!)=D_{i+1}D_i(m_{\hat{s}}/\hat{s}!)=m_{s-2e_i}/(s-2e_i)!$$ and thus that the $D_i^2(m_s/s!)$ term  and the $D_{i+1}D_i(m_{\hat{s}}/\hat{s}!)$  term in $(D_{i+1}D_i-D_i^2)q_n(t)$ cancel each other out. Furthermore, we know there exists a $t\in C_n$ (i.e. $t=s+e_i-e_{i+1}$) such that $s=\hat{t}$ and that $\hat{\hat{s}}\in C_n$. From this we know that the $D_i^2(m_t/t!)$ term and the $D_{i+1}D_i(m_{s}/s!)$  term cancel each other out, and that the $D_i^2(m_{\hat{s}}/\hat{s}!)$ term  and the $D_{i+1}D_i(m_{\hat{\hat{s}}}/\hat{\hat{s}}!)$ term cancel each other out. Carrying on in this fashion, we have that all of the terms of $(D_{i+1}D_i-D_i^2)q_n(t)$ are cancelled and we have that $(D_{i+1}D_i-D_i^2)q_n(t)=0$ as desired. We thus have that $q_n(t)\in \mathcal{D}_-(X_{n+1})$, giving us our result.
\end{proof}

\begin{corollary}\label{c4}
The polynomial space $\bar{q_n}(t)$ that is generated by the derivatives (of all orders) of the polynomial $q_n(t)$ is the zonotopal space $\mathcal{D}_-(X_{n+1})$. Thus, its homogeneous dimensions are binomial: $$\mbox{dim} (q_n(t)\cap \prod_j^0)={n \choose j}, \mbox{ } j=0,...,n.$$
\end{corollary}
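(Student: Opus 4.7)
The plan is to prove both inclusions between $\bar{q_n}(t)$ and $\mathcal{D}_-(X_{n+1})$ and then read off the binomial dimensions from Theorem \ref{t3}. The inclusion $\bar{q_n}(t)\subseteq\mathcal{D}_-(X_{n+1})$ is immediate: Theorem \ref{t5} places $q_n$ inside $\mathcal{D}_-(X_{n+1})$, and this space, being the kernel of an ideal of constant-coefficient differential operators, is closed under further differentiation. For the reverse inclusion I will match dimensions: Theorem \ref{t3} gives $\dim\bigl(\mathcal{D}_-(X_{n+1})\cap\mathbb{K}[t_1,\ldots,t_{n+1}]_j\bigr)=\binom{n}{j}$ and total dimension $2^n$. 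Since $q_n$ involves only $t_1,\ldots,t_n$, it suffices to show that the $2^n$ derivatives $\{D^{\alpha}q_n : \alpha\in\{0,1\}^n\}$ are linearly independent, and, as these are homogeneous, I argue one degree at a time.

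At each degree $j$, I will pair the $\binom{n}{j}$ derivatives $\{D^{\alpha}q_n : \alpha\subseteq[n],\ |\alpha|=n-j\}$ against the degree-$j$ monomial basis $\{m_{s'} : s'\subseteq[n],\ |s'|=j\}$ of $\mathcal{P}_-(X_{n+1})$, supplied by Theorem \ref{t4}, via the apolarity pairing $\langle m_{s'}, p\rangle:=m_{s'}(D)\,p|_{t=0}$. Using the explicit expansion $q_n=\sum_{k\in K_n} t^k/k!$ from Proposition \ref{r1}, together with the fact that $|s'+\alpha|=n$, a direct computation yields
\[
M_{s',\alpha} \;:=\; \langle m_{s'},\, D^{\alpha}q_n\rangle \;=\; \bigl[\,s'+\alpha\in K_n\,\bigr],
\]
a $0/1$-valued $\binom{n}{j}\times\binom{n}{j}$ matrix.

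The crux of the argument, and the step I expect to be the main obstacle, is to show that $M$ is invertible by finding an ordering under which it is triangular with unit diagonal. I will order the rows $s'$ lexicographically and match each row to the column indexed by $\alpha=\bar{s'}:=[n]\setminus s'$ in the same order. The diagonal entries $M_{s',\bar{s'}}$ equal $1$ because $s'+\bar{s'}=(1,1,\ldots,1)$ has partial sums $1,2,\ldots,n$, trivially satisfying the $K_n$ condition. For the below-diagonal vanishing, suppose $s'_i<_{\mathrm{lex}} s'_j$ and let $m$ be the smallest element of the symmetric difference $s'_i\triangle s'_j$; then $m\in s'_i\setminus s'_j$, and a short partial-sum count at position $m$ gives $|s'_j\cap[m]|+|\bar{s'_i}\cap[m]|=m-1<m$, so $s'_j+\bar{s'_i}\notin K_n$ and $M_{s'_j,\bar{s'_i}}=0$. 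Hence $M$ is upper triangular with unit diagonal, therefore invertible, delivering the required linear independence and the equality $\bar{q_n}(t)=\mathcal{D}_-(X_{n+1})$; the binomial dimension statement then follows directly from Theorem \ref{t3}.
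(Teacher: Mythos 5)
Your proof is correct and, importantly, it takes a genuinely different and more careful route than the paper's. The paper's proof argues that for every $\alpha\in\{0,1\}^n$ the derivative $q'=D^{\alpha}q_n$ \emph{is} the unique internal $s$-monic polynomial, with $s$ the exponent of $D^{\alpha}(t_1\cdots t_n)$, and then invokes Proposition~\ref{r4}. That claim is not established in the paper and is in fact false in general: already for $n=2$ one finds $D_1 q_2$ proportional to $t_1+t_2$, which carries nonzero coefficient on both parking-function monomials $t_1$ and $t_2$ and so is $s$-monic for neither. Your apolarity-pairing argument is the rigorous repair: it does not need the pairing matrix $M$ between the derivatives $\{D^{\alpha}q_n\}$ and the monomial basis $\{m_{s'}\}$ of $\mathcal{P}_-(X_{n+1})$ to be the identity, only invertible, and triangularity with unit diagonal is exactly what the explicit formula $M_{s',\alpha}=[\,s'+\alpha\in K_n\,]$ delivers. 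That formula and the partial-sum computation showing the off-diagonal entries vanish are both correct.

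One small slip in bookkeeping: with the usual lexicographic order on $\{0,1\}^n$, if $s'_i<_{\mathrm{lex}}s'_j$ then the smallest element $m$ of $s'_i\triangle s'_j$ lies in $s'_j\setminus s'_i$, not in $s'_i\setminus s'_j$ as written; running the same partial-sum count then shows the \emph{above}-diagonal entries $M_{s'_i,\bar{s'_j}}$ vanish, so $M$ is lower (rather than upper) triangular with unit diagonal. The conclusion — invertibility, hence linear independence of the $2^n$ derivatives, hence $\bar{q_n}(t)=\mathcal{D}_-(X_{n+1})$ and the binomial dimensions from Theorem~\ref{t3} — stands unchanged.
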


\begin{proof}
We know from theorem \ref{t4} that $\mathcal{P}_-(X_{n+1})$ is generated by the square-free monomials in the first $n$ variables. Let's consider the generator of maximal degree, $t_1\cdots t_n$. As we take partial derivatives of all orders of this monomial, we can see that we will generate all square-free monomials of degree $\leq n$. Thus, we have that $\mathcal{P}_-(X_{n+1})$ is the space generated by the derivatives (of all orders) of the monomial $t_1\cdots t_n$.\

Via proposition \ref{r4}, we know that for every generator $m_s$ of $\mathcal{P}_-(X_{n+1})$, there is a corresponding generator of $\mathcal{D}_-(X_{n+1})$ which is the unique internal $s$-monic polynomial in $X_{n+1}$.

Now, let's consider the polynomial $q_n(t)$, and let $q'(t):=D_1^{k_1}\cdots D_n^{k_n}q_n(t)$. We then know that $q'(t) \in \mathcal{D}_-(X_{n+1})$ and that the square-free monomial $D_1^{k_1}\cdots D_n^{k_n} t_1\cdots t_n$ is a term of $q'(t)$. Let $s$ be the exponent vector of $D_1^{k_1}\cdots D_n^{k_n} t_1\cdots t_n$. Then we know that $s$ is an internal parking function, meaning that $q'(t)$ must be the unique internal $s$-monic polynomial and thus is also a generator of $\mathcal{D}_-(X_{n+1})$.\

Every generator of $\mathcal{D}_-(X_{n+1})$ is a derivative of $q_n(t)$, and every derivative of $q_n(t)$ is a generator of $\mathcal{D}_-(X_{n+1})$. Thus, we have that $\mathcal{D}_-(X_{n+1})$ is the polynomial space generated by the derivatives (of all orders) of the polynomial $q_n(t)$ as desired.
\end{proof}

\subsection{Proving Theorems \ref{t1} and \ref{t2} From the Introduction}

From theorem \ref{t5} and corollary \ref{c4}, the proofs of theorems \ref{t1} and \ref{t2} from the introduction become clear. Let us now prove these theorems. Recall that we denote partial differentiation with respect to $t_i$ by $D_i$; i.e. with $p_i : \mathbb{R}^n\rightarrow \mathbb{R}^n, t\mapsto t_i,$  we have $D_i:=p_i(D)$, and $D_0 := 0$.

\begin{reptheorem}{t1}
The polynomial $q_n(t)$ is the only polynomial (up to normalization) of degree $n$ that is annihilated by each of the following differential operators $$D_i(D_i-D_{i-1}), \mbox{ } i=1,...,n.$$ Moreover, let $\mathcal{P}_{n,j}$ be the subspace of homogeneous polynomials (in $n$-indeterminates) of degree $j$ that are annihilated by each of the above differential operators. Then:
\begin{enumerate}
\item $\mathcal{P}_{n,j}$ lies in the span of the translates of $q_n$.
\item dim $\mathcal{P}_{n,j}$ = ${n \choose j}$.
\end{enumerate}
\end{reptheorem}

\begin{proof}
We show in the proof of theorem \ref{t5} that $q_n(t)$ is the unique internally monic polynomial of maximal degree in $\mathcal{D}_-(X_{n+1})$. We also have that $q_n(t)$ lies in the dual central zonotopal space, $\mathcal{D}(X_n)$, meaning that $q_n(t)$ is annihilated by $D_i(D_i-D_{i-1}),$ for i=1,...,n, by definition. Corollary \ref{c4} of this note can be rephrased as saying that the space of translates of $q_n(t)$ is $\mathcal{D}_-(X_{n+1})$. We then can see that for a given degree $j$, we have that $\mathcal{P}_{n,j}\subset \mathbb{K}[t_1,...,t_n]_j$, giving \begin{equation}\label{incl}(\mathcal{D}_-(X_{n+1})\cap \mathcal{P}_{n,j})\subseteq (\mathcal{D}_-(X_{n+1})\cap \mathbb{K}[t_1,...,t_n]_j).\end{equation} In other words, we have that $\mathcal{P}_{n,j}\subset \mathcal{D}_-(X_{n+1})$; i.e. we have that $\mathcal{P}_{n,j}$ lies in the span of the translates of $q_n(t)$. Furthermore, we can actually see that our inclusion (\ref{incl}) is actually an equality, as the dimension of both $(\mathcal{D}_-(X_{n+1})\cap \mathcal{P}_{n,j})$ and $(\mathcal{D}_-(X_{n+1})\cap \mathbb{K}[t_1,...,t_n]_j)$ is ${n \choose j}$. 

The dimension of $(\mathcal{D}_-(X_{n+1})\cap \mathbb{K}[t_1,...,t_n]_j)$ is given to us by corollary \ref{c4}. The dimension of  $(\mathcal{D}_-(X_{n+1})\cap \mathcal{P}_{n,j})$ is gotten by counting the number of internal parking functions of the broken wheel graph.\

The uniqueness of $q_n(t)$ can then be quickly seen by the fact that $\mathcal{P}_{n,n}\subset \mathcal{D}_-(X_{n+1})$ and that $q_n(t)$ is the unique internally monic polynomial of maximal degree in $\mathcal{D}_-(X_{n+1})$.
\end{proof}

\begin{example}\label{e5}
Let's consider $n=2$. We then have that $q_2(t)=t_2^2/2+t_1t_2$. The theorem above then tells us that $q_2(t)$ is the only polynomial which is annihilated by $D_2(D_2-D_1)$ and $D_1^2$.
\end{example}

The following result gives another characterization of $q_n(t)$. It should be noted that while the following result resembles the theorem \ref{t1}, it is the result of a rather different observation.

\begin{reptheorem}{t2}
The polynomial $q_n(t)$ is the only polynomial $q(t)$ (in $n$ variables) that satisfies the following two properties:
\begin{enumerate}
\item With $m$ the square-free monomial $$m: t\mapsto \prod_{i=1}^{n} t_i,$$ the monomial support of $(q-m)(t)$ is disjoint of the monomial support of the polynomial $$t\mapsto \prod_{i=1}^n (t_i+t_{i-1}), \mbox{ } t_0 := 0.$$
\item $q(t)$ is annihilated by each of the following differential operators: $$(D_{j+1}-D_j)(\prod_{k=i}^j D_k)(D_i-D_{i-1}), \mbox{ } 1\leq i\leq < n$$ and $$ (\prod_{k=1}^n D_k)(D_i-D_{i-1}),\mbox{ } 1\leq i\leq n.$$
\end{enumerate}
\end{reptheorem}

\begin{proof}
For part $(1)$, we can see that the polynomial $t\mapsto \prod_{i=1}^n (t_{i-1}+t_i)$ is exactly the polynomial $p_{n,0}(t)$, introduced in proposition \ref{p1}. From proposition \ref{p1}, we know that $S_{max}(BW_n)=\mathrm{supp}\mbox{ } p_{n,0}(t)$. In other words, $$\mathrm{supp}\mbox{ } p_{n,0}(t)=\{s\in S(BW_n) : |s|=n\}.$$ We know from \cite{PS} that the only maximal parking function in $C_n$ which gives rise to a term $m_s/s!$ of $q_n(t)$ is $s=(1,...,1)$. Thus, we have that $\mathrm{supp}\mbox{ } p_{n,0}(t) \cap \mathrm{supp}\mbox{ } q_n(t) = \{(1,...,1)\}$. And so, by subtracting the monomial $m$ from $q_n(t)$, we can quickly see that $$\mathrm{supp}\mbox{ } p_{n,0}(t) \cap \mathrm{supp}\mbox{ } (q_n-m)(t) = \emptyset.$$ For part $(2)$, we again know that $q_n(t)$ must be eliminated by the differential operators in question as $q_n(t)$ lie in $\mathcal{D}(X_n)$. We can see that $q_n(t)$ is the only polynomial in $n$ variables which satisfies both $(1)$ and $(2)$ because, as $q_n(t)$ is uniquely the polynomial in $\mathcal{D}(X_n)$ which corresponds to the monomial $m$ in the basis of $\mathcal{D}(X_n)$, we have that $q_n(t)$ is the only polynomial satisfying (2) which also satisfies (1).
\end{proof}

\begin{example}\label{e6}
Considering $n=2$, we have that $(q_2-m)(t)=t_2^2/2$. We can then see that $\mathrm{supp}\mbox{ } (q_2-m)(t)=\{(0,2)\}$ and $\mathrm{supp}\mbox{ } t_1(t_1+t_2)=\{(2,0),(1,1)\}$ are disjoint, and thus that condition $(1)$ of the theorem above is satisfied. For condition $(2)$, we can see that $q_2(t)$ is annihilated by both $D_1^2(D_2-D_1)$ and $D_1^2$.
\end{example}

\begin{example}\label{e7}
For $n=3$, we have that $(q_3-m)(t)=t_3^2/6+t_3^2(t_1+t_2)/2+t_3t_2^2/2$. For condition $(1)$, we can see that $\mathrm{supp}\mbox{ } t_1(t_1+t_2)(t_2+t_3)=\{(2,1,0),(2,0,1),(1,2,0),(1,1,1)\}$ is disjoint from $\mathrm{supp}\mbox{ } (q_3-m)(t)=\{(0,0,3),(1,0,2),(0,1,2),(0,2,1)\}$. For condition $(2)$ we can see that $q_3(t)$ is annihilated by the operators $$D_3(D_3-D_2),\mbox{ } D_3D_2(D_2-D_1),\mbox{ } (D_3-D_2)D_2(D_2-D_1),\mbox{ } (D_2-D_1)D_1^2.$$
\end{example}

\subsection{A Polyhedral Subdivision Relating to the Associahedron}\label{assoc}

Pitman and Stanley \cite{PS} describe a polyhedral subdivision of $Q_n(t)$ closely related to the \textit{associahedron}. The \Dfn{associahedron} $\mathfrak{A}_n$ is a polytope whose vertices correspond to the triangulations of the $(n+3)$-gon and whose edges correspond to \Dfn{flips} of diagonal edges; i.e. removing one diagonal edge from a given triangulation and replacing it with another diagonal edge. This section is included as a review of this polyhedral subdivision of $Q_n(t)$ which Pitman and Stanley \cite{PS} present and how the volume of each polytope in their subdivision is captured by the zonotopal algebra of the broken wheel graph. This connection was the main inspiration for the generalized broken wheel graph appearing in the coming sections.

\begin{figure}[htbp]
\begin{center}
\includegraphics[scale=.4]{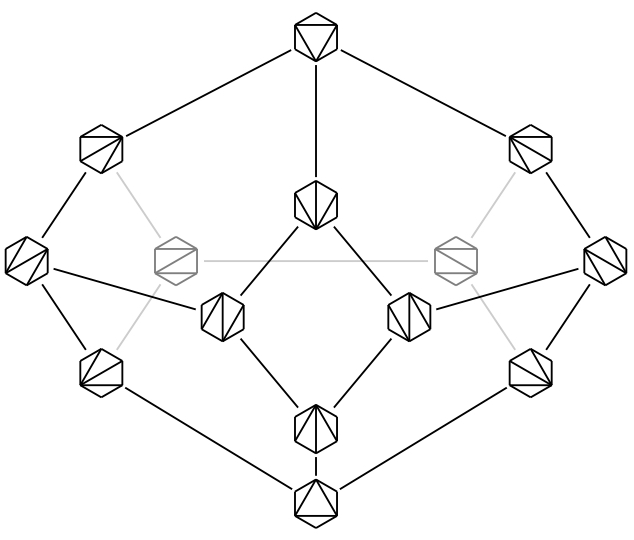}
\caption{The associahedron $\mathfrak{A}_3$.}
\label{default}
\end{center}
\end{figure}

Its dual is a simplicial complex whose vertices are diagonals of a convex $(n+3)$-gon, simplices are the partial triangulations of the $(n+3)$-gon, and whose maximal simplices are triangulations of the $(n+3)$-gon. Pitman and Stanley \cite{PS} construct a fan $F_n$ whose chambers are indexed by \textit{plane binary trees} with $n$ internal vertices and prove the following result:

\begin{proposition}[Pitman and Stanley \cite{PS}] The face poset of the fan $F_n$, with a top element adjoined, isomorphic to the dual $\mbox{dec}(E_{n+2})^*$ of the face lattice of the associahedron. 
\end{proposition}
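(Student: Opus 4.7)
The plan is to construct an explicit inclusion-reversing bijection between the faces of $F_n$ (with a top element adjoined) and the simplices of $\mbox{dec}(E_{n+2})^*$, which are precisely the partial triangulations of the convex $(n+3)$-gon. The bridge is the classical Catalan bijection between plane binary trees with $n$ internal vertices and triangulations of the $(n+3)$-gon: fix a root edge of the polygon; the triangle incident to the root edge is the root of the tree, and its two other edges recursively index the left and right subtrees via the triangulations of the two smaller polygons it cuts off. Composing with the Pitman--Stanley indexing of chambers of $F_n$ by plane binary trees gives a bijection between the chambers of $F_n$ and the full triangulations of the $(n+3)$-gon, i.e. the maximal simplices of $\mbox{dec}(E_{n+2})^*$.

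To extend the bijection to lower-dimensional faces, for a face $\sigma$ of $F_n$ I would define $\Delta_\sigma$ to be the intersection (as sets of diagonals) of the triangulations corresponding to the chambers of $F_n$ containing $\sigma$. First I would verify this is a non-crossing set of diagonals, hence a partial triangulation. For codimension-$1$ faces, adjacent chambers correspond to binary trees related by a single rotation, and hence to triangulations related by a single diagonal flip, so $\Delta_\sigma$ has $n$ diagonals. More generally, a codimension-$k$ face of $F_n$ is an intersection of walls, and $\Delta_\sigma$ is the corresponding intersection of diagonal sets, yielding a partial triangulation with $n+1-k$ diagonals; the adjoined top element of $F_n$ corresponds to the empty partial triangulation. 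The map $\sigma \mapsto \Delta_\sigma$ is then shown to be a bijection onto partial triangulations, and inclusion-reversing: $\tau \supseteq \sigma$ in the face poset of $F_n$ iff every chamber containing $\tau$ contains $\sigma$, iff $\Delta_\tau \subseteq \Delta_\sigma$, which is exactly the face relation in the dual lattice $\mbox{dec}(E_{n+2})^*$.

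The main technical obstacle is verifying the geometric dictionary between walls of $F_n$ and diagonal flips. Concretely, one must check that each facet of a chamber of $F_n$ corresponds, under the binary-tree-to-triangulation bijection, to a specific diagonal of the triangulation, and that crossing that facet flips precisely that diagonal. This amounts to examining the facet-defining inequalities of the Pitman--Stanley chambers, which arise from the polyhedral subdivision of $Q_n(t)$ described in this section, and matching them with the combinatorics of rotations on plane binary trees. Once this wall-diagonal correspondence is pinned down, the verification that $\Delta_\sigma$ is well-defined and non-crossing at all codimensions, and the inclusion-reversing property, follow by a formal comparison of face posets.
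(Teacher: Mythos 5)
The paper does not prove this statement: it is quoted verbatim from Pitman and Stanley \cite{PS} as background for the subdivision $\Gamma_n$ of $Q_n(t)$, so there is no in-paper argument to compare yours against. Judged on its own, your outline follows the natural (and, as far as I recall, essentially the Pitman--Stanley) route: identify chambers of $F_n$ with plane binary trees, pass through the Catalan bijection to triangulations, match walls with rotations/flips, and propagate to lower faces by intersecting diagonal sets. That scaffold is sound, and you correctly identify the load-bearing step as the wall--flip dictionary; but that step is exactly what is asserted rather than proved in your write-up, and until the facet-defining inequality of $F_n$ separating two trees is explicitly matched to the unique diagonal on which their triangulations differ, the whole argument is a plan rather than a proof.

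Two smaller points to fix. First, your diagonal counts are off. The fan $F_n$ lives in $\R^{n-1}$, and its chambers are indexed by binary trees with $n$ internal vertices; under the Catalan bijection these correspond to triangulations of an $(n+2)$-gon (this is what $E_{n+2}$ denotes in the statement), which carry $n-1$ diagonals, matching the dimension $n-1$ of a chamber. A codimension-$k$ face therefore corresponds to a partial triangulation with $n-1-k$ diagonals, the origin to the empty set, and the adjoined top element to the improper maximum of $\mathrm{dec}(E_{n+2})^*$. Your figures ``$n$ diagonals'' at codimension one and ``$n+1-k$'' in general, and your use of the $(n+3)$-gon, are each off by one; the confusion is understandable given that the paper's own definition of $\mathfrak{A}_n$ uses the $(n+3)$-gon while the cited proposition refers to $E_{n+2}$, but a careful proof must fix one convention and verify the ranks agree. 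Second, you should not merely assert that the intersection $\Delta_\sigma$ is non-crossing and has the right cardinality: you need to show that a codimension-$k$ cone is contained in the closures of exactly the chambers whose triangulations all contain a fixed $(n-1-k)$-set of diagonals, and that every such set arises. This is again a consequence of the wall--flip dictionary once that is established, so the honest way to close the argument is to work out that dictionary from the explicit inequalities $x_{i+1}+\cdots+x_j \lessgtr 0$ that cut out the cones of $F_n$.
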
 

A \Dfn{plane binary tree} is a plane tree such that each vertex has zero or two substrees. If a vertex has zero subtrees, then we call it a \Dfn{leaf}, and if a vertex has two subtrees, then we call it an \Dfn{internal vertex}. The construction of the fan $F_n$ is as follows. First consider a binary tree $T$. Do a depth-first search of $T$, labelling its internal vertices 1 through $n$ in the order they are encountered from above. This labelling is referred to by Pitman and Stanley as the \Dfn{binary search labelling}. 

\begin{figure}[htbp]
\begin{center}
\begin{tikzpicture}
\tikzset{vertex/.style = {shape=circle,draw,minimum size=1.5em}}
\tikzset{edge/.style = {-,> = latex'}}
\node[vertex] (a) at  (-1,1) {1};
\node[vertex] (b) at  (-.5,2) {2};
\node[vertex] (c) at  (0,0) {3};
\node[vertex] (d) at  (1,1) {4};
\node[vertex] (e) at  (-1,3) {};
\node[vertex] (f) at  (0,3) {};
\node[vertex] (g) at  (.5,2) {};
\node[vertex] (h) at  (-1.5,2) {};
\node[vertex] (i) at  (1.5,2) {};
\draw[edge] (c) to (a);
\draw[edge] (c) to (d);
\draw[edge] (a) to (b);
\draw[edge] (a) to (h);
\draw[edge] (d) to (g);
\draw[edge] (d) to (i);
\draw[edge] (b) to (f);
\draw[edge] (b) to (e);
\end{tikzpicture}
\caption{A plane tree with the binary search labelling.}
\label{default}
\end{center}
\end{figure}
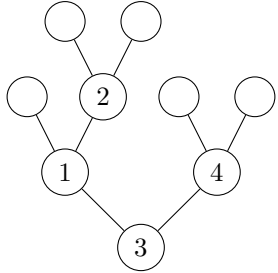

If an internal vertex of $T$ with label $i$ is covered by $j$, then associate to the pair $(i,j)$ the inequality $$x_{i+1}+x_{i+2}+\cdots + x_{j}\leq 0$$ if $i<j$ and the inequality $$x_{j+1}+x_{j+2}+\cdots + x_i\geq 0$$ if $i>j$. We then have a system of $n-1$ homogeneous linear equations which define a simplicial cone in $\mathbb{R}^{n-1}$. These cones, as they range over all plane binary trees with $n$ internal vertices, form the chambers of a complete fan, denoted $F_n$, in $\mathbb{R}^{n-1}$.\

Let $T\in \mathcal{T}_n$, where $\mathcal{T}_n$ is the set of binary trees with $n$ internal vertices. Pitman and Stanley \cite{PS} then construct sets $\triangle_T(x)$ which form the maximal faces of a polyhedral decomposition $\Gamma_n$ of $Q_n(x)$ whose set of interior faces, ordered by inclusion, is isomorphic to the face lattice of the dual associahedron. They also give the volume of these maximal faces.

\begin{proposition}[Pitman and Stanley \cite{PS}]\label{assocvolume} We have the following:
\begin{enumerate}
\item The sets $\triangle_T(x)$, $T\in \mathcal{T}_n$, form the maximal faces of a polyhedral decomposition $\Gamma_n$ of $Q_n(x)$.
\item Let $k(T)=(k_1,...,k_n)$, $T\in \mathcal{T}_n$. Then Vol($\triangle_T(x)$)$=\frac{x_1^{k_1}}{k_1!}\cdots \frac{x_n^{k_n}}{k_n!}$.
\item The set of interior faces of $\Gamma_n$, ordered by inclusion, is isomorphic to the face lattice of the dual associahedron.
\end{enumerate}
\end{proposition}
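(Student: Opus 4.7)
The plan is to follow Pitman and Stanley's original argument, treating the three conclusions in turn and leveraging the recursive structure of plane binary trees throughout.

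For part (1), I would verify the three defining properties of a polyhedral decomposition: each $\triangle_T(x)$ is contained in $Q_n(x)$, the interiors are pairwise disjoint, and the union is all of $Q_n(x)$. Containment is immediate from comparing the defining inequalities of $\triangle_T(x)$ (read off from the binary-search-labelled tree $T$) to the partial-sum inequalities $\sum_{i=j}^n r_i \le \sum_{i=j}^n x_i$ cutting out $Q_n(x)$. The heart of the argument is a constructive inverse: given a generic $r \in Q_n(x)$, produce a unique $T(r) \in \mathcal{T}_n$ with $r \in \triangle_{T(r)}(x)$. I would build $T(r)$ algorithmically, using comparisons of partial sums of the $r_i$'s against partial sums of the $x_i$'s to decide the shape of each subtree recursively from the root. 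Because this procedure is the precise inverse of the one that attaches inequalities to the edges of $T$, it simultaneously yields both interior disjointness and exhaustion.

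For part (2), I would induct on $n$, exploiting the recursive structure just exposed. Splitting $T$ at its root into a left subtree $T_L$ with $\ell$ internal vertices and a right subtree $T_R$ with $n-1-\ell$ internal vertices, I would show by a carefully chosen affine change of coordinates that $\triangle_T(x)$ decomposes as a product $\triangle_{T_L}(x') \times \triangle_{T_R}(x'')$, where $x'$ and $x''$ are obtained from $x$ by a suitable splitting and reindexing. Since the exponent vector $k(T) = (k_1,\ldots,k_n)$ satisfies the parallel recursion (the entries coming from $T_L$ and $T_R$ assemble into $k(T)$ together with the contribution from the root), the Fubini-type factorization combined with the inductive hypothesis immediately gives the claimed monomial volume $\prod_{i=1}^n x_i^{k_i}/k_i!$, with the base case $n=1$ being a length-$x_1$ segment.

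For part (3), I would invoke the classical bijection between $\mathcal{T}_n$ and triangulations of a convex $(n+2)$-gon. Two maximal cells $\triangle_T$ and $\triangle_{T'}$ share a facet of $\Gamma_n$ precisely when $T$ and $T'$ differ by a single tree rotation, and tree rotations correspond under the bijection to diagonal flips in the associated triangulations. More generally, lower-dimensional interior faces of $\Gamma_n$ are intersections $\bigcap_{T \in \mathcal{S}} \triangle_T$ over sets $\mathcal{S}$ of trees sharing a common partial structure, and these sets are indexed by partial triangulations of the $(n+2)$-gon. Reversing inclusions identifies this face lattice with that of the dual associahedron.

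The main obstacle is part (1): both the well-definedness of the point-to-tree map $r \mapsto T(r)$ and its inversion of the tree-to-polytope construction rest on carefully unpacking how the binary search labelling converts tree structure into a nested system of partial-sum inequalities. Once that algorithmic correspondence is in hand, the product decomposition needed for (2) and the flip-rotation correspondence needed for (3) follow essentially as bookkeeping on top of the same recursive splitting at the root.
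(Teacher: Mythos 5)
This proposition is stated in the paper with attribution to Pitman and Stanley \cite{PS} and is \emph{not} proved there: after the statement the paper merely unpacks the definitions of $\varphi$, $\triangle_T(x)$, and $k(T)$ so the reader can parse the claim, and then moves on. So there is no in-paper argument to compare you against; the only meaningful comparison is to the construction via $\varphi$ that the paper imports from [PS].

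Measured against that, your sketch is plausible but does not engage with the object that actually defines $\triangle_T(x)$. In the paper (following [PS]), $\triangle_T(x):=\{y\in\mathbb{R}_+^n:\varphi(x,y)=(\overline{T},\ell)\text{ for some }\ell\}$, where $\varphi$ is the depth-first ``go up $x_i$, down $y_i$'' map onto planted plane binary trees with edge lengths. Because $\varphi$ is (for generic $x,y$) a bijection from $Q_n(x)$ onto the set of such trees with the prescribed total, part (1) is essentially immediate: the cells are by construction the fibers of a surjection, so they tile. Your plan rebuilds a point-to-tree map $r\mapsto T(r)$ from partial-sum inequalities, which is in effect re-deriving the combinatorial type of $\varphi(x,r)$ by hand; that can be made to work, but it is additional machinery relative to what the definition already hands you, and the crucial ``these inequalities are exactly those cut out by $\varphi^{-1}$'' step is stated rather than established. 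For part (2), the root-split product $\triangle_{T_L}\times\triangle_{T_R}$ has dimension $n-1$, one short of $\dim\triangle_T=n$, so you need an explicit extra one-dimensional factor coming from the root; moreover the chain definition of $k_i$ (left-spine chains $i<j_1<\cdots<j_r$) can cross between $T_L$ and the root under the in-order labelling, so the claimed ``parallel recursion'' for $k(T)$ is exactly where the content lies and is currently asserted, not checked. Part (3) via the rotation/flip dictionary is standard. None of this is fatal, but the proposal underestimates the bookkeeping and bypasses the $\varphi$-based definition that makes (1) nearly formal.
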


In order to understand this result, we must define the objects mentioned in it; let us do this. Given a plane tree $T$ and $E$ the set of edges of $T$, let's define a function $\ell : E\rightarrow \mathbb{R}_+$ sending every edge $e$ of $T$ to a positive real number $\ell(e)$. We will then call the pair $(T,\ell)$ a \Dfn{plane tree with edge lengths}. Now fix a real number $s>0$ which we would like to be the sum of the edge lengths of a plane tree. Let $x=(x_1,...,x_n)\in \mathbb{R}_+^n$ be such that $\sum x_i < s$ and $y=(y_1,...,y_n)\in \mathbb{R}_+^n$ with $$y_1+\cdots y_i\leq x_1+\cdots x_i, \mbox{ } 1\leq i\leq n.$$ For each pair $(x,y)$, we can assign a plane tree with edge lengths $\varphi(x,y)=(\overline{T}, \ell)$ as described in \cite[p. 32]{PS}. We start with a root and traverse the tree in depth-first order:
\begin{enumerate}
\item Go up distance $x_i$, then down distance $y_i$, for $1\leq i\leq n$.
\item Finish the tree by going up distance $x_{n+1}=s-x_1-\cdots - x_n$ and down distance $y_{n+1}=s-y_1-\cdots - y_n$.
\end{enumerate}
We then have a planted (i.e. the root as one child) plane binary tree with edge lengths. Let $T$ be the tree obtained by removing the roots and its incident edge from $\overline{T}$. Now let $x=(x_1,...,x_n)$ be a sequence with $\sum x_i <s$ and let $T\in \mathcal{T}_n$ be a plane binary tree without edge lengths. We define $$\triangle_T(x):=\{y\in \mathbb{R}_+^n : \varphi(x,y)=(\overline{T},\ell) \mbox{ for some } \ell\}.$$ For $T\in \mathcal{T}_n$, with the binary search labeling of its internal vertices, let $k(T)=(k_1,...,k_n)\in \mathbb{N}^n$ such that:
\begin{enumerate}
\item $k_i=0$ if the left child of vertex $i$ is an internal vertex.
\item If the left child of vertex $i$ is an endpoint, then let $k_i$ be the largest integer $r$ such that there exists a chain $i<j_1<\cdots j_r$ of internal vertices where $j_h$ is a left child of $j_{h+1}$ for $1\leq h\leq r-1$.
\end{enumerate}

Proposition \ref{assocvolume} tells us that the volume of every polytope in this particular subdivision of the Stanley-Pitman polytope $Q_n(t)$ is a term of $q_n(t)$, and that all terms of $q_n(t)$ appear as such volumes. So not only is the zonotopal algebra of the broken wheel graph capturing the volume of $Q_n(t)$, it is also capturing the volumes of the polytopes of a polyhedral subdivision of $Q_n(t)$ whose set of interior faces, ordered by inclusion, is isomorphic to the face lattice of the dual associahedron. This observation was our motivation for studying the volumes of polyhedral subdivisions in terms of zonotopal algebras and lead us to the generalized broken wheel graph.

\section{The Zonotopal Algebra of the Generalized Broken Wheel Graph}\label{GBWG}

While the zonotopal algebra of the broken wheel graph and its connection to the Stanley-Pitman polytope are rich in their own right, our study reaches even further. We will consider the zonotopal algebra of the \textit{generalized broken wheel graph} $GBW_n(T)$ over a tree $T$ with $n$ vertices and how it relates to the regular simplex $\mathfrak{Sim}_n(t_1,...,t_n)$ with positive parameters $(t_i)_{i\in [n]}$, defined by the inequalities $$\sum_{i=1}^n r_i\leq \sum_{i=1}^n t_i, \mbox{ } r_i\in \mathbb{R}_+^n,$$ where the $(r_i)_{i\in [n]}$ are variables. Since our set-up is homogeneous, we will assume without loss of generality that $$\sum_{i=1}^n t_i=1.$$ We will show how to  partition $\mathfrak{Sim}_n(t_1,...,t_n)$ into $2^{n-1}$ polytopes, where each polytope's volume is captured by the zonotopal algebra of $GBW_n(T)$. We begin by outlining the set-up necessary to define the generalized broken wheel graph.

\subsection{Constructing the Generalized Broken Wheel Graph} 

Our first step in this process is to enumerate all rooted trees with $n$ vertices. So, for example, there are two rooted trees with $3$ vertices, which we will respectively call the ``line tree'' and the ``fork tree", as illustrated in figure \ref{lineforktree}. For convenience, let's generally label the vertices of any rooted trees we consider $1$ through $n$ and always assume that the root of each tree is $1$.

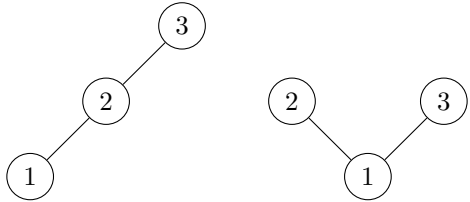
\begin{figure}[h!]
\begin{center}
\begin{tikzpicture}
\tikzset{vertex/.style = {shape=circle,draw,minimum size=1.5em}}
\tikzset{edge/.style = {-,> = latex'}}
\node[vertex] (b) at  (1,1) {1};
\node[vertex] (c) at  (2,2) {2};
\node[vertex] (d) at  (3,3) {3};
\draw[edge] (b) to (c);
\draw[edge] (c) to (d);
\end{tikzpicture}
\qquad
\begin{tikzpicture}
\tikzset{vertex/.style = {shape=circle,draw,minimum size=1.5em}}
\tikzset{edge/.style = {-,> = latex'}}
\node[vertex] (b) at  (0,1) {1};
\node[vertex] (c) at  (-1,2) {2};
\node[vertex] (d) at  (1,2) {3};
\draw[edge] (c) to (b);
\draw[edge] (d) to (b);
\end{tikzpicture}
\caption{The line tree (to the left) and the fork tree (to the right).}
\label{lineforktree}
\end{center}
\end{figure}

There are $2^{n-1}$ different ways to direct the edges of a rooted tree $T$. For $n=3$, we have four directed trees from the line tree and four from the fork tree, as illustrated in figure \ref{directedlineforktree}.

\begin{figure}[h!]
\begin{center}
\begin{tikzpicture}
\tikzset{vertex/.style = {shape=circle,draw,minimum size=1.5em}}
\tikzset{edge/.style = {->,> = latex'}}
\node[vertex] (b) at  (1,1) {1};
\node[vertex] (c) at  (2,2) {2};
\node[vertex] (d) at  (3,3) {3};
\draw[edge] (b) to (c);
\draw[edge] (c) to (d);
\end{tikzpicture}
\qquad
\begin{tikzpicture}
\tikzset{vertex/.style = {shape=circle,draw,minimum size=1.5em}}
\tikzset{edge/.style = {->,> = latex'}}
\node[vertex] (b) at  (1,1) {1};
\node[vertex] (c) at  (2,2) {2};
\node[vertex] (d) at  (3,3) {3};
\draw[edge] (b) to (c);
\draw[edge] (d) to (c);
\end{tikzpicture}
\qquad
\begin{tikzpicture}
\tikzset{vertex/.style = {shape=circle,draw,minimum size=1.5em}}
\tikzset{edge/.style = {->,> = latex'}}
\node[vertex] (b) at  (1,1) {1};
\node[vertex] (c) at  (2,2) {2};
\node[vertex] (d) at  (3,3) {3};
\draw[edge] (c) to (b);
\draw[edge] (c) to (d);
\end{tikzpicture}
\qquad
\begin{tikzpicture}
\tikzset{vertex/.style = {shape=circle,draw,minimum size=1.5em}}
\tikzset{edge/.style = {->,> = latex'}}
\node[vertex] (b) at  (1,1) {1};
\node[vertex] (c) at  (2,2) {2};
\node[vertex] (d) at  (3,3) {3};
\draw[edge] (c) to (b);
\draw[edge] (d) to (c);
\end{tikzpicture}\vspace{.15in}

\begin{tikzpicture}
\tikzset{vertex/.style = {shape=circle,draw,minimum size=1.5em}}
\tikzset{edge/.style = {->,> = latex'}}
\node[vertex] (b) at  (0,1) {1};
\node[vertex] (c) at  (-1,2) {2};
\node[vertex] (d) at  (1,2) {3};
\draw[edge] (b) to (c);
\draw[edge] (b) to (d);
\end{tikzpicture}
\qquad
\begin{tikzpicture}
\tikzset{vertex/.style = {shape=circle,draw,minimum size=1.5em}}
\tikzset{edge/.style = {->,> = latex'}}
\node[vertex] (b) at  (0,1) {1};
\node[vertex] (c) at  (-1,2) {2};
\node[vertex] (d) at  (1,2) {3};
\draw[edge] (c) to (b);
\draw[edge] (b) to (d);
\end{tikzpicture}
\qquad
\begin{tikzpicture}
\tikzset{vertex/.style = {shape=circle,draw,minimum size=1.5em}}
\tikzset{edge/.style = {->,> = latex'}}
\node[vertex] (b) at  (0,1) {1};
\node[vertex] (c) at  (-1,2) {2};
\node[vertex] (d) at  (1,2) {3};
\draw[edge] (b) to (c);
\draw[edge] (d) to (b);
\end{tikzpicture}
\qquad
\begin{tikzpicture}
\tikzset{vertex/.style = {shape=circle,draw,minimum size=1.5em}}
\tikzset{edge/.style = {->,> = latex'}}
\node[vertex] (b) at  (0,1) {1};
\node[vertex] (c) at  (-1,2) {2};
\node[vertex] (d) at  (1,2) {3};
\draw[edge] (c) to (b);
\draw[edge] (d) to (b);
\end{tikzpicture}
\caption{The possible ways of directing the edges of the line and fork trees.}
\label{directedlineforktree}
\end{center}
\end{figure}
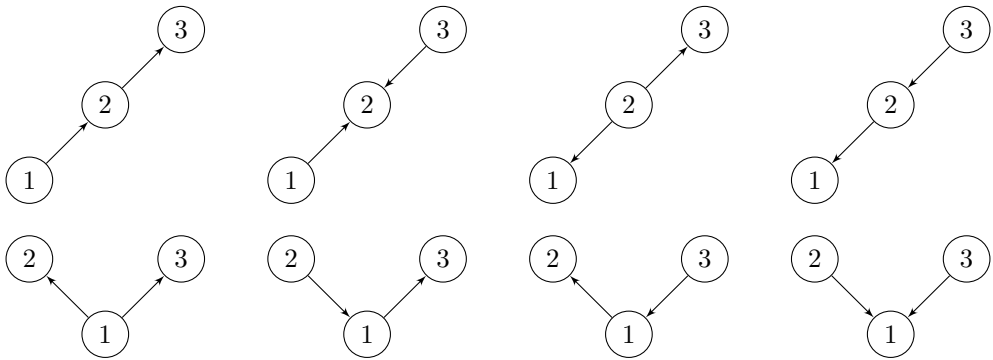

We can identify each of the $2^{n-1}$ directed trees constructed via directing the edges of a rooted tree $T$ with an $n$-tuple $k\in \{\pm 1\}^n$, where $k(1):=1$ and, letting $p$ be the parent vertex of $i$, $$k(i):=\left\{\begin{array}{cc}1 & \mbox{if the edge }   (p,i) \mbox{ is directed towards } i \\-1 & \mbox{if the edge }   (p,i) \mbox{ is directed towards } p\end{array}\right..$$ Let us denote all directed trees constructed via directing the edges of a root tree $T$ according to $k$ by $T_k$. Let $T_{k, j}$ be the subtree of $T_k$ in which $j$ is the root. For each directed tree $T_k$ we define a corresponding polytope $Q_{T_k}(t_1,...,t_n)$, which is the collection of all points $r\in \mathbb{R}_+^n$ that satisfy, for each $j\in T_k$, the set of inequalities, $$\sum_{i\in T_{k, j}} r_i \mbox{ } (\leq,\geq)_j \sum_{i\in T_{k, j}} t_i, \mbox{ } j=1,...,n,$$ where $$(\leq,\geq)_j:=\left\{\begin{array}{cc}\leq & k(j)=1 \\\geq & k(j)=-1\end{array}\right..$$

As $k(1)=1$, we have that one of the above inequalities will always be $$\sum_{i=1}^n r_i\leq 1,$$ which defines our regular simplex $\mathfrak{Sim}_n(t_1,...,t_n)$. Thus the systems of inequalities for each of our $2^{n-1}$ directed trees together give a partition of $\mathfrak{Sim}_n(t_1,...,t_n)$ into $2^{n-1}$ polytopes. For $\mathfrak{Sim}_3(t_1,t_2,t_3)$, the inequalities for the line tree are displayed in figure \ref{n3exampleline} and the inequalities for the fork tree are displayed in figure \ref{n3examplefork}.

Now take a tree $T$ with $n$ vertices. For each directed tree $T_k$, we will complete it to a particular directed graph $GBW_n(T_k)$, which we will refer to as the \Dfn{generalized broken wheel graph over $T_k$}. We construct $GBW_n(T_k)$ in the following way:
\begin{enumerate}
\item Add one more vertex, labelled $0$.
\item Add two edges from $0$ to the root vertex.
\item Add one edge from $0$ to each of the $n-1$ vertices of $T_k$.
\end{enumerate}
Let $GBW_n(T)$ denote the graph $GBW_n(T_k)$ without directed edges; $GBW_n(T)$ is the same for any $k$ and will be referred to as the \Dfn{generalized broken wheel graph over $T$}. In figure \ref{n3exampleline} we can see the graphs resulting from the line tree and in figure \ref{n3examplefork} we can see the graphs resulting from the fork tree. Once we have completed a directed tree $T_k$ to $GBW_n(T_k)$, we will assign a weight to each of its vertices. The weight $w_{T_k}(v)$ of each vertex $v$ of $GBW_n(T_k)$ will be equal to its indegree minus 1: $w_{T_k}(v):=\mbox{indeg}(v)-1.$ For instance, the weights of the $n=3$ graphs are displayed in blue above each vertex in figures \ref{n3exampleline} and \ref{n3examplefork}.

It is significant to note that $GBW_n(T)$, where $T$ is the ``line'' tree on $n$ vertices, is exactly the broken wheel graph $BW_n$; hence the name \textit{generalized} broken wheel graph. In fact, the zonotopal algebra derived from $BW_n$ is exactly the same as that which is derived from $GBW_n(T_k)$, where $T$ is a line tree and $k=(1,...,1)$.

\begin{figure}[h!]
\begin{center}
$$\begin{tabular}{*{2}{|l|}*{2}{|l|}*{2}{|l|}*{2}{|l|}}\hline \mbox{k} & (1,1,1) & (1,1,-1) \\\hline\hline \mbox{$GBW_n(T_k)$} & \begin{tikzpicture}
\tikzset{vertex/.style = {shape=circle,draw,minimum size=1.5em}}
\tikzset{edge/.style = {->,> = latex'}}
\node[vertex] (a) at  (2,0) {0};
\node[vertex] (b) at  (1,1) {1};
\node[vertex] (c) at  (2,2) {2};
\node[vertex] (d) at  (3,3) {3};
\node[above left] at (b.60) {{\color{blue} 1}};
\node[above left] at (c.60) {{\color{blue} 1}};
\node[above left] at (d.60) {{\color{blue} 1}};
\draw[edge] (b) to (c);
\draw[edge] (c) to (d);

\draw[edge] (a)  to[bend left] (b);
\draw[edge] (a)  to[bend right] (b);

\draw[edge] (a)  to[bend right] (c);
\draw[edge] (a)  to[bend right] (d);
\end{tikzpicture} & \begin{tikzpicture}
\tikzset{vertex/.style = {shape=circle,draw,minimum size=1.5em}}
\tikzset{edge/.style = {->,> = latex'}}
\node[vertex] (a) at  (2,0) {0};
\node[vertex] (b) at  (1,1) {1};
\node[vertex] (c) at  (2,2) {2};
\node[vertex] (d) at  (3,3) {3};
\node[above left] at (b.60) {{\color{blue} 1}};
\node[above left] at (c.60) {{\color{blue} 2}};
\node[above left] at (d.60) {{\color{blue} 0}};
\draw[edge] (b) to (c);
\draw[edge] (d) to (c);

\draw[edge] (a)  to[bend left] (b);
\draw[edge] (a)  to[bend right] (b);

\draw[edge] (a)  to[bend right] (c);
\draw[edge] (a)  to[bend right] (d);
\end{tikzpicture} \\\hline \mbox{$Q_{T_k}(t_1,t_2,t_3)$} & $\begin{array} {rll} r_1+r_2+r_3 & \leq & t_1+t_2+t_3 \\ r_2+r_3 & \leq & t_2+t_3 \\ r_3 & \leq & t_3 \end{array}$ & $\begin{array} {rll} r_1+r_2+r_3 & \leq & t_1+t_2+t_3 \\ r_2+r_3 & \leq & t_2+t_3 \\ r_3 & \geq & t_3 \end{array}$  \\\hline \mbox{$\Ref_{T_k}$} & $t_1t_2t_3$ & $t_1t_2^2$ \\\hline \mbox{$q_{T_k}(t)$} & $t_1t_2t_3+t_2^2t_3+t_3^3+t_1t_3^2+t_2t_3^2$ & $t_1t_2^2+t_2^3$ \\\hline \end{tabular}$$

$$\begin{tabular}{*{2}{|l|}*{2}{|l|}*{2}{|l|}*{2}{|l|}}\hline \mbox{k} & (1,-1,1) & (1,-1,-1) \\\hline\hline \mbox{$GBW_n(T_k)$} & \begin{tikzpicture}
\tikzset{vertex/.style = {shape=circle,draw,minimum size=1.5em}}
\tikzset{edge/.style = {->,> = latex'}}
\node[vertex] (a) at  (2,0) {0};
\node[vertex] (b) at  (1,1) {1};
\node[vertex] (c) at  (2,2) {2};
\node[vertex] (d) at  (3,3) {3};
\node[above left] at (b.60) {{\color{blue} 2}};
\node[above left] at (c.60) {{\color{blue} 0}};
\node[above left] at (d.60) {{\color{blue} 1}};
\draw[edge] (c) to (b);
\draw[edge] (c) to (d);

\draw[edge] (a)  to[bend left] (b);
\draw[edge] (a)  to[bend right] (b);

\draw[edge] (a)  to[bend right] (c);
\draw[edge] (a)  to[bend right] (d);
\end{tikzpicture} & \begin{tikzpicture}
\tikzset{vertex/.style = {shape=circle,draw,minimum size=1.5em}}
\tikzset{edge/.style = {->,> = latex'}}
\node[vertex] (a) at  (2,0) {0};
\node[vertex] (b) at  (1,1) {1};
\node[vertex] (c) at  (2,2) {2};
\node[vertex] (d) at  (3,3) {3};
\node[above left] at (b.60) {{\color{blue} 2}};
\node[above left] at (c.60) {{\color{blue} 1}};
\node[above left] at (d.60) {{\color{blue} 0}};
\draw[edge] (c) to (b);
\draw[edge] (d) to (c);

\draw[edge] (a)  to[bend left] (b);
\draw[edge] (a)  to[bend right] (b);

\draw[edge] (a)  to[bend right] (c);
\draw[edge] (a)  to[bend right] (d);
\end{tikzpicture} \\\hline \mbox{$Q_{T_k}(t_1,t_2,t_3)$} & $\begin{array} {rll} r_1+r_2+r_3 & \leq & t_1+t_2+t_3 \\ r_2+r_3 & \geq & t_2+t_3 \\ r_3 & \leq & t_3 \end{array}$ & $\begin{array} {rll} r_1+r_2+r_3 & \leq & t_1+t_2+t_3 \\ r_2+r_3 & \geq & t_2+t_3 \\ r_3 & \geq & t_3 \end{array}$ \\\hline \mbox{$\Ref_{T_k}$} & $t_1^2t_3$ & $t_1^2t_2$ \\\hline \mbox{$q_{T_k}(t)$} & $t_1^2t_3$ & $t_1^2t_2+t_1^3$ \\\hline \end{tabular}$$
\caption{$\mathfrak{Sim}_3$ with the line tree.}
\label{n3exampleline}
\end{center}
\end{figure}

\begin{figure}[h!]
\begin{center}
$$
\begin{tabular}{*{2}{|l|}*{2}{|l|}*{2}{|l|}*{2}{|l|}}\hline $k$ & (1,1,1) & (1,-1,1) \\\hline\hline \mbox{$GBW_n(T_k)$} & \begin{tikzpicture}
\tikzset{vertex/.style = {shape=circle,draw,minimum size=1.5em}}
\tikzset{edge/.style = {->,> = latex'}}
\node[vertex] (a) at  (0,-.3) {0};
\node[vertex] (b) at  (0,1) {1};
\node[vertex] (c) at  (-1,2) {2};
\node[vertex] (d) at  (1,2) {3};
\node[above left] at (b.60) {{\color{blue} 1}};
\node[above left] at (c.60) {{\color{blue} 1}};
\node[above left] at (d.60) {{\color{blue} 1}};
\draw[edge] (b) to (c);
\draw[edge] (b) to (d);

\draw[edge] (a)  to[bend left] (b);
\draw[edge] (a)  to[bend right] (b);

\draw[edge] (a)  to[bend left] (c);
\draw[edge] (a)  to[bend right] (d);
\end{tikzpicture} & \begin{tikzpicture}
\tikzset{vertex/.style = {shape=circle,draw,minimum size=1.5em}}
\tikzset{edge/.style = {->,> = latex'}}
\node[vertex] (a) at  (0,-.3) {0};
\node[vertex] (b) at  (0,1) {1};
\node[vertex] (c) at  (-1,2) {2};
\node[vertex] (d) at  (1,2) {3};
\node[above left] at (b.60) {{\color{blue} 2}};
\node[above left] at (c.60) {{\color{blue} 0}};
\node[above left] at (d.60) {{\color{blue} 1}};
\draw[edge] (c) to (b);
\draw[edge] (b) to (d);

\draw[edge] (a)  to[bend left] (b);
\draw[edge] (a)  to[bend right] (b);

\draw[edge] (a)  to[bend left] (c);
\draw[edge] (a)  to[bend right] (d);
\end{tikzpicture} \\\hline \mbox{$Q_{T_k}(t_1,t_2,t_3)$} & $\begin{array} {rll} r_1+r_2+r_3 & \leq & t_1+t_2+t_3 \\ r_2 & \leq & t_2 \\ r_3 & \leq & t_3 \end{array}$ & $\begin{array} {rll} r_1+r_2+r_3 & \leq & t_1+t_2+t_3 \\ r_2 & \geq & t_2 \\ r_3 & \leq & t_3 \end{array}$ \\\hline \mbox{$\Ref_{T_k}$} & $t_1t_2t_3$ & $t_1^2t_3$ \\\hline \mbox{$q_{T_k}(t)$} & $t_1t_2t_3+t_2^2t_3+t_2t_3^2$ & $t_1^2t_3+t_1t_3^2+t_3^2$ \\\hline \end{tabular}
$$

$$
\begin{tabular}{*{2}{|l|}*{2}{|l|}*{2}{|l|}*{2}{|l|}}\hline $k$ & (1,1,-1) & (1,-1,-1) \\\hline\hline \mbox{$GBW_n(T_k)$} & \begin{tikzpicture}
\tikzset{vertex/.style = {shape=circle,draw,minimum size=1.5em}}
\tikzset{edge/.style = {->,> = latex'}}
\node[vertex] (a) at  (0,-.3) {0};
\node[vertex] (b) at  (0,1) {1};
\node[vertex] (c) at  (-1,2) {2};
\node[vertex] (d) at  (1,2) {3};
\node[above left] at (b.60) {{\color{blue} 2}};
\node[above left] at (c.60) {{\color{blue} 1}};
\node[above left] at (d.60) {{\color{blue} 0}};
\draw[edge] (b) to (c);
\draw[edge] (d) to (b);

\draw[edge] (a)  to[bend left] (b);
\draw[edge] (a)  to[bend right] (b);

\draw[edge] (a)  to[bend left] (c);
\draw[edge] (a)  to[bend right] (d);
\end{tikzpicture} & \begin{tikzpicture}
\tikzset{vertex/.style = {shape=circle,draw,minimum size=1.5em}}
\tikzset{edge/.style = {->,> = latex'}}
\node[vertex] (a) at  (0,-.3) {0};
\node[vertex] (b) at  (0,1) {1};
\node[vertex] (c) at  (-1,2) {2};
\node[vertex] (d) at  (1,2) {3};
\node[above left] at (b.60) {{\color{blue} 3}};
\node[above left] at (c.60) {{\color{blue} 0}};
\node[above left] at (d.60) {{\color{blue} 0}};
\draw[edge] (c) to (b);
\draw[edge] (d) to (b);

\draw[edge] (a)  to[bend left] (b);
\draw[edge] (a)  to[bend right] (b);

\draw[edge] (a)  to[bend left] (c);
\draw[edge] (a)  to[bend right] (d);
\end{tikzpicture} \\\hline \mbox{$Q_{T_k}(t_1,t_2,t_3)$} & $\begin{array} {rll} r_1+r_2+r_3 & \leq & t_1+t_2+t_3 \\ r_2 & \leq & t_2 \\ r_3 & \geq & t_3 \end{array}$ & $\begin{array} {rll} r_1+r_2+r_3 & \leq & t_1+t_2+t_3 \\ r_2 & \geq & t_2 \\ r_3 & \geq & t_3 \end{array}$ \\\hline \mbox{$\Ref_{T_k}$} & $t_1^2t_2$ & $t_1^3$ \\\hline \mbox{$q_{T_k}(t)$} & $t_1^2t_2+t_1t_2^2+t_2^3$ & $t_1^3$ \\\hline \end{tabular}
$$
\caption{$\mathfrak{Sim}_3$ with the fork tree.}
\label{n3examplefork}
\end{center}
\end{figure}

\subsection{The Zonotopal Spaces of the Generalized Broken Wheel Graph} 

The weights of the vertices of $GBW_n(T_k)$ will guide us in constructing a polynomial $q_{T_k}(t)\in \mathbb{K}[t_1,...,t_n]$, where $\mathbb{K}$ is a field of characteristic $0$, which will turn out to be the volume of the polytope $Q_{T_k}(t_1,...,t_n)$. Each polynomial $q_{T_k}(t)$ has a distinguished monomial $$\Ref_{T_k}: t\mapsto t^{w_{T_k}}:= \prod_{i=1}^n t_i^{w_{T_k}(i)}, \mbox{ } w_{T_k}:=(w_{T_k}(1),...,w_{T_k}(n)),$$ called the \Dfn{reference monomial} of $T_k$. The polynomial $q_{T_k}(t)$ is constructed in the following way: the reference monomial $\Ref_{T_k}$ is a term of $q_{T_k}(t)$. To get the exponent vectors of the other terms of $q_{T_k}(t)$, let's think of the weight at each vertex $i$ of $T_k$ as a sandpile of $w_{T_k}(i)$ grains of sand. Each grain of sand can be moved to a sandpile at another vertex $j$ if there is an edge directed from $i$ towards $j$. 

More formally, a \Dfn{move} can be made from $i$ to $j$ if $w_{T_k}(i)>0$ and there exists an edge between $i$ and $j$ which is directed towards $j$. If a move is made from $i$ to $j$, then the weight at $i$ becomes $w_{T_k}(i)-1$ and the weight at $j$ becomes $w_{T_k}(j)+1$. We then have that $w\in \mathrm{supp}\mbox{ } q_{T_k}(t)$ if a series of moves can be made to get $w$ from $w_{T_k}$.

\begin{example}
Consider the top, leftmost graph in figure \ref{n3examplefork} with $k=(1,1,1)$. We know that $\Ref_{T_k}=t_1t_2t_3$ is a term of $q_{T_k}(t)$. Remembering that we always start at $w_{T_k}=(1,1,1)$, we can see that a move can be made from $1$ to $2$ to get $(0,2,1)$, giving us the term $t_2^2t_3$. We can also make a move from $1$ to $3$ to get $(0,1,2)$, giving us the term $t_2t_3^2$. As there are no other tuples which can be reached by a series of moves, we have that $\mathrm{supp}\mbox{ } q_{T_k}=\{(1,1,1),(0,1,2),(0,2,1)\}$ and $q_{T_k}(t)=t_1t_2t_3+t_2^2t_3+t_2t_3^2.$
\end{example}

We can construct zonotopal spaces from $GBW_n(T)$ in a similar fashion as we did for $BW_n$. For every edge $(i,j)$ of $GBW_n(T)$ we associate the vector $e_i-e_j$ if $(i,j)$ is directed towards $i$ and $e_j-e_i$ if $(i,j)$ is directed towards $j$. We take these vectors as columns of a matrix $GX_n$. From this matrix we can construct the central, internal, and external pairs of zonotopal spaces, as described in section \ref{introsonotopal}.

Let $\mathcal{P}_{n}(GX_n)$ be the space of all homogeneous polynomials of degree $n$ that lie in the $\mathcal{P}$-central space $\mathcal{P}(GX_n)$, and let $\mathcal{D}_{n}(GX_n)$ be the space of all homogeneous polynomials of degree $n$ that lie in the $\mathcal{D}$-central space $\mathcal{D}(GX_n)$. We will now show that the polynomials $q_{T_k}(t)$ form a basis for $\mathcal{D}_{n}(GX_n)$ and that the reference monomials $\Ref_{T_k}$ form a basis for $\mathcal{P}_{n}(GX_n)$.

\begin{theorem}
$\mathcal{P}_{n}(GX_n)$ is monomial and the monomials $\Ref_{T_k}$ for each $k$ together form a basis for $\mathcal{P}_{n}(GX_n)$.
\end{theorem}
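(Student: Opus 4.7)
The plan is to establish three facts: (a) each $\Ref_{T_k}$ lies in $\mathcal{P}(GX_n)$ and is homogeneous of degree $n$, (b) the $2^{n-1}$ monomials $\{\Ref_{T_k}\}$ are pairwise distinct, and (c) $\dim \mathcal{P}_n(GX_n)=2^{n-1}$. Together, (a) and (b) exhibit $2^{n-1}$ linearly independent monomials inside $\mathcal{P}_n(GX_n)$, and (c) upgrades this to a spanning set, simultaneously forcing the space to be monomial.

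For (a), summing indegrees over $\{1,\ldots,n\}$ counts every edge of $GBW_n(T)$ exactly once (every edge has its head in $\{1,\ldots,n\}$), so $\sum_i w_{T_k}(i) = 2n - n = n$. For membership, the power ideal cutting out $\mathcal{P}(GX_n)$ for a graphical matroid is generated by operators $p_{1_S}(D)^{|\partial S|}$ for non-empty $S\subseteq\{1,\ldots,n\}$, with $\partial S$ the edge-boundary in $GBW_n(T)$. Since $p_{1_S}(D)^{j}$ annihilates a monomial $m_s$ exactly when $\sum_{i\in S}s(i)<j$, the task reduces to verifying $\sum_{i\in S} w_{T_k}(i) < |\partial S|$ for every non-empty $S$. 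A direct bookkeeping gives
$$|\partial S| = |S| + \mathbf{1}_{1\in S} + |E(T)\cap\partial S|,$$
while decomposing the incoming degrees into spokes, internal tree-edges (of which there are $|S|-c(S)$, since $T[S]$ is a forest with $c(S)$ components), and tree-edges directed into $S$ gives
$$\sum_{i\in S} w_{T_k}(i) = |S| - c(S) + \mathbf{1}_{1\in S} + |\{T\text{-edges into }S\}|.$$
Subtracting, $|\partial S| - \sum_{i\in S} w_{T_k}(i) = c(S) + |\{T\text{-edges out of }S\}| \geq c(S) \geq 1$, which is the required strict inequality.

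For (b), induct on the distance from the root of $T$. At a leaf $j\neq 1$, $w_{T_k}(j) = \mathbf{1}_{k(j)=1} \in \{0,1\}$ recovers $k(j)$. At an internal vertex $j$, the identity $w_{T_k}(j) = \mathbf{1}_{k(j)=1} + |\{c\in\mathrm{children}(j) : k(c)=-1\}|$ determines $k(j)$ once $k$ is known on all descendants. Hence $k\mapsto \Ref_{T_k}$ is injective, giving $2^{n-1}$ distinct monomials.

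Part (c) is where I expect the main obstacle to lie. The standard Hilbert-series identity for zonotopal algebras gives $\dim \mathcal{P}_n(GX_n) = T_{GX_n}(0,1)$, which counts maximal bases of $GBW_n(T)$ under the chosen edge order. Unlike the broken-wheel case of Proposition \ref{p4}, where a linearly ordered product structure exhibits the $2^{n-1}$ max bases directly, for a general tree we need a recursive argument. A plausible route is deletion-contraction on leaves of $T$: each leaf $v$ contributes a dichotomy between its spoke and the unique $T$-edge at $v$, and reducing to the subtree $T-v$ peels off a factor of $2$. Vertex $1$ itself contributes no additional factor once its double edge to $0$ is accounted for, so we are left with exactly $2^{n-1}$ maximal bases, matching the count of reference monomials and closing the proof.
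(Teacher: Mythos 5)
Your parts (a) and (b) are correct and take a genuinely different, more elementary route than the paper, whose proof is a two-citation argument: it invokes Benson--Chakrabarty--Tetali (their Theorem~3.1) for the statement that $\{w_{T_k}\}$ is \emph{exactly} the set of maximal parking functions of $GBW_n(T)$, and then Postnikov--Shapiro for the fact that parking functions of a graph give a monomial basis of its $\mathcal{P}$-central space. Your (a) re-derives from the power-ideal description that each $w_{T_k}$ is a maximal $G$-parking function --- the edge-boundary bookkeeping giving $|\partial S| - \sum_{i\in S}w_{T_k}(i) = c(S) + |\{T\text{-edges out of }S\}| \geq 1$ is correct and is precisely the $G$-parking condition --- and your (b) correctly recovers $k$ from $w_{T_k}$ by working from the leaves up to the root. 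Together these give $2^{n-1}$ distinct monomials inside $\mathcal{P}_n(GX_n)$, i.e.\ the lower bound on the dimension.

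The gap is exactly where you locate it, in part (c), and the deletion--contraction sketch as written is not yet a proof: it asserts a ``dichotomy per leaf'' without tracking how the Tutte recursion interacts with the double edge at the root and the triangles $\{(0,v),(v,p),(p,0)\}$ created by the spokes. A clean way to finish in the same elementary spirit: the number of maximal bases is $T_{GBW_n(T)}(1,0)$ in the standard Tutte convention (the paper's $T_{X}(0,1)$), and by Greene--Zaslavsky this counts acyclic orientations of $GBW_n(T)$ with unique sink at the fixed vertex $0$. In any such orientation all $n+1$ edges incident to $0$ point into $0$; then no vertex $j\in\{1,\dots,n\}$ can be a sink, since its spoke leaves it, and the $n-1$ remaining tree edges may be oriented arbitrarily without creating a directed cycle, because every cycle of $GBW_n(T)$ passes through $0$ and $0$ has no outgoing edges. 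That gives exactly $2^{n-1}$, closing part (c). Alternatively, you could simply cite the ``exactly'' direction of Benson--Chakrabarty--Tetali, as the paper does, and dispense with the dimension count entirely.
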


\begin{proof}
Benson, Chakrabarty, and Tetali prove in Theorem 3.1 of \cite{2008arXiv0801.1114B} that the set of weights, $$\{w_{T_k} : k\in \{\pm 1\}^n, k(1)=1\},$$ is exactly the set of maximal parking functions of $GBW_n(T_k)$. It was then shown in \cite{2003math......1110P} that the set of parking functions of any graph $G$ is the support of a monomial basis of the $\mathcal{P}$-central space associated to $G$. Thus the set of reference monomials, $\{\Ref_{T_k} : k\in \{\pm 1\}^n, k(1)=1\},$ is exactly the degree $n$ basis elements of the $\mathcal{P}$-central space $\mathcal{P}(GX_n)$, which generate $\mathcal{P}_{n}(GX_n)$.
\end{proof}

\begin{theorem}
The polynomials $q_{T_k}(t)$ are contained in and form a basis for $\mathcal{D}_{n}(GX_n)$.
\end{theorem}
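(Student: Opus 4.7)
The plan is to invoke Proposition \ref{r4}. By the preceding theorem, $\mathcal{P}(GX_n)$ is monomial and its degree-$n$ component $\mathcal{P}_n(GX_n)$ has basis $\{\Ref_{T_k}\}_k$, where the reference monomials are precisely the monomials $m_{w_{T_k}}$ corresponding to the maximal parking functions of $GBW_n(T)$. By Proposition \ref{r4}, for each maximal parking function $w_{T_k}$ there is a unique $w_{T_k}$-monic polynomial in $\mathcal{D}(GX_n)$, and these $2^{n-1}$ monic polynomials form a basis of $\mathcal{D}_n(GX_n)$ (using that $\dim \mathcal{D}_n(GX_n) = h_n(n) = 2^{n-1}$ from Proposition \ref{r3}, together with the fact that monic polynomials are automatically homogeneous of degree $n$ when their monic parking function has $|w_{T_k}|=n$). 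Consequently, it suffices to prove, for each $k$, that $q_{T_k}$ is the $w_{T_k}$-monic polynomial in $\mathcal{D}(GX_n)$.

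This reduces the theorem to three verifications: (i) $q_{T_k}$ is homogeneous of degree $n$ and lies in $\mathcal{D}(GX_n)$; (ii) the reference monomial $\Ref_{T_k}$ appears in $q_{T_k}$ with coefficient $1$; (iii) no other reference monomial $\Ref_{T_{k'}}$ with $k'\neq k$ appears in the support of $q_{T_k}$. Point (ii) is immediate from the construction, since $w_{T_k}$ is the starting configuration of the sandpile process. Homogeneity in (i) follows because each sandpile move preserves $\sum_i w(i)$, and a quick count of indegrees in $GBW_n(T_k)$ gives $\sum_i w_{T_k}(i)=2n-n=n$. For (iii), the plan is to argue that the legal moves in $T_k$ transfer grains only along directed tree edges, so any weight vector $w$ reachable from $w_{T_k}$ inherits orientation constraints specific to $k$; showing that if $w_{T_{k'}}$ is reachable via such moves then $k'=k$ should proceed by induction on the tree, tracking for each vertex whether the incoming grains are consistent with the orientation $k'(i)$, and noting that flipping even a single edge orientation forces a move that either has no grains to transfer or would create negative weights.

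The main technical step is (i): showing $q_{T_k}\in\mathcal{D}(GX_n)$, i.e., that $q_{T_k}$ is annihilated by each cocircuit-induced differential operator $p_C(D)$ of the matroid of $GX_n$. The strategy mirrors the pair-cancellation argument in the proof of Theorem \ref{t5}. For each directed tree edge $i\to j$ in $T_k$, the edges connecting $0$ to $j$ and $i$ to $j$ (realized by the vectors $e_j$ and $e_j-e_i$ up to sign) pair to yield a cocircuit whose associated operator takes the form $D_j(D_j-D_i)$ (or $D_j(D_j+D_i)$ depending on orientation). The monomials in $q_{T_k}$ should then group into pairs whose exponent vectors differ by a single move $i\to j$; exactly as in Theorem \ref{t5} the two terms cancel under the operator, showing annihilation. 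The bookkeeping generalizes the broken-wheel argument via induction on the subtree structure of $T_k$: for a leaf $j$ of $T_k$, one verifies the move-pairing directly, then peels $j$ off and applies the inductive hypothesis to the contracted tree.

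The hardest part will be step (i): globally matching the cocircuit structure of $GBW_n(T_k)$ with the sandpile-move structure on $w_{T_k}$, especially when several tree edges share a vertex so that a single operator acts on monomials generated by chains of moves through different branches. Handling this requires the pairing of monomials to be set up via a canonical factorization of the move sequence relative to the chosen tree edge, and verifying that all contributions outside the pair either vanish or cancel in aggregate; this is the combinatorial crux around which the proof will turn, but once (i) is secured, (ii) and (iii) combine with Proposition \ref{r4} to yield the basis statement immediately.
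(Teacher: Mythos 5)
Your high-level reduction---invoke Proposition \ref{r4}, then establish that each $q_{T_k}$ is the $w_{T_k}$-monic polynomial via (i) membership in $\mathcal{D}(GX_n)$, (ii) coefficient $1$ on $\Ref_{T_k}$, (iii) absence of the other $\Ref_{T_{k'}}$---matches the logical skeleton the paper uses. The gap is in step (i), and it is a real one. You identify the cocircuits as pairs $\{0\text{-}j,\, i\text{-}j\}$ attached to single tree edges $i\to j$, giving operators of the form $D_j(D_j\pm D_i)$. But such a pair is a cocircuit of the graphic matroid of $GBW_n(T)$ only when $j$ is a non-root leaf: a cocircuit is a minimal edge cut, and for $GBW_n(T)$ these are indexed by \emph{connected subsets} $S\subseteq T$, with the cocircuit consisting of \emph{all} radial edges $\{0\text{-}v : v\in S\}$ together with \emph{all} tree edges crossing from $S$ to $T\setminus S$ (and the doubled radial edge when the root lies in $S$). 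So the operators you actually need to annihilate are products of $|S|$ derivatives $D_v$ and several more difference factors, not two-factor operators. The pair-cancellation template you borrow from Theorem \ref{t5} was for the \emph{internal} space $\mathcal{D}_-(X_{n+1})$, whose cocircuits are the much smaller sets $\{x_{2i},x_{2i+1}\}$ and $\{x_{2n}\}$; that simplicity is what made the pairing possible, and it does not carry over to the central cocircuits here. Your leaf-peeling induction also does not obviously reduce a cocircuit condition for a large connected $S$ to cocircuit conditions on a contracted tree, and you acknowledge the combinatorial crux (several tree edges sharing a vertex) without resolving it.

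The paper's argument instead works with an arbitrary cocircuit $C$ for connected $\{v_1,\dots,v_s\}$ directly. It observes that the factor $D_{v_1}\cdots D_{v_s}$ coming from the radial edges already appears in $D_C$, and then leverages the sandpile-degree structure of $q_{T_k}$ together with the remaining boundary-edge factors of $D_C$ to force annihilation---essentially a degree/support argument, not a cancellation argument. If you want to salvage your route, you would need either to replace the tree-edge ``cocircuits'' with the genuine cut-set cocircuits and prove annihilation for each, or prove separately that annihilation by the leaf and sub-tree operators you do handle implies annihilation by all of $\mathcal{J}(GX_n)$; neither step is supplied, so as written (i) does not go through.
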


\begin{proof}
A polynomial is contained in $\mathcal{D}_{n}(GX_n)$ if it is homogeneous of degree $n$ and annihilated by all the operators defined by the cocircuits of $GBW_n(T_k)$. Let's consider any cocircuit $C$ of $GBW_n(T_k)$. We know that $C$ is defined by a cycle in the dual graph of $GBW_n(T_k)$; let the set $\{{v_1},...,{v_s}\}$ be the set of vertices which are dual to $C$. The operator $D_C$ defined by $C$ is the product of operators of the form $(D_x-D_y)$ where $(x,y)$ is an edge in $GBW_n(T_k)$ dual to an edge of $C$. We can see that the operator $D_{v_1}\cdots D_{v_s}$ is a factor of $D_C$, as all edges $(0,v_i), 1\leq i\leq s$, are dual to an edge of $C$.

If $D_{v_1}\cdots D_{v_s}$ does not annihilate $q_{T_k}(t)$, then there must exist a vertex $v_i$ in $\{v_1,...,v_s\}$ such that all edges from $v_i$ to a vertex in $\{v_1,...,v_s\}\backslash \{v_i\}$ flow out of $v_i$, and such that all edges from $v_i$ to a vertex in $\{v_{s+1},...,v_n\}$ flow into $v_i$. The product of all operators $D_{v_j}$ such that $v_j$ is adjacent to $v_i$ and $v_j\in \{v_{s+1},...,v_n\}$ is a factor of $D_C$ and annihilates $q_{T_k}(t)$ together with $D_{v_1}\cdots D_{v_s}$, giving us that $q_{T_k}(t)\in \mathcal{D}_{n}(GX_n)$. The polynomials $q_{T_k}(t)$ are then the unique $s$-monic polynomials, where $s$ is the support of some reference monomial, which form a basis for $\mathcal{D}_{n}(GX_n)$ by proposition \ref{r4}.
\end{proof}

\begin{theorem}
The volume of $Q_{T_k}(t_1,...,t_n)$ is $q_{T_k}(t)$.
\end{theorem}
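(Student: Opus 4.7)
The plan is to proceed by induction on $n$, the number of vertices of $T$. The base case $n=1$ is immediate: the tree has a single root with $k(1)=1$, so $Q_{T_k}=[0,t_1]$ and $\operatorname{vol}(Q_{T_k})=t_1=q_{T_k}(t)$. For the inductive step, pick a leaf $\ell$ of $T$ with parent $p$, let $T'=T\setminus\{\ell\}$ carry the restricted orientation $k'$, and split on the sign of $k(\ell)$.

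In Case A ($k(\ell)=1$), the $\ell$-inequality is $0\le r_\ell\le t_\ell$, and for each fixed $r_\ell$ the remaining inequalities coincide with those defining $Q_{T'_{k'}}(\tilde t)$, where $\tilde t_p=t_p+t_\ell-r_\ell$ and $\tilde t_i=t_i$ otherwise; indeed both systems differ only in that $r_\ell$ has been absorbed into $t_p$ on the right-hand side of the constraints attached to $p$ and its ancestors. Fubini and the inductive hypothesis, together with the substitution $u=t_p+t_\ell-r_\ell$, therefore yield
\[
V_{T_k}(t) \;=\; \int_{t_p}^{t_p+t_\ell} q_{T'_{k'}}(\ldots,u,\ldots)\,du.
\]
Expanding $q_{T'_{k'}}$ in its normalized monomial form and evaluating $\int u^{s_p}/s_p!\,du=((t_p+t_\ell)^{s_p+1}-t_p^{s_p+1})/(s_p+1)!$ expresses the right-hand side as a sum, over $s\in\mathrm{supp}\,q_{T'_{k'}}$ and $0\le j\le s_p$, of monomials $m_{s'}/s'!$ where $s'_i=s_i$ for $i\ne p,\ell$, $s'_p=j$, and $s'_\ell=s_p+1-j$. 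To finish Case A it remains to verify that $(s,j)\mapsto s'$ is a bijection onto $\mathrm{supp}\,q_{T_k}$; this is done by the move-reordering argument: any legal sequence of moves in $GBW_n(T_k)$ from $w_{T_k}$ can be reordered so that all $p\to\ell$ moves appear at the end, the prefix becoming a legal move sequence in $GBW_{n-1}(T'_{k'})$ from $w_{T'_{k'}}$, and the number of terminal $p\to\ell$ moves equals $s'_\ell-1$, which determines $j=s'_p$ uniquely.

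In Case B ($k(\ell)=-1$), the substitution $r'_\ell=r_\ell-t_\ell$ trivializes the $\ell$-constraint, and $\hat r_p:=r_p+r'_\ell$ absorbs the $r'_\ell$-dependence from the constraints at $p$ and its ancestors, producing the description $\{(\hat r,r'_\ell):\hat r\in Q_{T'_{k'}}(t'),\ 0\le r'_\ell\le \hat r_p\}$, so that $V_{T_k}(t)=\int_{Q_{T'_{k'}}(t')}\hat r_p\,d\hat r$, the first moment of $Q_{T'_{k'}}$ in the coordinate corresponding to $p$. The main obstacle is that this first moment is not captured by the bare-volume inductive hypothesis: the layer-cake identity $\int\hat r_p\,d\hat r=\int_0^\infty\operatorname{vol}(Q_{T'_{k'}}(t')\cap\{\hat r_p\ge y\})\,dy$ identifies the sublevel set with $Q_{T'_{k'}}(\ldots,t_p-y,\ldots)$ only while $y\le t_p$, whereas $\hat r_p$ may in general range up to a value larger than $t_p$, at which point new boundary facets appear and the piecewise-polynomial nature of $\operatorname{vol}$ kicks in. The natural remedy is to strengthen the induction to the moment identity
\[
\int_{Q_{T_k}(t)} \frac{r^\alpha}{\alpha!}\,dr \;=\; \sum_{s\in\mathrm{supp}(q_{T_k},\,w_{T_k}+\alpha)} \frac{m_s}{s!}
\]
indexed by $\alpha\in\mathbb N^V$, where $\mathrm{supp}(q_{T_k},w_{T_k}+\alpha)$ is the set of states reachable from $w_{T_k}+\alpha$ by the moves of $GBW_n(T_k)$; Case A propagates the strengthened hypothesis by the same reordering argument (now applied to sequences enlarged by $\alpha$), and Case B for $\alpha=0$ in $T_k$ reduces to the strengthened hypothesis with $\alpha=e_p$ in $T'_{k'}$. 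An alternative that bypasses the moment calculation altogether is to realize $Q_{T_k}(t)$ as the transportation polytope $\{a\ge0:GX_n a=t\}$ by introducing slack variables exactly as in Section~\ref{SPpolytope}, invoke box-spline theory to place $V_{T_k}$ in $\mathcal D_n(GX_n)$, and then use the dual bases $\{q_{T_{k'}}\}\subset\mathcal D_n(GX_n)$ and $\{\Ref_{T_{k'}}\}\subset\mathcal P_n(GX_n)$ established in the two theorems just proved to reduce the statement to the single coefficient identity $[\Ref_{T_{k'}}]\,V_{T_k}=\delta_{k,k'}$ after the natural normalization, i.e.\ to showing that $\Ref_{T_k}$ is the only reference monomial in the monomial expansion of $V_{T_k}$.
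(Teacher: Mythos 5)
Your proposal takes a genuinely different route from the paper. The paper's argument goes through truncated-power (box-spline) theory: it observes that the truncated power of $GX_n$ (oriented according to $k$) is a single polynomial piece in $\mathcal D_n(GX_n)$ over the positive octant, and then pins down which element of the basis $\{q_{T_{k'}}\}$ it must be by a combination of boundary-facet divisibility constraints and the derivative relations $(D_l-D_p)\,\mathrm{vol}(Q_{T_k}) = \mathrm{vol}(Q_{T'_{k'}})$ for a leaf $l$ with parent $p$, run as an induction on $n$. Your Case A, by contrast, is elementary and self-contained: Fubini, the substitution $u=t_p+t_\ell-r_\ell$, the Beta-type integral, and the move-reordering bijection give an exact match of normalized monomials with $\operatorname{supp}q_{T_k}$ with no appeal to $\mathcal D$-spaces at all. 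This is cleaner and more concrete than the corresponding step in the paper, and it proves the coefficient normalization along the way, something the paper leaves rather implicit. Your alternative route for Case B (realizing $Q_{T_k}$ via slack variables, invoking the truncated power to place $V_{T_k}$ in $\mathcal D_n(GX_n)$, and reducing to the pairing identity $\langle\Ref_{T_{k'}},V_{T_k}\rangle=\delta_{k,k'}$) is essentially the paper's approach, compressed; the remaining step there is exactly what the paper proves via divisibility by $t_i^{w_i}$ on boundary facets.

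There is, however, a genuine gap in your primary route. You strengthen the induction to the moment identity
\[
\int_{Q_{T_k}(t)} \frac{r^\alpha}{\alpha!}\,dr \;=\; \sum_{s\in\operatorname{supp}(q_{T_k},\,w_{T_k}+\alpha)} \frac{m_s}{s!}\,,
\]
and you explain how Case A propagates it for arbitrary $\alpha$, but for Case B you only assert the reduction at $\alpha=0$ (to $\alpha=e_p$ on $T'_{k'}$). That is not enough for the induction on $n$ to close: once you have applied Case B once, subsequent leaf removals on $T'$ may again fall into Case B, and then the hypothesis is needed at $\alpha$-values such as $e_p+e_{p'}$ or $2e_p$. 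You therefore also need to verify that Case B propagates the strengthened hypothesis for arbitrary $\alpha$. Concretely, writing $r_\ell=r'_\ell+t_\ell$ and $r_p=\hat r_p-r'_\ell$, the inner integral
\[
\int_0^{\hat r_p}\frac{(r'_\ell+t_\ell)^{\alpha_\ell}}{\alpha_\ell!}\,\frac{(\hat r_p-r'_\ell)^{\alpha_p}}{\alpha_p!}\,dr'_\ell
\;=\;\sum_{a=0}^{\alpha_\ell}\frac{t_\ell^a}{a!}\,\frac{\hat r_p^{\,\alpha_\ell+\alpha_p-a+1}}{(\alpha_\ell+\alpha_p-a+1)!}
\]
reduces the $T_k$-moment at $\alpha$ to a sum of $T'_{k'}$-moments at the exponents $\beta^{(a)}$ with $\beta^{(a)}_p=\alpha_\ell+\alpha_p-a+1$, and the matching combinatorics is the complementary reordering argument (commute all $\ell\to p$ moves to the front, noting $w_{T_k}(p)=w_{T'_{k'}}(p)+1$); this does work, but you must say it, otherwise the induction as stated does not close. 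Once that is added, your primary route is a complete and arguably tidier proof than the one in the paper.
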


\begin{proof}
The truncated power $Trn_X(t)$ is a function which records the normalized volume of $Q_{T_k}(t_1,...,t_n)$. As defined in \cite{de1993box}, it can specifically be identified as the function $$Trn_X(t):=vol_{n-d}(X^{-1}\{t\}\cap \mathbb{R}_+^n)dt/ |\det X|, \text{ } t\in \ran X,$$ where $\ran X$ is the range of $X$, $d$ is the dimension of $\ran X$, and $X$ is any matrix in which $0$ is an extreme point for the non-negative polytope $M_X$ whose closed support is given by $$\mathrm{supp}\mbox{ } M_X= \{Xa : 0\leq a\leq 1\}.$$

It is piecewise in the $\mathcal{D}$-central space of $GBW_n(T)$, which is spanned by the $2^{n-1}$ polynomials $q_{T_k}(t)$. Since no edge of $GBW_n(T_k)$ ever lies in the interior of the positive octant for any $k$, the volume is one polynomial piece in the positive octant.\

The positive octant has $n$ facets. At least one of these facets is a part of the boundary of the support of the truncated power. The facets on the boundary depend on the $k$ we choose. The volume polynomial $Q_{T_k}(t_1,...,t_n)$ is thus divisible by $t_i^{w_i}$ whenever $t_i=0$ is a boundary facet and $w_i+1$ edges do not lie in the $t_i=0$ facet. In our case, $i$ will be a vertex which is a sink and $w_i$ its corresponding weight.

For $n=2$, there is one tree $T$ with two possible orientations: $k_1=(1,1)$ and $k_2=(1,-1)$. We then know that the polynomials $q_{T_{k_1}}=t_1t_2+t_2^2$ and $q_{T_{k_2}}=t_1^2$ form a basis for $\mathcal{D}(GX_2)$; so the volumes of $Q_{T_{k_1}}(t_1,...,t_n)$ and $Q_{T_{k_2}}(t_1,...,t_n)$ must be linear combinations of $q_{T_{k_1}}=t_1t_2+t^2$ and $q_{T_{k_2}}=t_1^2$, respectively. As these polynomials are divisible by $t_2$ and $t_1^2$, respectively, we can see from our observations about the truncated power that the volume of $Q_{T_{k_1}}(t_1,...,t_n)$ must be $q_{T_{k_1}}=t_1t_2+t_2^2$ and the volume of $Q_{T_{k_2}}(t_1,...,t_n)$ must be $q_{T_{k_2}}=t_1^2$.

Let's assume that the volume of $Q_{T_k}(t_1,...,t_n)$ is $q_{T_k}(t)$ $n>2$ for any $k$, and consider any tree $T$ with $n+1$ vertices, a $k$, and $GBW_{n+1}(T_k)$. We would like to find the volume of $Q_{T_k}(t_1,...,t_n,t_{n+1})$. We can pick a leaf $l$ of $GBW_{n+1}(T_k)$ with parent $p$, and consider the polytope $Q_{T_{k}}(t_1,...,t_{l-1},t_{l+1},...,t_{n+1})$ corresponding to the directed graph resulting from removing the edge between $l$ and $p$ and the edge between $l$ and $0$. We have two cases to consider: the case where the edge connecting $l$ and $p$ is oriented from $p$ to $l$, and the case where the edge connecting $l$ and $p$ is oriented from $l$ to $p$. For each case respectively, we have that:
\begin{enumerate}
\item If the edge connecting $l$ and $p$ is oriented from $p$ to $l$, then $$(D_l-D_p)vol(Q_{T_k}(t_1,...,t_n,t_{n+1}))=vol(Q_{T_{k}}(t_1,...,t_{l-1},t_{l+1},...,t_{n+1})).$$
\item If the edge connecting $l$ and $p$ is oriented from $l$ to $p$, then $$(D_p-D_l)vol(Q_{T_k}(t_1,...,t_n,t_{n+1}))=vol(Q_{T_{k}}(t_1,...,t_{l-1},t_{l+1},...,t_{n+1})).$$
\end{enumerate}

Let us first begin with the case where the edge connecting $l$ and $p$ is oriented from $p$ to $l$, as it is the quickest. In this case, as $vol(Q_{T_k}(t_1,...,t_n))$ has all positive coefficients, we know that $$vol(Q_{T_k}(t_1,...,t_n,t_{n+1}))=t_l\cdot vol(Q_{T_{k}}(t_1,...,t_{l-1},t_{l+1},...,t_{n+1})) + t_lD_p\cdot vol(Q_{T_k}(t_1,...,t_n,t_{n+1})).$$ Graphically this is the same as adding $1$ to the weight of $l$, and then adding a monomial for each time you make a move from $p$ to $l$. Or in other words, $vol(Q_{T_k}(t_1,...,t_n,t_{n+1}))=q_{T_k}(t)$.

The second case, where the edge connecting $l$ and $p$ is oriented from $l$ to $p$, is a bit more subtle. This is because we need to consider whether or not $p$ is a sink. If $p$ is a sink, then there is a $t_p$ in every monomial of $q_{T_k}(t)$ and never a $t_l$. And so we can see that $vol(Q_{T_k}(t_1,...,t_n,t_{n+1}))=t_pQ_{T_{k}}(t_1,...,t_{l-1},t_{l+1},...,t_{n+1})$, which is the same as adding $1$ to the weight of $p$, showing us that $vol(Q_{T_k}(t_1,...,t_n,t_{n+1}))=q_{T_k}(t)$.

When $p$ is not a sink, we have to be careful because it is difficult to recover what we have lost after applying $(D_p-D_l)$ to $vol(Q_{T_k}(t_1,...,t_n))$ from $vol(Q_{T_{k}}(t_1,...,t_{l-1},t_{l+1},...,t_{n+1}))$, as we can no longer keep track of what moves out of $p$. So let's assume that $vol(Q_{T_k}(t_1,...,t_n))= q_{T_k}(t)+ q_{T_{k'}}(t)$, where $k\neq k'$ and $q_{T_{k'}}(t)$ is divisible by a sink of $T_k$ raised to the power of its weight. When applying $(D_p-D_l)$ to $vol(Q_{T_k}(t_1,...,t_n))$, we can see that $(D_p-D_l)q_{T_k}(t)=D_pq_{T_k}(t)=vol(Q_{T_{k}}(t_1,...,t_{l-1},t_{l+1},...,t_{n+1}))$, as there are no terms with $t_l$ in $q_{T_k}(t)$. 

We must then have that $(D_p-D_l)q_{T_{k'}}(t)=0$. This means that either there are is no $t_p$ or $t_l$ as a factor of any term in $q_{T_{k'}}(t)$, which is not possible as the edge connecting $p$ and $l$ must be oriented towards either $p$ or $l$, or there exists a pair of terms, $t_p\alpha$ and $t_l\alpha$, of $q_{T_{k'}}(t)$, where $\alpha$ is a monomial in $\mathbb{K}[t_1,...,t_n]$. This can only be the case if the edge connecting $p$ and $l$ is oriented towards $l$ in $T_{k'}$, as that is the only way for there to even exist a term with a factor of $t_l$ to begin with. This means, however, that $(D_p-D_l)q_{T_{k'}}(t)=-vol(Q_{T_{k'}}(t_1,...,t_{l-1},t_{l+1},...,t_{n+1}))$ by the first case we considered in this proof. As $-vol(Q_{T_{k'}}(t_1,...,t_{l-1},t_{l+1},...,t_{n+1}))$ is non-zero, this contradicts the fact that $(D_p-D_l)vol(Q_{T_k}(t_1,...,t_n,t_{n+1}))=vol(Q_{T_{k}}(t_1,...,t_{l-1},t_{l+1},...,t_{n+1}))$. We must then have that $vol(Q_{T_k}(t_1,...,t_n,t_{n+1}))=q_{T_k}(t)$, as desired.
\end{proof}

With these results we can see that the zonotopal algebra derived from a given rooted tree $T$ completely describes a polyhedral subdivision of $\mathfrak{Sim}_n(t_1,...,t_n)$. Given how the zonotopal spaces in our study seem to capture the volumes of the polytopes and the polytopes appearing in their various subdivisions, it seems fair to suggest that the volumes of polytopes in general could be studied via their corresponding zonotopal spaces. Given a polytope, one would need to ask what the appropriate graphical matroid would be to derive the zonotopal spaces which capture its volume, and then analyze which polyhedral  subdivisions come out of these spaces. This method could be a new and interesting approach towards studying volumes of polytopes.

\nocite{*}
\bibliographystyle{alpha}
\bibliography{Brodsky}

\end{document}